\pgfplotsset{compat=1.18}
\definecolor{darkblue}{RGB}{0, 0, 166} 
\definecolor{green}{RGB}{0,102,0} 
\newcommand{\RR}{\mathbb{R}}      
\newcommand{\ZZ}{\mathbb{Z}}      
\newcommand{\QQ}{\mathbb{Q}}
\DeclareMathOperator{\Aut}{Aut}
\theoremstyle{definition}
\newtheorem{definition}[thm]{Definition}
\newtheorem{remark}[thm]{Remark}
\newtheorem{question}[thm]{Question}
\title[Chain link surgeries and $\chi$-slice 3-braid closures]{On chain link surgeries bounding rational homology balls and $\chi$-slice 3-braid closures}
\author{Vitalijs Brejevs and Jonathan Simone}
\begin{document}

    \begin{abstract}
        We determine which integral surgeries on a large class of circular chain links bound rational homology balls. Our key tool is the lattice-theoretic cubiquity obstruction recently developed by Greene and Owens in \cite{greene-owens}. We discuss a practical method of computing it, and, as an application, prove that a generalisation of the slice--ribbon conjecture holds for all but one infinite family of quasi-alternating 3-braid links, which extends previous results of Lisca concerning the conjecture for 3-braid knots.
        \end{abstract}
\maketitle

\section{Introduction}
\label{sec:intro}

The question of which rational homology 3-spheres ($\Q S^3$s) bound rational homology 4-balls ($\Q B^4$s) is a well-known problem in low-dimensional topology \cite[Problem~4.5]{kirbylist}. A rich source of $\Q S^3$s is the double branched cover construction: if $K \subset S^3$ is a knot, then the double cover of $S^3$ branched along $K$, denoted $\Sigma_2(K)$, is a $\Q S^3$. Moreover, if $K$ is \emph{slice}, i.e., if $K$ bounds a properly smoothly embedded disc $D \subset B^4$, then $\Sigma_2(D)$, the double cover of $B^4$ branched along $D$, is a $\Q B^4$ bounded by $\Sigma_2(K)$. This statement generalises to links in the following way. Say that $S$ is a \emph{slice surface} for a link $L \subset S^3$ if $S$ is properly smoothly embedded in $B^4$, has no closed components, and $\pp S = L $; we do not require that $S$ be connected or orientable. Then we call $L$ a \emph{$\chi$-slice} link if $L$ admits a slice surface $S$ of Euler characteristic one. Donald and Owens have shown in~\cite{DonaldOwens2012} that if $L$ is $\chi$-slice and has non-zero determinant, then $\Sigma_2(S)$ is a $\Q B^4$ bounded by $\Sigma_2(L)$.

The present article explores the family of $\Q S^3$s that arise as double branched covers of 3-braid closures. We first describe the $\Q S^3$s in question as surgeries along \textit{chain links}, and then consider the $\chi$-sliceness of the underlying 3-braid links.

\subsection{Surgeries on twisted chain links} 
Consider the 3-manifolds given by the surgery diagram in Figure~\ref{fig:P-intro}. Such surgeries were studied at length by the second author in~\cite{simone2020classification} whence we recall some terminology and notation. Call the underlying $n$-component link a \textit{$t$-half twisted chain link} and denote it by $L_n^t$. Writing $\textbf{x}=(x_1,\ldots,x_n)$, where $x_i \in \ZZ$ for all $i$, we denote the corresponding surgery 3-manifold by $S^3_{\textbf{x}}(L_n^t)$. By $-\textbf{x}$ we mean the string $(-x_1,\ldots,-x_n)$. Note that if $\textbf{x}'$ is any cyclic reordering and/or reversal of $\textbf{x}$, then $S^3_{\textbf{x}}(L_n^t)$ and $S^3_{\textbf{x}'}(L_n^t)$ are diffeomorphic. In Section~\ref{sec:dbcs} we will show that any integral chain link surgery is diffeomorphic to a chain link surgery in one of three standard forms:

\begin{figure}[h]
\centering
\begin{overpic}
[scale=.6]{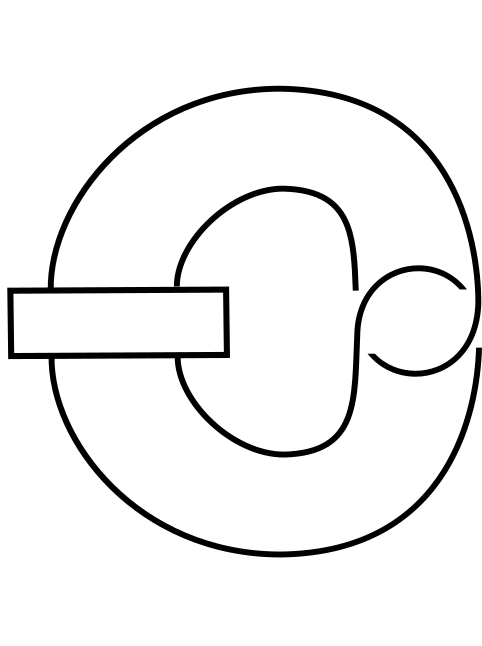}
\put(0,75){$x_1$}
\put(16,47){$t$}
\put(27,-14){$n=1$}
\end{overpic}\hspace{1in}
\begin{overpic}
[scale=.6]{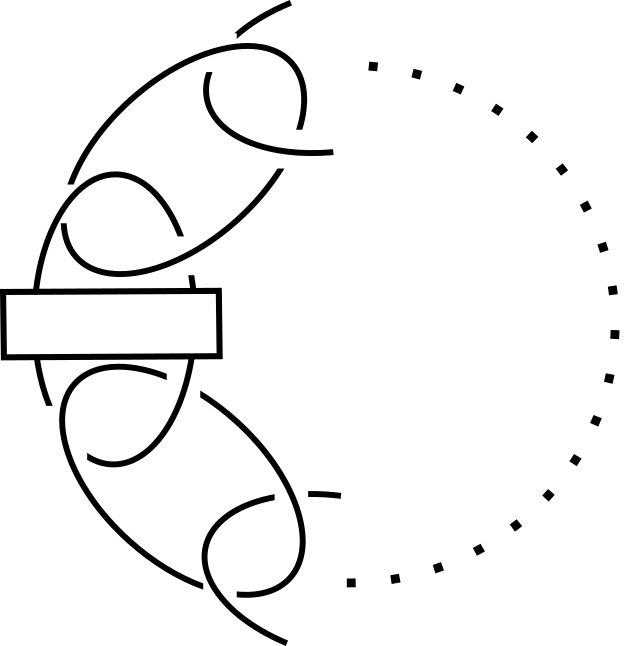}
\put(-6,60){$x_1$}
\put(5,85){$x_2$}
\put(5,10){$x_m$}
\put(16,46.5){$t$}
\put(37,-15){$n\ge2$}
\end{overpic}\vspace{.4cm}
\caption{Integral surgery along an $n$-component $t$-half twisted chain link $L_n^t$, which we denote by $S^3_{\textbf{x}}(L_n^t)$, where $\textbf{x}=(x_1,\ldots,x_n)$ and $x_i\in\mathbb{Z}$ for all $i$. The box labeled $t$ indicates the number of half-twists.}\label{fig:P-intro}
\end{figure}

\begin{prop} Let $\textbf{x}=(x_1,\ldots,x_{m})$. 
Then $S^3_{\textbf{x}}(L_m^s)$ 
is diffeomorphic to some $S^3_{\textbf{a}}(L_n^t)$, where $\textbf{a}=(a_1,\ldots,a_n)$ and either:
\begin{enumerate}[label=(\roman*)]
    \item $n=1$, $\textbf{a}=(a)$, and 
        \begin{enumerate}
            \item $a\in\{-1,-2,-3\}$ if $t$ is odd, or
            \item $a\in\{1,2,3\}$ if $t$ is even;
        \end{enumerate}
    \item $n=2$, $a_1=0$, and $a_2\in\mathbb{Z}$; or 
    \item 
    \begin{enumerate}
        \item $n=1$, $\textbf{a}=(a)$, and either $t$ is even and $a\le -1$, or $t$ is odd and $a\le -5$, or
        \item $n\ge 2$, $a_i\le -2$ for all $i$, and there exists $j$ such that $a_j\le -3$.
    \end{enumerate}
\end{enumerate}
\label{prop:surgerydiagram}
\end{prop}

It is shown in \cite{simone2020classification} that if $S^3_{\textbf{a}}(L_n^t)$ is of type (iii), then it is a $\Q S^3$. It is now easy to see from the surgery diagrams that $S^3_{\textbf{a}}(L_n^t)$ is not a $\Q S^3$ if and only if it is of type (ii) with $t$ even. 
The following result, to be proven in Section~\ref{sec:dbcs}, almost completely describes the $\QQ S^3$s of the first two types that bound $\QQ B^4$s, with the exception of an infinite family of Brieskorn spheres (see Remark~\ref{rem:brieskorn}).

\begin{prop}\hfill
\begin{enumerate}
\item Let $S^3_{(a)}(L_1^t)$ be of type (i) and suppose it is not the case that $a=1$ and $t\ge10$ is even, or $a=-1$ and $t\le-11$ is odd. Then $S^3_{(a)}(L_1^t)$ bounds a $\Q B^4$ if and only if $(t,a)\in\{(2n,1), (-2n-1,-1) \mid 0\le n\le 4\}$.
\item If $S^3_{\textbf{a}}(L_2^t)$ is of type (ii), then it bounds a $\QQ B^4$ if and only if $t$ is odd.
\end{enumerate}
\label{prop:parabolic}
\end{prop}

\begin{rem} Proposition \ref{prop:parabolic} provides a full classification of which integral surgeries on chain links belonging to families (i) and (ii) bound $\Q B^4s$ except for those in type~(i) with $a=1$ and $t\ge10$ even, and $a=-1$ and $t\le -11$ odd. These are precisely the Brieskorn spheres $\Sigma(2,3,6n+1)$, where $n\ge 5$ (cf.~the proof of Proposition \ref{prop:parabolic}). This family has been studied for decades, but it is still unknown for which values of $n\ge5$ the manifold $\Sigma(2,3,6n+1)$ bounds a $\Q B^4$. 
\label{rem:brieskorn}
\end{rem}

We now assume that $S^3_{\textbf{a}}(L_n^t)$ is of type (iii). In~\cite{simone2020classification}, this manifold is given simpler notation that unifies the $n=1$ and $n\ge2$ cases (cf.~Lemma \ref{lem:dbc}(3)). We adopt this notation here: 
\[ Y_\textbf{a}^t=\begin{cases} 
      S^3_{-\textbf{a}}(L_n^t) & \text{ if } n\ge2, \\
      S^3_{(-a_1+2)}(L_1^t) & \text{ if $n=1$ and $t$ is even,} \\ 
        S^3_{(-a_1-2)}(L_1^t) & \text{ if $n=1$ and $t$ is odd.} \\ 
   \end{cases}
\]

For $t\in\{-1,0,1\}$, the article~\cite{simone2020classification} provides an almost complete understanding of which strings $\textbf{a}$ yield $Y^{t}_{\textbf{a}}$ that bound $\Q B^4$s. This depends on whether $\textbf{a}$ belongs to particular explicitly defined sets, denoted by $\mathcal{S}_{kx}, \mathcal{S}_{kx}^*$, and $\mathcal{O}$ for $k \in \{ 1, 2 \}$ and $x \in \{ a, b, c, d, e\}$, with $\mathcal{S}_k = \bigcup_{x \in \{a,b,c,d,e\}} \mathcal{S}_{kx}$ and $\mathcal{S}_k^* = \bigcup_{x \in \{a,b,c,d,e\}} \mathcal{S}_{kx}^*$. We defer the precise definitions of these sets to Section~\ref{sec:notation}.

\begin{thm}[Theorem 1.7 in \cite{simone2020classification}] Let $\textbf{a}=(a_1,\ldots,a_n)$, where $a_i\ge2$ for all $i$ and $a_j\ge 3$ for some $j$.
\begin{enumerate} 
    \item $Y^0_{\textbf{a}}$ bounds a $\Q B^4$ if and only if $\textbf{a}\in\mathcal{S}_2\cup\mathcal{S}_2^*$.\label{(1)}
    \item If $\textbf{a}\not\in \mathcal{S}^*_{1a}\cup \mathcal{O}$, then $Y^{-1}_{\textbf{a}}$ bounds a $\Q B^4$ if and only if $\textbf{a}\in \mathcal{S}_1\cup (\mathcal{S}^*_1\setminus \mathcal{S}_{1a}^*)$.\label{(2)}
	\item If $\textbf{a}\not\in \mathcal{S}_{1a}\cup \mathcal{O}$, then $Y^{1}_{\textbf{a}}$ bounds a $\Q B^4$ if and only if $\textbf{a}\in (\mathcal{S}_1\setminus \mathcal{S}_{1a})\cup \mathcal{S}_1^*$.\label{(3)}
\end{enumerate}
\label{thm:s}\end{thm}

Notice that Theorem \ref{thm:s}(\ref{(1)}) provides a full classification of rational homology spheres of the form $Y^0_{\textbf{a}}$ that bound rational homology balls. One aim of this paper is to upgrade Theorem \ref{thm:s}(\ref{(2)}) and (\ref{(3)}) to obtain a (nearly) complete classification of rational homology spheres of the form $Y^{\pm1}_{\textbf{a}}$ that bound rational homology balls. In particular, we show the following. 

\begin{thm} 
Let $\mathbf{3}_6 = (3,3,3,3,3,3) \in \mathcal{O}$ and suppose $\textbf{a}\neq \mathbf{3}_6$.
\begin{enumerate}
\item $Y^{-1}_{\textbf{a}}$ bounds a $\Q B^4$ if and only if $\textbf{a}\in \mathcal{S}_1\cup (\mathcal{S}_{1}^*\setminus\mathcal{S}_{1a}^*)$.
\item $Y^{1}_{\textbf{a}}$ bounds a $\Q B^4$ if and only if $\textbf{a}\in (\mathcal{S}_1\setminus\mathcal{S}_{1a})\cup \mathcal{S}_{1}^*$.
\end{enumerate}
\label{thm:1}
\end{thm}

The proof of Theorem~\ref{thm:1} relies on an obstruction due to Greene and Jabuka~\cite{greene-pretzel} and developed by Greene and Owens~\cite{greene-owens}, called \textit{cubiquity}. It states that if a $\Q S^3$ bounds a $\Q B^4$ as well as a \textit{sharp} negative definite 4-manifold $X$, then the image of the embedding of the intersection lattice of $X$ into the standard integral lattice $\Z^n$ of equal rank, provided by Donaldson's diagonalization theorem, must intersect every unit cube of $\Z^n$. This additional geometric property follows from the consideration of Heegaard Floer homology $d$-invariants and their relationship to the lattice embedding. Further details will be provided in Section~\ref{sec:cubiquitydef}.

\subsection{The $\chi$-slice--ribbon conjecture and 3-braid closures}

We say that a link $L \subset S^3$ is \emph{$\chi$-ribbon} if it admits a slice surface $S$ of Euler characteristic one that can be smoothly isotoped rel boundary so that the radial distance function $B^4 \ra [0,1]$ induces a handle decomposition of $S$ with only 0- and 1-handles, in which case $S$ is called a \emph{ribbon surface} for $L$. This definition subsumes the usual notion of ribbonness for knots. The long-standing question of Fox~\cite{foxproblems} asking whether the sets of slice and ribbon knots coincide readily generalises to $\chi$-slice and $\chi$-ribbon links; we refer to this generalisation as the \emph{$\chi$-slice--ribbon conjecture}.

We will apply Theorem~\ref{thm:1} in order to prove the $\chi$-slice--ribbon conjecture for a large set of \emph{quasi-alternating (QA)} 3-braid links. To this end, we first recall the classification of 3-braids up to conjugacy due to Murasugi:

\begin{thm}[\cite{murasugi}]
\label{thm:murasugi}
Let $\sigma_1$ and $\sigma_2$ be the standard generators of the braid group on three strands $B_3$. Then any word in $B_3$ is equivalent, up to conjugation, to one of the following:
    \begin{enumerate}[label=(\roman*)]
        \item $(\sigma_1 \sigma_2)^{3t} \sigma_1^m \sigma_2^{-1}$, where  $m\in\{-1,-2,-3\}$;  \item $(\sigma_1 \sigma_2)^{3t} \sigma_2^{m}$, where $m\in\ZZ$; or
        \item $(\sigma_1 \sigma_2)^{3t} \sigma_1 \sigma_2^{-(a_1-2)} \cdots \sigma_1 \sigma_2^{-(a_n-2)}$, where $a_i \ge 2$ for all $i$, and $a_j \ge 3$ for some $j$.
    \end{enumerate}
\end{thm}

It follows that this is also a classification of links obtained as closures of 3-braids up to isotopy. For convenience, we introduce the following notation.

\begin{definition}
    We denote the closure of a 3-braid of: type (i) by $D_m^t$; type (ii) by $C_m^t$; and type (iii) by $B_\textbf{a}^t$, where $\textbf{a}=(a_1,\ldots,a_n)$.
\end{definition}

In light of Proposition \ref{prop:parabolic} and the relationship between 3-braids and integral chain link surgeries expounded in Section~\ref{sec:dbcs}, 
it will be a straightforward exercise to settle the $\chi$-sliceness of the 3-braid closures $C_m^t$ and $D_m^t$.

\begin{prop} $D_m^t$ is $\chi$-slice if and only if $(t,m)\in\{(0,-1),(1,-3)\}$. $C_m^t$ is $\chi$-slice for all $m,t$. Moreover, each of the $\chi$-slice links is indeed $\chi$-ribbon.
\label{prop:chi2,3}
\end{prop}

\begin{rem}
A 3-braid link has zero determinant if and only if it is of the form $C_m^t$ with $t$ is odd. We will see in Section \ref{sec:dbcs} that the double covers of $S^3$ branched along such links are precisely the chain link surgeries that are not (cf.~paragraph following the statement of Proposition~\ref{prop:surgerydiagram}). So even though the 3-braid closures themselves are $\chi$-slice, their double branched covers do not bound $\Q B^4s$.
\label{rem:notQS3}
\end{rem}

Generic 3-braid links come in form $B_\textbf{a}^t$. Indeed, we will see in Section~\ref{sec:dbcs} that $Y^t_{\textbf{a}}$ is the double cover branched along $B^t_{\textbf{a}}$. Note that it follows from Theorem~4.1 in~\cite{baldwinquasi} that the manifold $Y^t_{\textbf{a}}$ is an $L$-space and $B^t_{\textbf{a}}$ is QA if and only if $t\in\{-1,0,1\}$. 

In~\cite{brejevs2020ribbon}, the first author constructed Euler characteristic one ribbon surfaces for all links $B^0_{\textbf{a}}$ with $\textbf{a}\in \mathcal{S}_2 \setminus \mathcal{S}_{2c}$.
As a consequence of Theorem \ref{thm:1}, we can extend this result and say precisely which QA 3-braid links $B^t_\textbf{a}$ with $t=\pm1$ are $\chi$-slice.

\begin{thm}
Let $B=B^t_{\textbf{a}}$ be a QA 3-braid closure.  
\begin{enumerate}
    \item If $t=0$ and $\textbf{a}\not\in\mathcal{S}_{2c}$, then $B$ is $\chi$-slice if and only if and $\textbf{a}\in(\mathcal{S}_2\cup \mathcal{S}_2^*)\setminus\mathcal{S}_{2c}$. 
    \item If $t=-1$, then $B$ is $\chi$-slice if and only if $\textbf{a}\in \mathcal{S}_1\cup (\mathcal{S}_{1}^*\setminus\mathcal{S}_{1a}^*)$. 
    \item If $t=1$, then $B$ is $\chi$-slice if and only if $\textbf{a}\in (\mathcal{S}_1\setminus\mathcal{S}_{1a})\cup \mathcal{S}_{1}^*$.
\end{enumerate}
Moreover, every such $\chi$-slice link is $\chi$-ribbon.
\label{thm:2}
\end{thm}

\begin{remark}
\label{rem:S2c} It is not known, in general, which 3-braid closures in $\{B^0_{\textbf{a}}\}_{\textbf{a}\in\mathcal{S}_{2c}}$ are $\chi$-slice;
this family is the most mysterious. Although the double branched covers of such links all bound $\Q B^4$s by Theorem~\ref{thm:s}, this set contains links that are $\chi$-slice and links that are not $\chi$-slice. In particular, the article~\cite{brejevs2020ribbon} exhibits infinitely many strings $\textbf{a}\in\mathcal{S}_{2c}$ such that $B^0_{\textbf{a}}$ is $\chi$-slice; these are of the form $(3+m,3,3,2^{[m]},3,3)$. However, there exist strings $\textbf{a}\in\mathcal{S}_{2c}$ such that $B^0_{\textbf{a}}$ is not a slice knot; in particular, if $\textbf{a}= \mathbf{3}_i$ for $i \in \{ 7, 11, 17, 23 \}$, then $B^0_{\textbf{a}}$ is not slice by~\cite{ammmps2020branched,sartori2010}. Moreover, three more 3-braid knots in $\{B^0_{\textbf{a}}\}_{\textbf{a}\in\mathcal{S}_{2c}}$ are shown to be non-slice in~\cite{brejevs2020ribbon}; in particular, if $\textbf{a}=(2,4,2,4,4,2,4,2,3)$, $\textbf{a}=(2,2,4,3,2,5,2,3,4)$, or $\textbf{a}=(2,3,4,3,4,3,2,3,3)$, then $B^0_\textbf{a}$ is not $\chi-$slice. It is rather challenging to obstruct sliceness of these seven examples and requires an involved verification of the Herald--Kirk--Livingston condition~\cite{heraldkirklivingston} on their twisted Alexander polynomials. It is not known if there are infinitely many non-$\chi$-slice links in $\{B^0_{\textbf{a}}\}_{\textbf{a}\in\mathcal{S}_{2c}}$.
\end{remark}

It follows from the work of Lisca~\cite{lisca-3braids} that the slice--ribbon conjecture holds for all 3-braid knots with $\textbf{a}\not\in \mathcal{S}_{2c}$. Specifically, he showed that finite concordance order 3-braid knots are QA and belong to one of three infinite families, two of which are comprised of ribbon knots, whilst the third family is precisely $\{B^0_{\textbf{a}}\}_{\textbf{a}\in\mathcal{S}_{2c}}$. Hence, Theorem~\ref{thm:2} yields an extension of this result to QA 3-braid links:

\begin{thm} The $\chi$-slice--ribbon conjecture holds for all QA 3-braid links not in $\{B_\textbf{a}^0\}_{\textbf{a}\in \mathcal{S}_{2c}}$.
\label{cor:chisliceribbon}
\end{thm}

\subsection{Summary of results and questions} For easy reference, we will quickly summarize what precisely is known about the following questions:
\begin{itemize}
\item Which chain link surgeries $S^3_\textbf{a}(L_n^t)$ bound $\Q B^4$s?
\item Which 3-braid closures are $\chi$-slice?
\end{itemize}

\noindent By Proposition \ref{prop:surgerydiagram} and Theorem \ref{thm:murasugi}, the sets of chain link surgeries $S^3_\textbf{a}(L_n^t)$ and 3-braid closures can each be partitioned into three subsets.
Moreover, these subsets are related by the double branched cover construction. In particular, we will see in Section~\ref{sec:dbcs} that:
\begin{enumerate}[label=(\roman*)]
    \item $\Sigma_2(D^{t}_{m})=S^3_{m+4}(L_1^{t-1})$ if $t$ is odd and $\Sigma_2(D^{t}_{m})=S^3_{m}(L_1^{t-1})$ if $t$ is even;
    \item $\Sigma_2(C^t_{m})=S^3_{(0,m)}(L_2^{t-1})$;
    \item $\Sigma_2(B_{\textbf{a}}^t)=Y^t_{\textbf{a}}$. 
\end{enumerate}
We first consider case (ii), which is completely resolved.

\begin{center}
\begin{tabular}{|c|c|}
\hline
   \multicolumn{2}{|c|}{ \begin{tabular}{@{}l@{}} Let $C_m^t$ be the closure of the 3-braid $(\sigma_1 \sigma_2)^{3t} \sigma_2^{m}$, where $m\in\ZZ$\end{tabular}}\\\hline\hline
    \begin{tabular}{@{}c@{}}$\Sigma_2(C_m^t)=S^3_{(0,m)}(L_2^{t-1})$ bounds a $\QQ B^4$\\ if and only if $t-1$ is odd\end{tabular}& $C_m^t$ is $\chi$-slice for all $t$ and $m$\\\hline
\end{tabular}
\end{center}
\vspace{.1cm}

\noindent Next we have case (i), which is not completely resolved for chain link surgeries, but is completely resolved for 3-braid closures. Completely resolving this case for chain link surgeries would require one to understand which Brieskorn spheres $\Sigma(2,3,6n+1)$ bound $\Q B^4$s  (cf.~Remark \ref{rem:brieskorn}).

\begin{center}
\begin{tabular}{|c|c|c|}
\hline
    \multicolumn{3}{|c|}{\begin{tabular}{@{}l@{}} Let $D_m^t$ be the closure of the 3-braid $(\sigma_1 \sigma_2)^{3t} \sigma_1^m \sigma_2^{-1}$, where  $m\in\{-1,-2,-3\}$\end{tabular}}\\\hline\hline
    $t$ odd &
    \begin{tabular}{@{}c@{}} 
    (assuming $m\neq -3$ or $t\le 10$)\\
    $\Sigma_2(D_m^t)=S^3_{(m+4)}(L_1^{t-1})$ bounds a $\QQ B^4$\\
    if and only if 
    $(t,m)\in \{(2n,-3)\,|\,0\le n\le4\}$\end{tabular}
    & 
    \begin{tabular}{@{}c@{}}
    $C_m^t$ is $\chi$-slice
    if and only if\\
    $(t,m)=(1,-3)$\end{tabular}
    \\\hline
     $t$ even &
    \begin{tabular}{@{}c@{}} 
    (assuming $m\neq-1$ or $t\ge -10$)\\
    $\Sigma_2(D_m^t)=S^3_{(m)}(L_1^{t-1})$ bounds a $\QQ B^4$\\
    if and only if 
    $(t,m)\in \{-2n,-1)\,|\,0\le n\le 4\}$\end{tabular}
    & 
    \begin{tabular}{@{}c@{}}
    $C_m^t$ is $\chi$-slice
    if and only if\\
    $(t,m)=(0,-1)$\end{tabular}
    \\\hline
\end{tabular}
\end{center}
\vspace{.15cm}

\noindent We now consider case (iii), which constitutes the bulk of the examples. This case is the furthest from being fully resolved. We first consider the case in which $Y_\textbf{a}^t$ is an $L$-space, or equivalently, the case in which $B_\textbf{a}^t$ is QA; this occurs when $t\in\{-1,0,1\}$ (\cite{baldwinquasi}).
The main results of this paper provide a complete classification $\chi-$slice links of the form $B_\textbf{a}^{\pm1}$, and an almost complete classification of chain link surgeries of the form $Y_\textbf{a}^{\pm1}$ that bound $\QQ B^4$s (with the exception of $\textbf{a}=\textbf{3}_6$). The case of $t=0$ is completely resolved for chain link surgeries and mostly resolved for 3-braid closures, except for the mysterious family stemming from the set $\mathcal{S}_{2c}$ (see Remark~\ref{rem:S2c}).

\begin{center}
\begin{tabular}{|c|c|c|}
    \hline
    \multicolumn{3}{|c|}{ 
    \begin{tabular}{@{}c@{}} Let $B_\textbf{a}^t$ be the closure of the 3-braid $(\sigma_1 \sigma_2)^{3t} \sigma_1 \sigma_2^{-(a_1-2)} \cdots \sigma_1 \sigma_2^{-(a_n-2)}$,\\ where $a_i \ge 2$ for all $i$, and $a_j \ge 3$ for some $j$\end{tabular}}\\\hline\hline
    $t$ & 
    $Y_\textbf{a}^t$ bounds a $\QQ B^4$ & $B_\textbf{a}^t$ is
    \\\hline
    
    $0$ &
    if and only if $\textbf{a}\in\mathcal{S}_2\cup\mathcal{S}_2^*$ &
    \begin{tabular}{@{}c@{}}
    $\chi$-slice if $\textbf{a}\in(\mathcal{S}_2\cup\mathcal{S}_2^*)\setminus\mathcal{S}_{2c}$\\ 
    or $\textbf{a}=(3+m,3,3,2^{[m]},3,3)\in\mathcal{S}_{2c}$\\
    \hline 
    not $\chi$-slice if $\textbf{a}$ one of the following strings in $\mathcal{S}_{2c}$:\\
    $\textbf{3}_i\in\mathcal{S}_{2c}$ for $i\in\{7,11,17,23\}$,\\
    $(2,4,2,4,4,2,4,2,3)$, $(2,2,4,3,2,5,2,3,4)$,\\ or $(2,3,4,3,4,3,2,3,3)$
    \end{tabular}\\\hline
    
    $-1$  &
    \begin{tabular}{@{}c@{}} (assuming $\textbf{a}\neq \textbf{3}_6$) \\ if and only if $\textbf{a}\in \mathcal{S}_1\cup (\mathcal{S}^*_1\setminus \mathcal{S}_{1a}^*)$\end{tabular}  &
    $\chi$-slice
    if and only if $\textbf{a}\in \mathcal{S}_1\cup (\mathcal{S}_{1}^*\setminus\mathcal{S}_{1a}^*)$\\\hline 
    
    $1$ & \begin{tabular}{@{}c@{}} (assuming $\textbf{a}\neq\textbf{3}_6$)\\ if and only if $\textbf{a}\in (\mathcal{S}_1\setminus \mathcal{S}_{1a})\cup \mathcal{S}_1^*$\end{tabular}&
    $\chi$-slice
    if and only if $\textbf{a}\in (\mathcal{S}_1\setminus\mathcal{S}_{1a})\cup \mathcal{S}_{1}^*$\\\hline
\end{tabular}
\end{center}
\vspace{.1cm}

\begin{question}
Does $Y^{\pm1}_{\mathbf{3}_6}$ bound a $\Q B^4$? 
\label{q:1}
\end{question}

\begin{question}
For which $\textbf{a}\in\mathcal{S}_{2c}$ is $B_\textbf{a}^t$ $\chi$-slice? 
\label{q:2}
\end{question}

\noindent For non-QA 3-braid closures, much less is known. In particular, it is not known if any of non-QA 3-braid closures are $\chi$-slice. We also know little about which of the chain link surgeries bound $\Q B^4$s. The following comes as a corollary of the proof of Theorem~\ref{thm:s}.

\begin{center}
\begin{tabular}{|c|c|c|}
\hline
\multicolumn{3}{|c|}{ 
    \begin{tabular}{@{}c@{}} Let $B_\textbf{a}^t$ be the closure of the 3-braid $(\sigma_1 \sigma_2)^{3t} \sigma_1 \sigma_2^{-(a_1-2)} \cdots \sigma_1 \sigma_2^{-(a_n-2)}$,\\ where $a_i \ge 2$ for all $i$, and $a_j \ge 3$ for some $j$\end{tabular}}
    \\\hline\hline
    $t$ & $Y_\textbf{a}^t$ & $B_\textbf{a}^t$ 
    \\\hline
    
    even & \begin{tabular}{@{}c@{}} does not bound a $\Q B^4$\\ if $\textbf{a}\not\in\mathcal{S}_2\cup\mathcal{S}_2^*$\\\hline
    bounds a $\Q B^4$ if $\textbf{a}\in\mathcal{S}_{2c}$\end{tabular}& 
    is not $\chi$-slice if $\textbf{a}\not\in\mathcal{S}_2\cup\mathcal{S}_2^*$
    \\\hline
    
    odd  & \begin{tabular}{@{}c@{}} does not bound a $\Q B^4$\\ if $\textbf{a}\not\in \mathcal{S}_{1}\cup \mathcal{S}_{1}^*\cup \mathcal{O}$\end{tabular}&
    is not $\chi$-slice if $\textbf{a}\not\in\mathcal{S}_{1}\cup \mathcal{S}_{1}^*\cup \mathcal{O}$
    \\\hline
\end{tabular}
\end{center}
\vspace{.05cm}

\noindent The main obstacle here is that the obstructions used for QA links all vanish for the nonQA links not covered in the table above.

\begin{question} Does there exist a $\chi$-slice non-QA 3-braid link $B_\textbf{a}^t$? Does there exist $\textbf{a}\in \mathcal{S}_1\cup (\mathcal{S}_2\setminus\mathcal{S}_{2c})\cup\{\mathbf{3}_6\}$ and $t\not\in\{-1,0,1\}$ such that $Y^{t}_{\textbf{a}}$ bounds a $\Q B^4$?
\label{q:3}
\end{question}

\subsection{Organisation of the Paper}
In Section~\ref{sec:dbcs}, we show that 3-manifolds of the form $S^3_{\textbf{a}}(L_n^t)$ are precisely the double branched covers of 3-braid closures and use this to prove Propositions~\ref{prop:surgerydiagram},~\ref{prop:parabolic}, and~\ref{prop:chi2,3}. In Section~\ref{sec:notation} we define the sets $\mathcal{S}_i^*$, $\mathcal{S}_i$, and $\mathcal{O}$ that are used in the statements of the main theorems. In Section~\ref{sec:cubiquitydef} we introduce the cubiquity obstruction from~\cite{greene-pretzel} and~\cite{greene-owens}, as well as a practical method of computing it. The goal of Section~\ref{sec:sharpdbc} is to show that particular negative-definite 4-manifolds bounded by the chain link surgeries $Y^t_{\textbf{a}}$ are sharp whenever $t\le0$. Section~\ref{sec:latticebasic} contains the definitions of standard and circular subsets of $\ZZ^n$, and a condition under which such subsets are not cubiquitous (Theorem \ref{thm:Wodd}). The proof of Theorem~\ref{thm:1} follows in Section~\ref{sec:proof1}. Section~\ref{sec:ribbons} contains constructions of the ribbon surfaces claimed to exist in Proposition~\ref{prop:chi2,3} and Theorem~\ref{thm:2}. 

\subsection*{Acknowledgements} We would like to thank Brendan Owens for many helpful discussions, feedback on an earlier draft of this paper, and explanations of the algorithm for verifying cubiquity in Section~\ref{sec:cubiquitydef}. We also thank Frank Swenton for developing and maintaining \textsc{KLO} software.
Finally, we thank the anonymous  referee for providing detailed feedback and helpful comments. he first author was supported by the \emph{Fonds zur F\"{o}rderung der wissenschaftlichen Forschung} grant ``Cut and Paste Methods in Low Dimensional Topology''.

\subsection*{Data availability statement} The \textsc{SageMath} notebook used for the proof of Theorem~\ref{thm:1} is available on the first author's website: \url{https://vbrej.xyz/research}.

\section{Double Branched Covers of 3-Braid Closures}
\label{sec:dbcs}

In this section we will prove Propositions~\ref{prop:surgerydiagram}, \ref{prop:parabolic}, and~\ref{prop:chi2,3}. Their proofs rely on the relationship between chain link surgeries and 3-braid closures.

\begin{lem} For any string of integers $\textbf{x}=(x_1,\ldots,x_n)$, the manifold $S^3_{\textbf{x}}(L_n^t)$ is the double cover branched along the closure of the 3-braid given by one of:
\begin{itemize}
    \item $(\sigma_1\sigma_2)^{3t}\sigma_1\sigma_2^{x_1}$ if $n=1$ and $t$ is even; 
    \item $(\sigma_1\sigma_2)^{3(t+1)}\sigma_1^{-1}\sigma_2^{x_1}$ if $n=1$ and $t$ is odd; or 
    \item $(\sigma_1\sigma_2)^{3t}\sigma_1\sigma_2^{x_1+2}\sigma_1\sigma_2^{x_2+2}\cdots\sigma_1\sigma_2^{x_n+2}$ if $n\ge2$.
\end{itemize}

\label{lem:YisDBC}
\end{lem}

\begin{proof} 
Let $\textbf{x}=(x_1,\ldots,x_n)$, where $x_i\in\mathbb{Z}$ for all $i$. Consider the surfaces in Figure \ref{fig:dbc}, which are built from a single 0-handle and $n$ 1-handles; each labelled box indicates the number of half-twists. First assume $n\ge2$.
Following~\cite{akbulutkirby}, we get that $S^3_{\textbf{x}}(L_n^0)$ (resp., $S^3_{\textbf{x}}(L_n^{-1})$) is the double cover branched over the link given by the boundary of the surface shown on the top left (resp., top right) of Figure~\ref{fig:dbc}. The reader can verify that these links are isotopic to the closures of the 3-braids given by
\[
\sigma_1\sigma_2^{x_1+2}\sigma_1\sigma_2^{x_2+2}\cdots\sigma_1\sigma_2^{x_n+2} \quad \textrm{and} \quad (\sigma_1\sigma_2)^{-3}\sigma_1\sigma_2^{x_1+2}\sigma_1\sigma_2^{x_2+2}\cdots\sigma_1\sigma_2^{x_n+2},
\]
respectively.
Now, it follows from Dehn surgery arguments in Section~1.1 in~\cite{simone2020classification} that for any integer $t$, the manifold $S^3_{\textbf{x}}(L_n^t)$ is the double cover of $S^3$ branched along the closure of the 3-braid
\[
(\sigma_1\sigma_2)^{3t}\sigma_1\sigma_2^{x_1+2}\sigma_1\sigma_2^{x_2+2}\cdots\sigma_1\sigma_2^{x_n+2}.
\]

Next, let $n=1$ and set $\textbf{x}=(x)$. Similarly, following~\cite{akbulutkirby}, it can be shown that $S^3_{(x)}(L_1^{0})$ (resp., $S^3_{(x)}(L_1^{-1})$) is the double cover branched over the link given by the boundary of the surface shown on the bottom left (resp., bottom right) of Figure~\ref{fig:dbc}. Although these links are isotopic, it is useful to think of them as separate cases that are isotopic to the closures of the 3-braids $\sigma_1\sigma_2^{x}$ and $\sigma_1^{-1}\sigma_2^{x}$, respectively. Once again, following as in Section 1.1 in \cite{simone2020classification}, it can be shown via Dehn surgery that if $t$ is even (resp., $t$ is odd), then $S^3_{(x)}(L_1^t)$ is the double cover of $S^3$ branched along the closure of the 3-braid given by $(\sigma_1\sigma_2)^{3t}\sigma_1\sigma_2^{x}$ (resp., ($\sigma_1\sigma_2)^{3(t+1)}\sigma_1^{-1}\sigma_2^{x}$).  
\end{proof}

\begin{figure}[t]
\centering
\begin{overpic}
[scale=1]{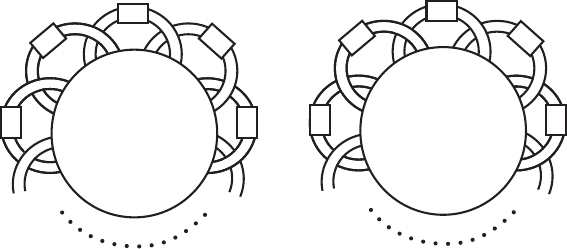}
\put(22,40.75){$x_2$}
\put(36.5,37){\rotatebox{-40}{$x_3$}}
\put(42.5,23.5){\rotatebox{270}{$x_4$}}
\put(6.75,35){\rotatebox{40}{$x_1$}}
\put(1,20.4){\rotatebox{90}{$x_{n}$}}

\put(76.5,41.3){$x_2$}
\put(91,37.3){\rotatebox{-40}{$x_3$}}
\put(97,24){\rotatebox{270}{$x_4$}}
\put(61.2,35.5){\rotatebox{40}{$x_1$}}
\put(55.4,20.7){\rotatebox{90}{$x_{n}$}}
\end{overpic}\vspace{1cm}

\begin{overpic}
[scale=1]{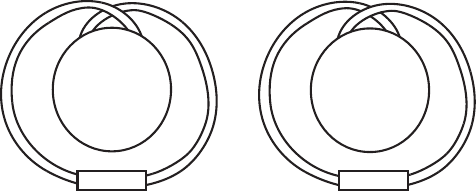}
\put(19,.9){$x-2$}
\put(74,.9){$x+2$}
\end{overpic}\vspace{.4cm}
\caption{Links whose double branched covers yield $Y^t_{\textbf{x}}$, $|t|\le1$.}\label{fig:dbc}
\end{figure}

\noindent Recall that: $B^t_{\textbf{a}}$ denotes the closure of the 3-braid
\[
(\sigma_1 \sigma_2)^{3t} \sigma_1 \sigma_2^{-(a_1-2)} \cdots \sigma_1 \sigma_2^{-(a_n-2)},
\]
where $\textbf{a}=(a_1,\ldots,a_n)$, $a_i \ge 2$ for $i=1,\dots,n$, and some $a_j \ge 3$; $C^t_{m}$ denotes the closure of $(\sigma_1 \sigma_2)^{3t} \sigma_2^{m}$; and $D^t_{m}$ denotes the closure of $(\sigma_1 \sigma_2)^{3t} \sigma_1^m \sigma_2^{-1}$, where $m\in\{-1,-2,-3\}$.

\begin{lem}\hfill
\begin{enumerate}
    \item Let $m\in\{-1,-2,-3\}$. Then $\Sigma_2(D^{t}_{m})=S^3_{m+4}(L_1^{t-1})$ if $t$ is odd and $\Sigma_2(D^{t}_{m})=S^3_{m}(L_1^{t-1})$ if $t$ is even.
    \item Let $m\in\mathbb{Z}$. Then $\Sigma_2(C^t_{m})=S^3_{(0,m)}(L_2^{t-1})$.
    \item Let $\textbf{a}=(a_1,\ldots,a_n)$, where $a_i\ge 2$ for all $i$ and $a_j\ge 3$ for some $j$. Then $\Sigma_2(B_{\textbf{a}}^t)=Y^t_{\textbf{a}}$. 
\end{enumerate}
\label{lem:dbc}
\end{lem}

\begin{proof}
    \par
    (1): Suppose $t$ is odd. By Lemma~\ref{lem:YisDBC}, $S^3_{(m+4)}(L_1^{t-1})$ is the double cover of $S^3$ branched along the closure of $(\sigma_1\sigma_2)^{3(t-1)}\sigma_1\sigma_2^{m+4}$. Write $\Delta = \sigma_1 \sigma_2 \sigma_1 = \sigma_2 \sigma_1 \sigma_2$ and recall that $\Delta^2$ is central in $B_3$ with $\Delta \sigma_1 = \sigma_2 \Delta$ and $\Delta \sigma_2 = \sigma_1 \Delta$. Writing $\sim$ to denote equivalence up to conjugation in $B_3$, we see that
        \begin{align*}
            (\sigma_1\sigma_2)^{3t}\sigma_1^{m} \sigma_2^{-1} &= \Delta^{2(t-1)} \Delta \sigma_2^m \sigma_1^{-1} \Delta = \Delta^{2(t-1)} (\sigma_2 \sigma_1 \sigma_2) \sigma_2^m \sigma_1^{-1} (\sigma_1 \sigma_2 \sigma_1) \\
            &= \Delta^{2(t-1)} \sigma_2 \sigma_1 \sigma_2^{m+2} \sigma_1 \sim \Delta^{2(t-1)} \sigma_1 \sigma_2 \sigma_1 \sigma_2^{m+2}\\
            &= \Delta^{2(t-1)} \sigma_2 \sigma_1 \sigma_2^{m+3} \sim \Delta^{2(t-1)} \sigma_1 \sigma_2^{m+4}\\
            &= (\sigma_1\sigma_2)^{3(t-1)}\sigma_1\sigma_2^{m+4}.
        \end{align*}
    Hence, $\Sigma_2(D^{t}_{m})=S^3_{(m+4)}(L_1^{t-1})$.

    Similarly, if $t$ is even, then Lemma \ref{lem:YisDBC} implies that $S^3_{(m)}(L_1^{t-1})$ is the double cover of $S^3$ branched along the closure of $(\sigma_1\sigma_2)^{3t}\sigma_1^{-1} \sigma_2^{m}$. We have
        \begin{align*}
            (\sigma_1\sigma_2)^{3t}\sigma_1^{m} \sigma_2^{-1} &= \Delta^{2t - 1} \sigma_2^m \sigma_1^{-1} \Delta \sim \Delta^{2t} \sigma_1^{-1} \sigma_2^m = (\sigma_1 \sigma_2)^{3t} \sigma_1^{-1} \sigma_2^m,
        \end{align*}
    so $\Sigma_2(D^{t}_{m})=S^3_{(m)}(L_1^{t-1})$.

    (2): Let $m \in \ZZ$. By Lemma \ref{lem:YisDBC}, $S^3_{(0,m)}(L_2^{t-1})$ is the double cover of $S^3$ branched along the closure of $(\sigma_1\sigma_2)^{3(t-1)}\sigma_1\sigma_2^2\sigma_1\sigma_2^{m+2}$. The statement follows since
        \begin{align*}
            (\sigma_1 \sigma_2)^{3t} \sigma_2^m &= \Delta^{2(t-1)} \sigma_1 \sigma_2 \sigma_1 \sigma_2 \sigma_1 \sigma_2^{m+1}\\
            &= \Delta^{2(t-1)} \sigma_1 \sigma_2^2 \sigma_1 \sigma_2^{m+2} = (\sigma_1\sigma_2)^{3(t-1)}\sigma_1\sigma_2^2\sigma_1\sigma_2^{m+2}.
        \end{align*}

    (3): Follows directly from Lemma~\ref{lem:YisDBC} and the definition of $Y^t_{\textbf{a}}$.\qedhere   
\end{proof}

\begin{proof}[Proof of Proposition~\ref{prop:surgerydiagram}]
Follows from Theorem~\ref{thm:murasugi} and Lemmas~\ref{lem:YisDBC} and~\ref{lem:dbc}.
\end{proof}

\begin{proof}[Proof of Proposition~\ref{prop:parabolic}]
Let $S^3_{\textbf{a}}(L_2^t)$ be of type (ii), where $\textbf{a}=(a,0)$.
If $t$ is even, then a quick homology calculation shows that $S^3_{\textbf{a}}(L_2^t)$ is not a $\Q S^3$, hence it cannot bound a $\Q B^4$. If $t$ is odd, then there is an obvious $\Q B^4$ bounded by $S^3_{\textbf{a}}(L_2^t)$ obtained by changing the 0-framed unknot to dotted circle notation; see Figure \ref{fig:QB}.

\begin{figure}
\centering
\begin{overpic}
    [scale=.7]{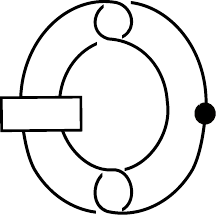}
    \put(16,42.5){$t$}
    \put(5,80){$a$}
\end{overpic}
\caption{A $\Q B^4$ bounded by $S^3_{(a,0)}(L_2^t)$.}\label{fig:QB}
\end{figure}

Now suppose $S^3_{(a)}(L_1^t)$ is of type (i). Since the order of the first homology of a $\Q S^3$ bounding a $\Q B^4$ must be a square (by, e.g., Lemma~3 in~\cite{cassongordon}), and $ 0 < |a| < 4$, it follows that $|a|=1$. If $t=0$, then $a=1$ and $S^3_{(a)}(L_1^t) = S^3$, which bounds $B^4$. 
Now, it is easy to see via surgery 
that if $t<0$, then reversing the orientation of $S^3_{(a)}(L_1^t)$ yields $S^3_{(-a)}(L_1^{-t+1})$ (cf.~Section~2.2 in~\cite{simone2020classification}). Thus we need only consider $S^3_{(a)}(L_1^t)$, where $t\ge0$.
Suppose $t$ is odd. Then by Lemma \ref{lem:dbc}, $S^3_{(-1)}(L_1^t)$ is the double cover of $S^3$ branched along the closure of
\[
    (\sigma_1\sigma_2)^{3(t+1)}\sigma_1^{-1}\sigma_2^{-1}=(\sigma_1\sigma_2)^{6n-1},\textrm{ where } n=(t+1)/2.
\]
This is precisely the torus knot $T(3,6n-1)$ whose double branched cover is the Brieskorn sphere $\Sigma(2,3,6n-1)$ (\cite{milnor1975}), hence $S^3_{(-1)}(L_1^t)=\Sigma(2,3,6n-1)$. It follows from Heegaard Floer homology $d$-invariant calculations in~\cite{tweedy} that $S^3_{(-1)}(L_1^t)$ does not bound a $\Q B^4$.\footnote{It was originally shown in \cite{furuta} that $S^3_{(-1)}(L_1^t)$ does not bound a $\Z B^4$.}

If $t$ is even, then $S^3_{(1)}(L_1^t)$ is diffeomorphic to $\Sigma(2,3,6n+1)$, where $n=t/2$ (see, e.g., Example~1.4 in~\cite{saveliev}). 
By~\cite{akbulutkirby2}, \cite{akbulutlarson}, \cite{fickle} and~\cite{fintushelstern}, it follows that $S^3_{(1)}(L_1^t)$ bounds a $\Q B^4$ for $n \in \{1,2,3,4\}$.
\end{proof}

\begin{proof}[Proof of Proposition \ref{prop:chi2,3}]
Figure~\ref{fig:fam2} in Section~\ref{sec:ribbons} shows that all closures of braids of the form (ii) are $\chi$-ribbon. By Proposition~\ref{prop:parabolic} and Lemma~\ref{lem:dbc}, any 3-braid closure of the form~(i) with $t$ even (resp., $t$ odd) and $m\in\{-2,-3\}$ (resp., $m\in\{-1,-2\}$) is not $\chi$-slice. Suppose either $m=-1$ and $t$ is even, or $m=-3$ and $t$ is odd. In the first case, the closure is the torus knot $T(3,3t-1)$, which is known to not be slice for all $t\neq 0$; if $t=0$, we have the unknot, which is slice. In the second case, the closure of $(\sigma_1 \sigma_2)^{3t} \sigma_1^m \sigma_2^{-1}$ is a knot whose signature equals $4 - 4t$ by~\cite{erle}, hence it is not slice for $t \neq 1$; if $t=1$, then it is the unknot, which is slice.\qedhere
\end{proof}

\section{Dual strings and the sets $\mathcal{S}_i$}
\label{sec:notation}

Let $\textbf{a}=(a_1,\ldots,a_n)$, where $a_i\ge 2$ for all $1\le i\le n$ and let $a_j\ge3$ for some $j$. Recall that $Y^t_{\textbf{a}}=\Sigma_2(B^t_{\textbf{a}})$, where $B^t_{\textbf{a}}$ is the closure of the 3-braid given by

\[
    \tag{$\star$}
    \label{eq:3-braid}
    (\sigma_1 \sigma_2)^{3t} \sigma_1 \sigma_2^{-(a_1-2)} \dots \sigma_1 \sigma_2^{-(a_n-2)}.
\]

By Theorem 4.2 in \cite{baldwinquasi}, $B^t_{\textbf{a}}$ is QA if and only if $t\in\{-1,0,1\}$. 
We call $\textbf{a}$ the \textit{associated string} of $B^t_{\textbf{a}}$ and $Y^t_{\textbf{a}}$.
Since closures of such 3-braids with fixed $t$ whose associated strings are related by cyclic reorderings and reversals 
are isotopic, we only need to consider associated strings up to those two operations.

Any string of integers $(b_1, \dots, b_k)$ with $b_i \ge 2$ for all $i$ and some $b_j \ge 3$ can be written in the form $$(2^{[x_1]}, 3 + y_1, 2^{[x_2]}, 3+y_2, \dots, 2^{[x_m]}, 2 + y_m),$$ where $m\ge 1$, $x_i, y_i \ge0$ for all $i$, and $2^{[x_i]}$ denotes a substring consisting of the integer 2 repeated $x_i$ times. Given such string, we define its \emph{linear dual} to be the string $$(2+x_1, 2^{[y_1]}, 3+x_2, 2^{[y_2]}, 3+x_3, \dots, 3+x_m, 2^{[y_m]}).$$ The linear duals of the strings $(2^{[k]})$ for $k \ge 1$ and $(1)$ are defined to be $(k+1)$ and the empty string, respectively. The \emph{cyclic dual} of a string $$(2^{[x_1]}, 3+y_1, 2^{[x_2]}, 3+y_2, \dots, 2^{[x_m]}, 3+y_m)$$ with $m \ge 1$ and $x_i, y_i \ge 0$ for all $i$ is given by $$(3+x_1, 2^{[y_1]}, 3+x_2, 2^{[y_2]}, \dots, 3+x_m, 2^{[y_m]}).$$

\noindent The next two results are important in future sections.
\begin{lem}[Lemma 2.3 in \cite{simone2020classification}]
    Let $\textbf{a}$ and $\textbf{d}$ be cyclic dual strings. Then reversing the orientation of $Y_\textbf{a}^t$ yields $Y_\textbf{d}^{-t}.$
    \label{lem:revorient}
\end{lem}

\noindent On the level of the 3-braid, we
have a stronger statement.

\begin{lem} The mirror of $B^t_{\textbf{a}}$ is isotopic to $B^{-t}_{\textbf{d}}$.
\label{lem:mirror}
\end{lem}

\begin{proof}
Let $$\textbf{a}=(2^{[x_1]}, 3 + y_1, 2^{[x_2]}, 3+y_2, \dots, 2^{[x_m]}, 2 + y_m)$$
and let 
$$\textbf{d}=(3+x_1, 2^{[y_1]}, 3+x_2, 2^{[y_2]}, \dots, 3+x_m, 2^{[y_m]})
$$ be its cyclic dual.
Then $B_\textbf{a}^t$ is the closure of the 3-braid $$\beta=(\sigma_1\sigma_2)^{3t}\sigma_1^{x_1+1}\sigma_2^{-(y_1+1)}\cdots\sigma_1^{x_m+1}\sigma_2^{-(y_m+1)}.$$ The mirror $mB_\textbf{a}^t$ is the closure of the 3-braid 
\begin{align*}
    m\beta&=(\sigma_1\sigma_2)^{-3t}\sigma_1^{-(x_1+1)}\sigma_2^{y_1+1}\cdots\sigma_1^{-(x_m+1)}\sigma_2^{y_m+1}.
\end{align*}
View $mB_\textbf{a}^t$ as sitting in the $xy$-plane of $\mathbb{R}^3\subset S^3$ and wrapping around the $z$-axis such that the braided portion of the link lies in the region $\{(x,y)\,|\,x> 0\}$. Then performing a $180^\circ$ rotation about the $y$-axis and $z-$axis provides an isotopy between  $mB_\textbf{a}^t$ and the closure of the 3-braid
\begin{align*}
\beta'&=(\sigma_1\sigma_2)^{-3t}\sigma_2^{-(x_1+1)}\sigma_1^{y_1+1}\cdots\sigma_2^{-(x_m+1)}\sigma_1^{y_m+1}.
\end{align*}
Now by conjugating with $\sigma_1$, we have that this link is isotopic to the closure of the 3-braid 
\begin{align*}
\beta'&=(\sigma_1\sigma_2)^{-3t}\sigma_1\sigma_2^{-(x_1+1)}\sigma_1^{y_1+1}\cdots\sigma_2^{-(x_m+1)}\sigma_1^{y_m},
\end{align*}
which is $B_{\textbf{d}}^{-t}$.
\end{proof}

Let us now define the following sets of strings, where in each case $(b_1, \dots, b_k)$ and $(c_1, \dots, c_l)$ are linear duals of each other:
\begin{itemize}
    \item $\mathcal{S}_{1a} = \{ (b_1, \dots, b_k, 2, c_l, \dots, c_1, 2) \mid k + l \ge 3 \}$;
    \item $\mathcal{S}_{1b} = \{ (b_1, \dots, b_k, 2, c_l, \dots, c_1, 5) \mid k + l \ge 2 \}$;
    \item $\mathcal{S}_{1c} = \{ (b_1, \dots, b_k, 3, c_l, \dots, c_1, 3) \mid k + l \ge 2 \}$;
    \item $\mathcal{S}_{1d} = \{ (2, b_1 + 1, b_2, \dots, b_{k-1}, b_k + 1, 2, 2, c_l + 1, c_{l-1}, \dots, c_2, c_1 + 1, 2) \mid k + l \ge 3 \}$;
    \item $\mathcal{S}_{1e} = \{ (2, 3 + x, 2, 3, 3, 2^{[x-1]}, 3, 3) \mid x \ge 1 \}\cup\{(2,3,2,3,4,3)\}$;
    \item $\mathcal{S}_{2a} = \{ (b_1 + 3, b_2, \dots, b_k, 2, c_l, \dots, c_1) \}$;
    \item $\mathcal{S}_{2b} = \{ (3 + x, b_1, \dots, b_{k-1}, b_k + 1, 2^{[x]}, c_l + 1, c_{l-1}, \dots, c_1) \mid x \ge 0 \textrm{ and } k + l \ge 2 \}$;
    \item $\mathcal{S}_{2c} = \{ (b_1 + 1, b_2, \dots, b_{k-1}, b_k + 1, c_1, \dots, c_l) \mid k + l \geq 2 \}$;
    \item $\mathcal{S}_{2d} = \{ (2, 2+x, 2, 3, 2^{[x-1]}, 3, 4) \mid x \ge 1 \} \cup \{ (2, 2, 2, 4, 4) \}$;
    \item $\mathcal{S}_{2e} = \{ (2, b_1 + 1, b_2, \dots, b_k, 2, c_l, \dots, c_2, c_1 + 1, 2) \mid k + l \ge 3 \} \cup \{ (2,2,2,3) \} $;
    \item $\mathcal{O} = \{(6,2,2,2,6,2,2,2), (4,2,4,2,4,2,4,2), (3,3,3,3,3,3)\}$.
    \item $\mathcal{S}_1=\mathcal{S}_{1a}\cup\mathcal{S}_{1b}\cup\mathcal{S}_{1c}\cup\mathcal{S}_{1d}\cup\mathcal{S}_{1e}$
    \item $\mathcal{S}_2=\mathcal{S}_{2a}\cup\mathcal{S}_{2b}\cup\mathcal{S}_{2c}\cup\mathcal{S}_{2d}\cup\mathcal{S}_{2e}$
\end{itemize}

We further define $\mathcal{S}_{i}^*$ to be the set of cyclic dual of the strings belonging to $\mathcal{S}_i$. It is worth noting that $\mathcal{S}_{2c}^*=\mathcal{S}_{2c}$ as every string in $\mathcal{S}_{2c}$ is cyclic dual to itself; the same is true of strings in $\mathcal{O}$. Finally, recall that we denote $(3,3,3,3,3,3) \in \mathcal{O}$ by $\mathbf{3}_6$.

\section{The Cubiquity Obstruction}
\label{sec:cubiquitydef}

In this section we recall a refinement of the Donaldson's theorem obstruction to the existence of a $\QQ B^4$ bounded by a given $\QQ S^3$. The classical form of the obstruction states that if a $\QQ S^3$ bounds both a $\QQ B^4$ and a 4-manifold $X$ with negative-definite intersection form $Q_X$, then the lattice $\Lambda_X = (H_2(X;\ZZ) / \mathrm{Tors}, Q_X)$ admits an embedding $\varphi_X: \Lambda_X \hookrightarrow (\ZZ^{\rk\,\Lambda_X}, -I)$ into the negative-definite integral lattice of equal rank. In~\cite{greene-pretzel}, Greene and Jabuka have derived a more restrictive condition on such embeddings, dubbed \emph{cubiquity} in~\cite{greene-owens} and applicable when $X$ is \emph{sharp}, which is a property related to the Heegaard Floer homology of its boundary. This condition will prove fruitful in the following to obstruct the existence of $\QQ B^4$s bounded by $Y^t_{\textbf{a}}$, where either: $t=-1$ and $\textbf{a}\in\mathcal{S}_{1a}^* \cup (\mathcal{O}\setminus\{\mathbf{3}_6\})$; or $t=1$ and $\textbf{a}\in\mathcal{S}_{1a} \cup (\mathcal{O}\setminus\{\mathbf{3}_6\})$.

\subsection{Cubiquitous Subsets}

We begin with the lattice aspect of the refined obstruction. Hereafter, we denote the negative-definite integral lattice $(\ZZ^n, -I)$ simply by $\ZZ^n$. The next definition and the proposition following are due to Greene and Owens~\cite{greene-owens}.

\begin{defn}
    A subset $S \subset \ZZ^n$ is \emph{cubiquitous} if it has non-zero intersection with every unit cube in $\ZZ^n$, i.e., 
        \[
            S \cap (x + \{ 0, 1\}^n) \neq \varnothing~\textrm{for all}~x\in\ZZ^n.
        \]
    A lattice $\Lambda$ is cubiquitous if it admits an embedding into $\ZZ^{\rk\,\Lambda}$ whose image is cubiquitous; such embeddings are also called cubiquitous.
    \label{def:cubiquitous}
\end{defn}

\begin{prop}[Proposition~2.1 in~\cite{greene-owens}]
    Let $ \Lambda $ be a sublattice of $\ZZ^n$. The following conditions are equivalent:
    \begin{enumerate}
        \item $\Lambda$ is cubiquitous;
        \item every coset of $\Lambda$ is cubiquitous;
        \item every coset of $\Lambda$ contains a point of the unit cube $\{0,1\}^n$.
    \end{enumerate}
\end{prop}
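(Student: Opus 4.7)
The plan is to prove the three equivalences by elementary translation arguments; no deep ingredient is required. The key observation is that the family of unit cubes $\{x + \{0,1\}^n : x \in \ZZ^n\}$ is invariant under translation by arbitrary elements of $\ZZ^n$, so cubiquity of a subset $S \subset \ZZ^n$ depends only on $S$ modulo $\ZZ^n$-translation.

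First, I would show $(1) \Rightarrow (2)$. Given a coset $v + \Lambda$ and an arbitrary cube $x + \{0,1\}^n$, the observation
\[
    (v + \Lambda) \cap (x + \{0,1\}^n) \neq \varnothing \iff \Lambda \cap ((x - v) + \{0,1\}^n) \neq \varnothing
\]
(which follows by subtracting $v$ from both sets) reduces the cubiquity of $v + \Lambda$ to that of $\Lambda$, and the latter holds by (1). The converse $(2) \Rightarrow (1)$ is immediate, taking $v = 0$ so that $v + \Lambda = \Lambda$ is itself a coset.

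Next, $(2) \Rightarrow (3)$ follows by specialising (2) to the particular cube $x + \{0,1\}^n = \{0,1\}^n$ at the origin: for every coset $v + \Lambda$, the intersection $(v + \Lambda) \cap \{0,1\}^n$ is non-empty, which is precisely the statement that every coset contains a point of $\{0,1\}^n$. For $(3) \Rightarrow (2)$, given a coset $v + \Lambda$ and a cube $x + \{0,1\}^n$, I would apply (3) to the coset $(v - x) + \Lambda$ to produce $\lambda \in \Lambda$ with $v - x + \lambda \in \{0,1\}^n$; rearranging gives $v + \lambda \in (v + \Lambda) \cap (x + \{0,1\}^n)$, as required.

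Since each step is essentially a one-line translation, there is no substantive obstacle; the content of the proposition is the observation that cubiquity, a priori a statement about intersecting the full family of unit cubes in $\ZZ^n$, is equivalent to the finite and tractable condition (3), involving only the single cube at the origin and the (finite) index $[\ZZ^n : \Lambda]$-many cosets. It is precisely condition (3) that will be verified algorithmically in the subsequent sections to rule out cubiquitous embeddings.
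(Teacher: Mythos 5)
Your proof is correct; each of the four implications is carried out by a clean translation argument, and in particular you handle the slightly less obvious step $(3)\Rightarrow(2)$ properly by applying (3) to the shifted coset $(v-x)+\Lambda$. Note, however, that the paper does not actually supply a proof of this proposition: it is simply quoted from Greene--Owens~\cite{greene-owens}, so there is nothing internal to compare your argument against. Your write-up fills in exactly the kind of elementary verification the paper omits, and your closing remark correctly identifies the payoff (condition (3) is a finite check over the $[\ZZ^n:\Lambda]$ cosets against the single cube $\{0,1\}^n$, which is what the accompanying \textsc{SageMath} procedure in Section~\ref{sec:cubiquitydef} implements).
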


Condition (3) is particularly useful as it enables us to check whether a lattice embedding is cubiquitous in the following way. Let $\Lambda$ be a lattice endowed with a fixed basis and suppose $\rk\,\Lambda = n$. Choose an orthonormal basis $\{e_1, \dots, e_n\}$ of $\ZZ^n$ and let $\varphi: \Lambda\hookrightarrow\ZZ^n$ be a lattice embedding represented with respect to the chosen bases by an integral matrix $B$. Let $D$ be the Smith normal form of $B$, i.e., the diagonal matrix $D=\mathrm{diag}(a_1,\dots,a_n) \in \mathrm{Mat}_n(\ZZ)$ such that $a_1 \geqslant 1$ and $a_i\,|\,a_{i+1}$ for $i=1,\dots, n-1$, satisfying the condition that $D=UBV$ for two matrices $U, V \in \mathrm{Mat}_n(\ZZ)$ which are invertible over $\ZZ$. Consider the commutative diagram
\begin{equation*}
    \xymatrix{
        \ZZ^n \ar[r]^{B} & \ZZ^n \ar[r] \ar[d]^{U} &\ZZ^n/B\ZZ^n \ar[d]^{\psi} \\
        \ZZ^n \ar[u]^{V} \ar[r]^{D} & \ZZ^n  \ar[r] & \ZZ^n/D\ZZ^n \ar[r]^<<<<<{\sim} & \displaystyle{\bigoplus_{i=1}^n \ZZ/a_i\ZZ},
    }
\end{equation*}
where the unlabelled arrows are canonical quotient maps and $\psi([x]) = [Ux]$ for all $[x]\in \ZZ^n/B\ZZ^n$. Every class in $\ZZ^n/D\ZZ^n$ is represented by a vector $y=(y_1,\dots,y_n)$ with $0\le y_i \le a_i$ for $i=1,\dots, n$, hence every class in $\ZZ^n/B\ZZ^n$ is represented by $ U^{-1}y $ for some such $y$. Clearly, for $z\in\{0,1\}^n$ we have that $ [U^{-1}x] = [z] $ if and only if $U^{-1}x - z \in \im\,B$. To verify that $\varphi$ is cubiquitous, it suffices to check that for every $y$ as above, there exists $z\in \{0,1\}^n$ such that $B^{-1}(U^{-1}y - z) \in \ZZ^n $, where $B^{-1}$ is the inverse of $B$ over $\QQ$. This procedure is implemented in the accompanying \textsc{SageMath} notebook, which will be used in Section~\ref{sec:proof1} to prove Theorem~\ref{thm:1}.

\subsection{Sharp Manifolds}

Suppose $Y$ is a $\QQ S^3$ equipped with a $\spinc$ structure $ \mathfrak{t} $. In~\cite{ozsz2003}, Ozsv\'ath and Szab\'o employ Heegaard Floer homology to associate to every such pair a rational number $ d(Y,\mathfrak{t})$, called the \emph{correction term}, or the \emph{$d$-invariant}. If $X$ is a negative-definite 4-manifold bounded by $Y$ and equipped with a $\spinc$ structure $\mathfrak{s}$, we have that 
    \begin{equation*}
        \tag{$\dagger$}
        \label{eq:ineq-sharp}
        c_1(\mathfrak{s})^2 + b_2(X) \le 4d(Y, \mathfrak{s}|_Y),
    \end{equation*}
where $c_1(\mathfrak{s})$ is the first Chern class of $\mathfrak{s}$, $b_2(X)$ is the second Betti number of $X$, and $\mathfrak{s}|_Y$ is the restriction of $\mathfrak{s}$ to $Y$~\cite{os-absolutelygraded}.

\begin{defn}
    A negative-definite 4-manifold $X$ with $\QQ S^3$ boundary $ Y $ is \emph{sharp} if for every $\mathfrak{t} \in \Spinc(Y)$ there exists $\mathfrak{s} \in \Spinc(X)$ with $\mathfrak{t} = \mathfrak{s}|_Y$ such that equality is attained in~\eqref{eq:ineq-sharp}.
\end{defn}

We can now state the cubiquity obstruction precisely.

\begin{thm}[Theorem~6.1 in~\cite{greene-owens}]
    \label{thm:cbobstruction}
    Let $X$ be a sharp 4-manifold with the intersection lattice $\Lambda_X$. If $\pp X$ is a $\QQ S^3$ that also bounds a $\QQ B^4$, then $\Lambda_X$ admits a cubiquitous embedding into $\ZZ^{\rk\,\Lambda_X}$.
\end{thm}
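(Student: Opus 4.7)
The plan is to exploit the interplay between Donaldson's theorem, sharpness, and the vanishing of $d$-invariants for $\QQ B^4$ fillings, following the strategy of Greene--Jabuka. Let $W$ be a $\QQ B^4$ bounded by $Y = \partial X$, and form the closed 4-manifold $Z = X \cup_Y (-W)$. A Mayer--Vietoris computation using that $H_\ast(W;\QQ)$ is trivial in positive degrees and $Y$ is a $\QQ S^3$ shows that $H_2(Z;\QQ) \cong H_2(X;\QQ)$ as quadratic spaces, so $Z$ is closed and negative-definite of rank $n := \rk\,\Lambda_X$. By Donaldson's theorem, $H_2(Z;\ZZ)/\mathrm{Tors} \cong (\ZZ^n, -I)$, and the inclusion-induced map $H_2(X) \to H_2(Z)$ descends to the embedding $\varphi_X \colon \Lambda_X \hookrightarrow \ZZ^n$ whose cubiquity we wish to establish.

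By condition~(3) of the preceding Proposition, it suffices to show that every coset $v + \Lambda_X$ in $\ZZ^n$ meets $\{0,1\}^n$. Via the bijection $w \leftrightarrow 2w - \mathbf{1}$, this is equivalent to showing that every characteristic vector $\xi \in \ZZ^n$ is congruent modulo $2\Lambda_X$ to a \emph{short} characteristic vector, i.e.\ one in $\{\pm 1\}^n$.

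Given such $\xi$, lift it to $c_1(\mathfrak{s})$ for some $\mathfrak{s} \in \Spinc(Z)$, and set $\mathfrak{t} = \mathfrak{s}|_Y$. By sharpness of $X$, there exists $\mathfrak{s}_X' \in \Spinc(X)$ extending $\mathfrak{t}$ with $c_1(\mathfrak{s}_X')^2 + n = 4 d(Y,\mathfrak{t})$. Re-gluing $\mathfrak{s}_X'$ with $\mathfrak{s}|_{-W}$ along $\mathfrak{t}$ yields a new Spin$^c$ structure $\mathfrak{s}'$ on $Z$. The key computation is $c_1(\mathfrak{s}')^2 = -n$: the orthogonal decomposition $H_2(Z;\QQ) \cong H_2(X;\QQ) \oplus H_2(W;\QQ)$ gives $c_1(\mathfrak{s}')^2_Z = c_1(\mathfrak{s}_X')_X^2 + 0 = 4 d(Y,\mathfrak{t}) - n$, while applying~\eqref{eq:ineq-sharp} to both $W$ and $-W$ (each trivially negative-semidefinite with $b_2 = 0$) yields $d(Y,\mathfrak{t}) \ge 0$ and $d(Y,\mathfrak{t}) \le 0$, so $d(Y,\mathfrak{t}) = 0$. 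Since any characteristic vector of $(\ZZ^n, -I)$ has square $\le -n$ with equality if and only if all its coordinates are $\pm 1$, we conclude $c_1(\mathfrak{s}') \in \{\pm 1\}^n$.

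It remains to show $\xi \equiv c_1(\mathfrak{s}') \pmod{2\Lambda_X}$. Since $\mathfrak{s}$ and $\mathfrak{s}'$ agree on $-W$, the difference $c_1(\mathfrak{s}) - c_1(\mathfrak{s}')$ lies in the kernel of the restriction $H^2(Z) \to H^2(-W)$, which by excision and Poincar\'e--Lefschetz duality is precisely the image of $H_2(X) \to H_2(Z)/\mathrm{Tors}$, i.e.\ $\Lambda_X$. Both classes being characteristic, their difference is divisible by $2$, giving the required congruence. The main obstacle is the orthogonal-decomposition step: verifying that $H_2(Z;\QQ)$ splits so that the cross terms in $c_1(\mathfrak{s}')^2$ vanish, and identifying the kernel of restriction with $\Lambda_X$, both rely on carefully combining Mayer--Vietoris with the rational acyclicity of $W$ and Poincar\'e--Lefschetz duality.
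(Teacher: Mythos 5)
The paper cites this theorem from Greene--Owens without reproducing its proof, so there is no in-paper argument to compare against; your proposal correctly reconstructs the standard Greene--Jabuka/Greene--Owens argument and is essentially sound. Two minor points of phrasing worth tightening: first, $\pm W$ have $b_2 = 0$, so the intersection form is trivial rather than ``negative-semidefinite,'' and~\eqref{eq:ineq-sharp} applies with $c_1^2 = 0$ on both sides to force $d(Y,\mathfrak{t}) = 0$. Second, the identification of $\ker\bigl(H^2(Z)\to H^2(-W)\bigr)$ with $\Lambda_X$ should be carried out integrally before passing to the torsion-free quotient: since $H^2(-W;\QQ) = 0$, the rational kernel condition is vacuous and cannot pin down $\Lambda_X$. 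Working over $\ZZ$, one uses $H^1(-W;\ZZ) = 0$ to get an injection $H^2(Z,-W)\hookrightarrow H^2(Z)$ whose image is the kernel, then excision and Lefschetz duality give $H^2(Z,-W)\cong H^2(X,Y)\cong H_2(X)$, and only after this does modding out torsion (and Poincar\'e duality for the closed $Z$) identify the image as the sublattice $\Lambda_X \subset \ZZ^n$ carrying the difference $\tfrac{1}{2}\bigl(c_1(\mathfrak{s}) - c_1(\mathfrak{s}')\bigr)$.
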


In view of the above discussion, one can show that a $\QQ S^3$ does not bound a $\QQ B^4$ by constructing a sharp 4-manifold $X$ bounded by the $\QQ S^3$, finding all embeddings of $\Lambda_X$ into the standard integral lattice of the same rank, and verifying that none of them are cubiquitous.

\section{Sharp Manifolds and QA 3-Braid Closures}
\label{sec:sharpdbc}

Let $\textbf{a}=(a_1,\ldots,a_n)$, where $a_i\ge 2$ for all $i$. Let $X_{\textbf{a}}^t$ denote the 4-manifold with handlebody diagram given in Figure \ref{fig:P}; recall that $t$ indicates the number of half-twists.
Note that if $a_j\ge 3$ for some $j$, then $\partial X_{\textbf{a}}^t=Y^t_{\textbf{a}}$. 
Note that if $\textbf{a}'$ is any cyclic reordering of $\textbf{a}$, then $X_{\textbf{a}}^t$ and $X_{\textbf{a}'}^t$ are diffeomorphic.
As discussed in Section \ref{sec:dbcs}, $Y_{\textbf{a}}^t$ is the double cover of $S^3$ branched over the closure of the 3-braid $(\sigma_1\sigma_2)^{3t}\sigma_1\sigma_2^{-(a_1-2)}\cdots\sigma_1\sigma_2^{-(a_n-2)}$. Note that it follows from Theorem~4.1 in~\cite{baldwinquasi} that when $t\in\{-1,0,1\}$, $Y^t_{\textbf{a}}$ is an $L$-space. The goal of this section is to prove the following:

\begin{thm} Let $\textbf{a}=(a_1,\ldots,a_n)$ such that $a_i\ge 2$ for all $i$ and $a_j\ge 3$ for some $j$. Then $X_{\textbf{a}}^t$ is sharp if and only if $t$ is even or $t\le0$ is odd.
\label{thm:sharp}
\end{thm}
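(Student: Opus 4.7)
The plan is to split the equivalence into two directions, proving sharpness when $t \le 0$ via induction with the Heegaard Floer surgery exact triangle, and obstructing sharpness when $t = +1$ by exhibiting an unreachable $\spinc$ structure.

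For the sharpness direction, I would induct on the complexity $c(\textbf{a}) = n + \sum_{i=1}^n (a_i - 2)$. The base cases reduce to small cyclic plumbings (e.g.\ $n = 1$ with $a_1 \ge 3$, or small $n$ with all $a_i = 2$ except one) that can be identified with standard sharp manifolds: for $t = 0$ these are handled by Ozsv\'ath--Szab\'o's classical sharpness result \cite{os-absolutelygraded} for negative-definite tree plumbings whose weights dominate the valences, while for $t = -1$ one verifies sharpness directly via explicit $d$-invariant computations. For the inductive step, pick a vertex $i$ with $a_i \ge 3$ and let $K$ denote the knot dual to the corresponding $2$-handle. The Heegaard Floer surgery exact triangle on $K$ relates $Y_{\textbf{a}}^t$ to two auxiliary $3$-manifolds: a cyclic plumbing where $a_i$ has been replaced by $a_i - 1$ (lower complexity, sharp by induction) and the $\infty$-surgery, which breaks the cycle and yields a tree plumbing (sharp by the classical result). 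Since $Y_{\textbf{a}}^t$ is an $L$-space for $t \le 0$ (by Baldwin's classification and Theorem~4.1 of \cite{baldwinquasi}), the triangle behaves well; carefully tracking $\spinc$ structures and the relationship between correction terms in the three terms allows one to lift sharpness from the flanking manifolds to $X_{\textbf{a}}^t$.

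For the obstruction direction ($t = +1$), I would compare $X_{\textbf{a}}^{+1}$ to $X_{\textbf{a}}^{0}$ and analyse how the extra $(\sigma_1 \sigma_2)^{3}$ factor perturbs both the intersection form and the correction terms on the boundary. The goal is to pinpoint a $\spinc$ structure $\mathfrak{t} \in \Spinc(Y_{\textbf{a}}^{+1})$ for which
\[
    \max \bigl\{ c_1(\mathfrak{s})^2 + b_2(X_{\textbf{a}}^{+1}) \bigm| \mathfrak{s} \in \Spinc(X_{\textbf{a}}^{+1}),~\mathfrak{s}|_{Y} = \mathfrak{t} \bigr\} < 4\, d(Y_{\textbf{a}}^{+1}, \mathfrak{t}).
\]
A natural concrete strategy is to apply the surgery exact triangle to the $2$-handles encoding the positive twist region, computing the shift $d(Y_{\textbf{a}}^{+1}, \mathfrak{t}) - d(Y_{\textbf{a}}^{0}, \mathfrak{t}')$ for corresponding $\spinc$ classes, and comparing with the shift in the maximal attainable value of $c_1^2 + b_2$. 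Alternatively, verify non-sharpness in an explicit small case (e.g.\ $\textbf{a} = (3)$ or $(2,3)$) by direct computation, then propagate this failure to all $\textbf{a}$ via the same surgery triangle machinery that handled the sharp direction.

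The main obstacle is the $\spinc$-bookkeeping throughout the surgery exact triangle: one must verify that the equality case in the inequality $c_1(\mathfrak{s})^2 + b_2(X) \le 4 d(Y, \mathfrak{s}|_Y)$ descends correctly to every $\spinc$ structure on the boundary through each triangle, matching the correction-term identities that are only available because all three terms are $L$-spaces. A secondary subtlety is handling the degenerate base cases of the induction, e.g.\ when decreasing $a_i$ to $2$ removes the last entry $\ge 3$ from $\textbf{a}$. For the $t = +1$ direction, the key challenge is identifying the correct witnessing $\spinc$ structure; the most natural candidate is the one corresponding to a characteristic covector of small square in the intersection form of $X_{\textbf{a}}^{+1}$, and the bulk of the work is then a careful $d$-invariant estimate forcing the strict inequality.
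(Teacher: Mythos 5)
Your strategy for the $t\le 0$ direction -- induct on complexity using the Heegaard Floer surgery exact triangle, with an auxiliary cyclic plumbing at lower complexity and a linear plumbing closing the triad -- is essentially identical to the paper's argument. The paper forms a surgery triad $(Y^t_{\textbf{a}'}, Y^t_{\textbf{a}}, L(p,q))$ where $\textbf{a}'$ decreases one $a_i$ by one and $[a_2,\dots,a_n]=p/q$, and invokes Proposition~2.6 of~\cite{ozsvathszabosharp} to propagate sharpness; the ``careful tracking of $\spinc$ structures'' you describe is exactly what that proposition packages. The base cases are handled much as you outline: for $n=1$ one has a single $2$-handle, and for the minimal cyclic cases (all $a_i=2$ with $t=-1$, one $a_j=3$ with $t=0$) explicit $d$-invariant computations are done via blowups/blowdowns that convert the cyclic plumbing into a tree plumbing on the boundary level. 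One small imprecision: the classical sharpness theorem for negative-definite tree plumbings does not apply directly to $X^t_{\textbf{a}}$, which is a \emph{cyclic} plumbing; the paper uses it only for an auxiliary $Z$ bounding the same $3$-manifold, and then verifies sharpness of $X^{-1}_{\textbf{a}}$ itself by exhibiting explicit characteristic vectors.

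For the obstruction direction, your general idea of finding a $\spinc$ structure whose $d$-invariant cannot be matched by any characteristic covector is correct, but your proposed comparison of $X^{+1}_{\textbf{a}}$ against $X^{0}_{\textbf{a}}$ is a suboptimal choice: these two $4$-manifolds have \emph{different} intersection forms (they differ with the parity of $t$), so you would be comparing optimisation problems over different lattices, which is awkward. The paper's key observation is that $X^{2k+1}_{\textbf{a}}$ has the \emph{same} intersection form as $X^{-1}_{\textbf{a}}$ for every $k$, so the maximum value of $c_1(\mathfrak{s})^2+n$ over any fixed restriction coset is literally unchanged. By Baldwin's formula for the $d$-invariant of the contact $\spinc$ structure $\mathfrak{t}_0$, this value jumps by $1$ when passing from $t<0$ to $t\ge 0$, while the remaining $d$-invariants agree for $t$ of fixed parity. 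Sharpness of $X^{-1}_{\textbf{a}}$ then forces non-sharpness of $X^{2k+1}_{\textbf{a}}$ for $k\ge 0$ and preserves sharpness for $k<0$, and a parallel argument handles even $t$. Your proposal also only explicitly addresses $t\in\{-1,0,1\}$, whereas the theorem is stated for all integers $t$; the parity-of-$t$ argument closes this gap cleanly, and you would need some such device to finish.
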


\begin{figure}[h]
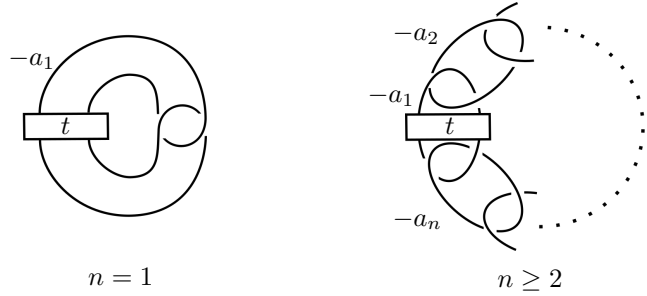

\centering
\begin{overpic}
[scale=.6]{figures/n1.png}
\put(-5,75){$-a_1$}
\put(16,47){$t$}
\put(27,-14){$n=1$}
\end{overpic}\hspace{1in}
\begin{overpic}
[scale=.6]{figures/n_1.png}
\put(-15,60){$-a_1$}
\put(-5,85){$-a_2$}
\put(-5,10){$-a_n$}
\put(16,47){$t$}
\put(37,-15){$n\ge2$}
\end{overpic}\vspace{.4cm}
\caption{The 4-manifold $X^t_{(a_1, \dots, a_n)}$ whose boundary is $Y^t_{(a_1, \dots, a_n)}$, the double cover of $S^3$ branched over the closure of the 3-braid $(\sigma_1\sigma_2)^{3t}\sigma_1\sigma_2^{-(a_1-2)}\dots\sigma_1\sigma_2^{-(a_n-2)}$.}\label{fig:P}
\end{figure}

Although we will only need the sharpness of $X^{-1}_{\textbf{a}}$, we will prove the much more general result as it might be of independent interest. To prove Theorem~\ref{thm:sharp}, we will use induction. To this end, we start with the base cases.

\begin{lem} Let $n\ge2$ and $a_i=2$ for all $i$. Then $X^{-1}_{\textbf{a}}$ is sharp.
\label{lem:basesharp}
\end{lem}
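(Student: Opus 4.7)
The plan is to exhibit $X^{-1}_{(2,\ldots,2)}$ as diffeomorphic, via handle calculus, to a standard negative-definite tree plumbing on $(-2)$-framed disk bundles, which is sharp by the classical result of Ozsv\'ath--Szab\'o for negative-definite plumbings with $L$-space boundary. The first step is to unpack the two ``$t=-1$'' twist boxes in Figure~\ref{fig:P}. Each of these encodes the central element $(\sigma_1\sigma_2)^{-3}$ of $B_3$ acting on three parallel strands, and admits a standard expansion as a small, explicit collection of framed handles. Replacing both twist boxes by this expansion turns the abstract diagram into an ordinary Kirby diagram with integer framings.

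Next, exploiting the fact that all non-twist framings equal $-2$, I would perform a symmetric sequence of handle slides and blow-downs. A convenient feature here is that the cyclic chain of $-2$'s together with the two expanded twist regions has enough repetition that handles introduced by the twists can be slid onto the chain and successively eliminated. The aim is to reduce $X^{-1}_{(2,\ldots,2)}$ to a linear (or at worst tree) plumbing; since the boundary $Y^{-1}_{(2,\ldots,2)}$ is an $L$-space by Theorem~4.1 of~\cite{baldwinquasi}, the resulting plumbing is negative-definite with $L$-space boundary.

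Finally, I would invoke the Ozsv\'ath--Szab\'o theorem that any negative-definite plumbing on a tree whose boundary is an $L$-space is sharp, which, combined with the diffeomorphism established above, yields sharpness of $X^{-1}_{(2,\ldots,2)}$. An equivalent approach, should the handle calculus prove awkward, is to compute the $d$-invariants of $Y^{-1}_{(2,\ldots,2)}$ directly from the lattice cohomology/graded roots formula and exhibit, for each $\spinc$ structure on the boundary, a characteristic vector on $X^{-1}_{(2,\ldots,2)}$ that saturates the inequality~\eqref{eq:ineq-sharp}; since all framings are $-2$, these combinatorics are particularly tractable.

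The main obstacle is the bookkeeping in the handle-calculus step: one must faithfully expand the $(\sigma_1\sigma_2)^{-3}$ twist boxes, keep careful track of framings as slides are performed around the cycle, and verify that the reduced diagram is a tree plumbing (not, say, a graph that retains a cycle or whose intersection form fails to be negative-definite). Because this lemma is the base case of the induction used to prove Theorem~\ref{thm:sharp}, I expect the argument to be short once the correct diffeomorphism is spotted, with the bulk of the subsequent work residing in the inductive step that increases some $a_i$ from $2$.
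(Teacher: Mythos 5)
Your primary strategy has a genuine gap. You propose to show that $X^{-1}_{(2,\ldots,2)}$ is \emph{diffeomorphic} to a negative-definite tree plumbing via handle calculus (expanding the twist boxes, sliding, and blowing down), and then to invoke the Ozsv\'ath--Szab\'o plumbing result. But $X^{-1}_{\mathbf{a}}$ is a \emph{cyclic} plumbing-type manifold, and the handle moves you envision (blowups and blowdowns) change the diffeomorphism type of the 4-manifold: they preserve the boundary $Y$ but produce a genuinely different filling. Indeed, the paper constructs exactly such a tree plumbing $Z$ with $\partial Z = Y$ and all even framings, but $Z$ is \emph{not} diffeomorphic to $X$. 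Sharpness is a property of the 4-manifold, not of its boundary, so establishing that \emph{some} filling of $Y$ is sharp says nothing about whether $X$ itself is sharp. Your proposed reduction therefore cannot close the argument.

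Your ``equivalent approach'' fallback is, in fact, the paper's actual strategy and is not equivalent to the first. The paper pins down the four $d$-invariants of $Y$ (using the auxiliary tree plumbing $Z$ for one of them, a theorem of Baldwin for another, and a metaboliser argument for the remaining two), and then separately exhibits four explicit characteristic vectors $K_1,\dots,K_4 \in H^2(X)$ whose squares realise the bound $c_1^2 + b_2 = 4d$ and whose associated $\spinc$ structures restrict to the four distinct $\spinc$ structures on $Y$. This two-step structure --- compute $d(Y,\mathfrak{t})$ by any means available, then verify the saturation inequality directly on $X$ --- is the correct template. If you pursue it, be aware that the Ozsv\'ath--Szab\'o plumbing sharpness theorem applies to trees with at most one bad vertex, not to arbitrary negative-definite tree plumbings with $L$-space boundary, so you should check that hypothesis for $Z$ rather than relying on the $L$-space condition alone.
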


\begin{proof}
Set $X=X_{\textbf{a}}^{-1}$ and $Y=\partial X$. 
Let $Q$ denote the intersection form of $X$.
It is easy to see that if $n\ge2$, then $|\det Q|=4$; hence $|H_1(Y)|=|\Spinc(Y)|=4$.
Moreover, note that $Y$ is the double cover of $S^3$ branched over the closure of the 3-braid $(\sigma_1\sigma_2)^{-3}\sigma_1^n$.

We claim that the $d$-invariants of $Y$ are $\{\frac{n}{4}-1,\frac{n}{4},0,0\}$. 
If $n=2$, then $Y=L(2,1)\#\allowbreak L(2,1)$; by Theorem~4.3 and Proposition~4.8 in~\cite{os-absolutelygraded}, the $d$-invariants are indeed $\{-\frac{1}{2},\frac{1}{2},\allowbreak 0,0\}$. Now assume $n\ge3$.
By Theorem~6.2(2) in~\cite{baldwinquasi}, there is a $\spinc$ structure $\mathfrak{s}_0$ satisfying $d(Y,\mathfrak{s}_0)=\frac{n}{4}-1$.

To show that there is a $\spinc$ structure $\mathfrak{t}$ such that $d(Y,\mathfrak{t})=\frac{n}{4}$, we will construct a negative-definite plumbing $Z$ with $\partial Z=Y$ and $b_2(Z)=n$, and use the method of \cite{ozsvathszaboplumbed}. Namely, we will find a characteristic element $K\in H^2(Z)$ such that $\frac{K^2+n}{4}=\frac{n}{4}$, or $K^2=0$, and that satisfies the following: 
if $K=c_1(\mathfrak{s})$ and $\mathfrak{s}|_{Y}=\mathfrak{s'}|_{Y}$ for some $\spinc$ structure $\mathfrak{s}'$ on $Z$, then $K^2\ge c_1^2(\mathfrak{s'})$.
Consider Figure \ref{fig:X-1sharpbase}. The first handlebody diagram is that of $X$. Blow up the diagram with a $+1$-framed unknot as in the second diagram. We can then blow down $n-3$ successive $-1$-framed unknots to obtain the third diagram. After handle sliding as indicated by the green arrow, we obtain the fourth diagram. Finally, blow up the linking between the $(n-2)$-framed and $-1$-framed 2-handles with a $+1$-framed unknot and perform successive blowdowns until we obtain the last diagram; call the resulting 4-manifold $Z$. Note that $\partial Z=Y$. By \cite{ozsvathszaboplumbed}, $Z$ is sharp. 
Since the framing of each 2-handle of $Z$ is even, the class $K=0$ is characteristic in $H^2(Z)$. Hence $K^2=0$. Since $Z$ is negative-definite, if $K=c_1(\mathfrak{s})$ and $\mathfrak{s}|_{Y}=\mathfrak{s'}|_{Y}$ for some $\spinc$ structure $\mathfrak{s}'$ on $Z$, then $K^2\ge c_1(\mathfrak{s}')^2$. Hence $d(Y,\mathfrak{s}|_Y)=\frac{n}{4}$. 

It is easy to see that $Y$ bounds a $\QQ B^4$: blow down the third diagram in Figure \ref{fig:X-1sharpbase} two times and then change the resulting $0$-framed unknot into a dotted circle, as shown in Figure \ref{fig:ball}, to see a $\QQ B^4$ bounded by $Y$. 
Hence there is a metaboliser of $\spinc$ structures for which the $d$-invariant vanishes (cf.~Section~2.3 in~\cite{greene-pretzel}). 
Thus the remaining two $\spinc$ structures must have vanishing $d$-invariants.

\begin{figure}
    \centering
    \hspace{.5cm}
    \begin{overpic}
    [scale=.5]{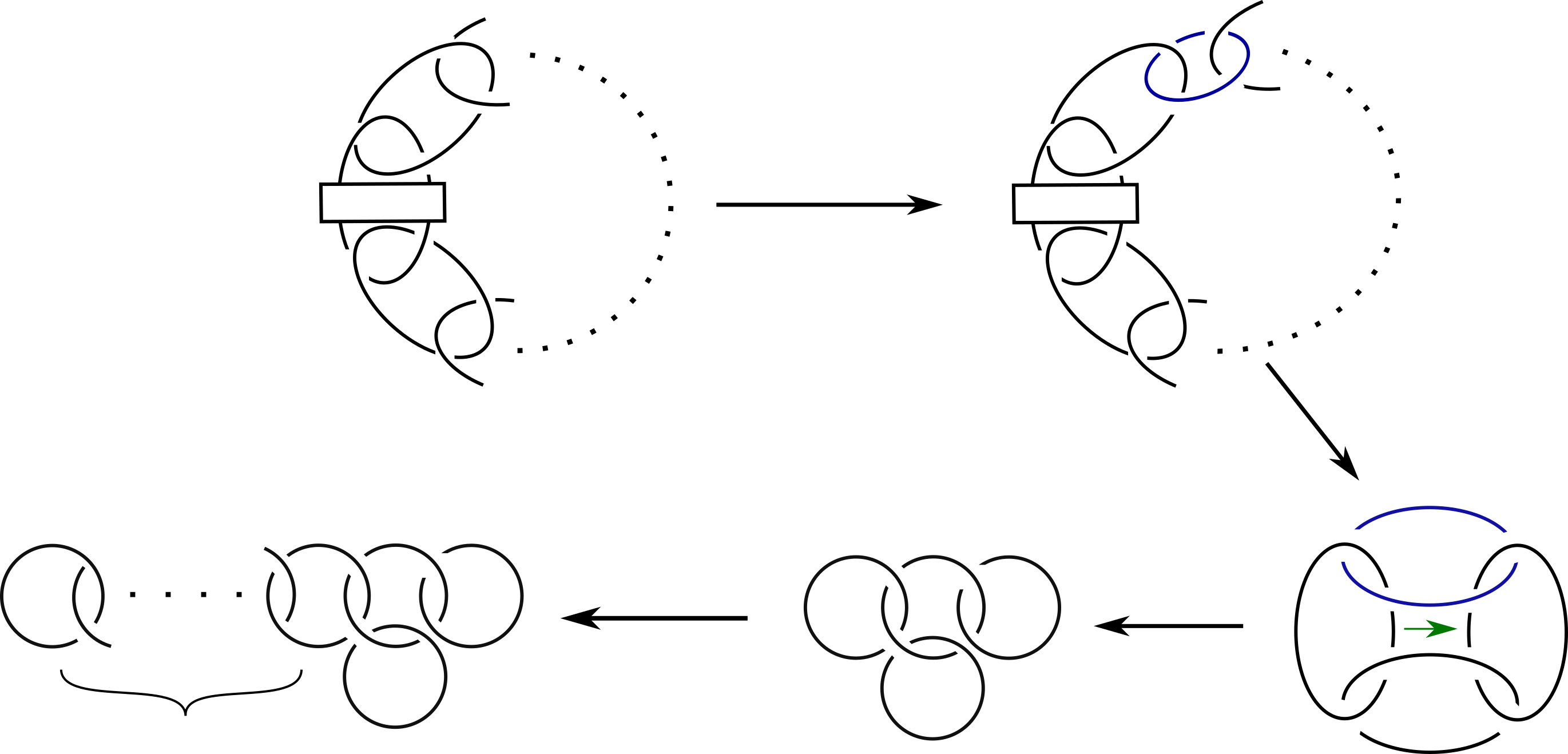}
    \put(2,35){\huge$X=$}
    \put(21,25){$-2$}
    \put(17,38){$-2$}
    \put(21,44){$-2$}
    \put(22,34.25){$-1$}
    \put(45,37){blow down}
    \put(65,25){$-2$}
    \put(61.5,38){$-2$}
    \put(65,44){$-1$}
    \put(75,47){\color{darkblue}$1$}
    \put(78,48.5){$-1$}
    \put(66,34.2){$-1$}
    \put(86,20){\shortstack[c]{blow down\\$n-3$ times}}
    \put(88.5,16.5){\color{darkblue}$n-2$}
    \put(79,10){$-1$}
    \put(100,10){$-1$}
    \put(89,-3){$-2$}
    \put(71,10){slide}
    \put(49,13.5){\color{darkblue}$n-2$}
    \put(58,13.5){$-1$}
    \put(63,13.5){$-2$}
    \put(61,0){$-2$}
    \put(37,2.5){\shortstack[c]{blow up\\ and\\\\\\blow\\down}}
    \put(28,14){$-2$}
    \put(22,14){$-2$}
    \put(17,14){$-2$}
    \put(1,14){$-2$}
    \put(26,0){$-2$}
    \put(8,0){$n-3$}
    \put(-10,8.2){\huge$Z=$}
    \end{overpic}
    \vspace{.2cm}
    \caption{$\partial X=\partial Z$.}
    \label{fig:X-1sharpbase}
\end{figure}

\begin{figure}
    \centering
    \begin{overpic}
    [scale=.5]{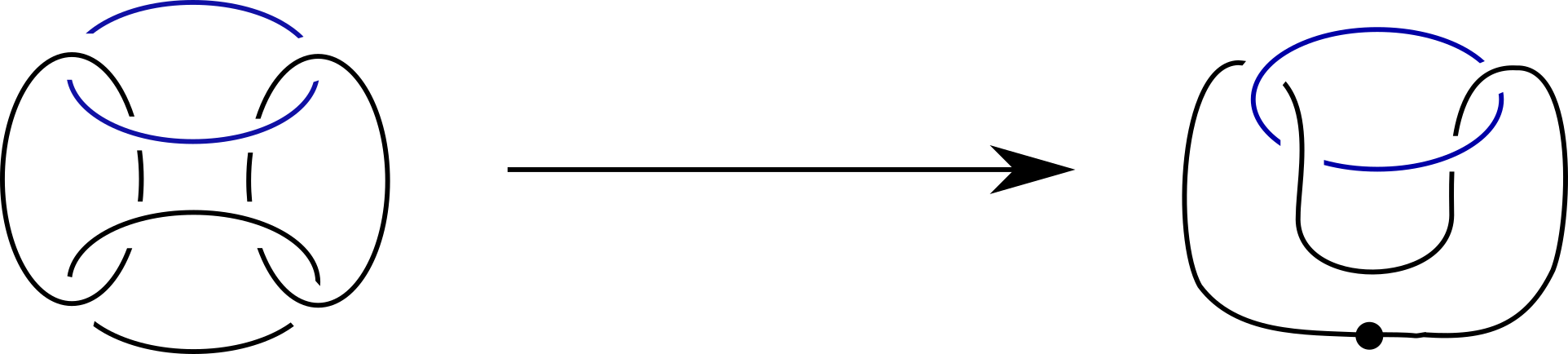}
    \put(8,23.5){\color{darkblue}$n-2$}
    \put(-7,10){$-1$}
    \put(25,10){$-1$}
    \put(10,-4){$-2$}
    \put(35,2){\shortstack[c]{blow down\\ twice and change\\\\\\$0$-framed unknot\\to dotted circle}}
    \put(86.5,22){\color{darkblue}$n$}
    \end{overpic}
    \vspace{.2cm}
    \caption{$Y$ bounds a rational homology ball.}
    \label{fig:ball}
\end{figure}

It remains to show that for each $\spinc$ structure $\mathfrak{t}$ on $Y$, there exists a $\spinc$ structure $\mathfrak{s}$ on $X$ such that $\mathfrak{s}|_Y=\mathfrak{t}$ and $c_1(\mathfrak{s})^2 + b_2(X)= 4d(Y, \mathfrak{t})$, or $c_1(\mathfrak{s})^2= 4d(Y, \mathfrak{t})-n\in\{-n,-n,-4,0\}$. Thus we need to find characteristic elements $K_1,K_2,K_3,K_4\in H^2(X)$ whose respective squares are $0,-4,-n,$ and $-n$, and whose corresponding $\spinc$ structures $\mathfrak{s}_i$ for $1\le i\le 4$ satisfy $\mathfrak{s}_i|_{Y}\neq\mathfrak{s}_j|_{Y}$ for $i\neq j$. Set
\begin{align*}
    K_1=(0,\ldots,0)^T, &\quad K_2=(2,0,\ldots,0,2)^T,\\ \quad K_3=(2,0,\ldots,0)^T, &\quad K_4=(0,2,0,\ldots,0)^T.
\end{align*}
Computing $K_i^2=K_i^TQ_X^{-1}K_i$ yields $K_1^2=0$, $K_2^2=-4$, and $K_3^2=K_4^2=-n$. 
Let $\mathfrak{s}_1,\mathfrak{s}_2,\mathfrak{s}_3,$and $\mathfrak{s}_4$ be the unique $\spinc$ structures on $X$ satisfying $c_1(\mathfrak{s}_i)=K_i$ for $1\le i\le 4$. Recall that $\spinc$ structures on $Y$ are in a one-to-one correspondence with $2H^2(X,Y)$-orbits in the set of characteristic elements in $H^2(X)$; hence if $\mathfrak{s}_i=\mathfrak{s}_j$, then $K_i-K_j\in 2\,\im(Q)$, where $\im(Q)$ is the image of $Q$, viewed as a map $H^2(X,Y)\to H^2(X)$. It is easy to check that $\frac{1}{2}Q^{-1}(K_i-K_j)\not\in \ZZ^n$ for all $i\neq j$; consequently, $\mathfrak{s}_1|_Y,\mathfrak{s}_2|_Y,\mathfrak{s}_3|_Y,$ and $\mathfrak{s}_4|_Y$ are the four distinct $\spinc$ structures on $Y$. Hence $X$ is sharp.
\end{proof}

\begin{lem} Let $n\ge2$, $a_1=3$, and $a_i=2$ for all $i\neq1$. Then $X^{0}_{\textbf{a}}$ is sharp.
\label{lem:basesharp2}
\end{lem}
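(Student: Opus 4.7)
The plan is to mirror the proof of Lemma~\ref{lem:basesharp}, modified for the fact that $a_1 = 3$ rather than $a_1 = 2$. Set $X = X^0_{\mathbf{a}}$, $Y = \partial X$, and let $Q$ denote the intersection form of $X$. Since $t = 0$, the handlebody diagram of $X$ in Figure~\ref{fig:P} reduces to a cyclic plumbing of $n$ spheres with framings $-3, -2, \ldots, -2$. A direct determinant computation gives $|\det Q| = n$, so $|H_1(Y)| = |\Spinc(Y)| = n$.

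The first main task is to compute the $d$-invariants of $Y$. Theorem~6.2(2) in~\cite{baldwinquasi} immediately yields one of them. To compute the remaining $n-1$ values, I would construct an auxiliary sharp plumbing $Z$ with $\partial Z = Y$ via a sequence of blow-ups and blow-downs in the spirit of Figure~\ref{fig:X-1sharpbase}: blow up at the $-3$-framed $2$-handle by a $+1$-framed unknot, slide across the cyclic diagram, and successively blow down $-1$-components until a negative-definite plumbing tree with no bad vertices is obtained. By~\cite{ozsvathszaboplumbed} such $Z$ is sharp, and $d(Y,\mathfrak{t})$ for every $\mathfrak{t}\in\Spinc(Y)$ can be read off as the minimum of $\tfrac{K^2 + b_2(Z)}{4}$ over characteristic vectors $K \in H^2(Z)$ in the $\Spinc$ class restricting to $\mathfrak{t}$.

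The second main task is to exhibit characteristic vectors $K_1, \ldots, K_n \in H^2(X) \cong \ZZ^n$ realising each $d$-invariant via equality in~\eqref{eq:ineq-sharp}. Since the first diagonal entry of $Q$ is $-3$ (odd) and the others are $-2$ (even), each characteristic $K$ must have odd first coordinate and even remaining coordinates. A natural ansatz, following Lemma~\ref{lem:basesharp}, is $K_1 = (1, 0, \ldots, 0)^T$ together with $K_i = (\pm1, 0, \ldots, 0, \pm2, 0, \ldots, 0)^T$ having $\pm 2$ in the $i$-th slot for $i = 2, \ldots, n$, with signs chosen so that $K_i^T Q^{-1} K_i$ equals $4 d(Y, \mathfrak{t}_i) - n$ for the appropriate $\mathfrak{t}_i$. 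Distinctness of the restrictions is then verified by checking $\tfrac{1}{2} Q^{-1}(K_i - K_j) \notin \ZZ^n$ for $i \neq j$, exactly as in the final paragraph of Lemma~\ref{lem:basesharp}.

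The hard step is the first task: the blow-up/blow-down sequence must be arranged so that the resulting plumbing is sharp for all $n$ uniformly, and one must then match each characteristic vector on $Z$ to the corresponding $\Spinc$ class of $Y$. An alternative is to recognise $Y$ as a lens space, which follows from general facts about cyclic plumbings with a single non-$(-2)$ framing, and apply the closed-form formula for $d$-invariants of lens spaces from~\cite{os-absolutelygraded}; this reduces the bookkeeping to modular arithmetic in $\ZZ/n\ZZ$. Either way, once the $d$-invariants are in hand, step two reduces to routine linear algebra with the cyclic tridiagonal matrix $Q$.
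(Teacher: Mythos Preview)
Your outline is sound, and the alternative you mention at the end is exactly the route the paper takes: it identifies $Y$ directly as the lens space $L(n,1)$ (blow up between the $-3$-framed unknot and an adjacent $-2$-framed unknot, then perform $n+1$ successive blowdowns to reach a single $n$-framed unknot), and reads off the $d$-invariants from the closed formula in~\cite{os-absolutelygraded}. This makes your primary plan of building an auxiliary sharp plumbing $Z$ unnecessary.

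For the characteristic vectors the paper uses a different family, namely the alternating fills $K_j = e_1 + \sum_{i=2}^{j}(-1)^{i-1}2e_i$ for $1\le j\le n$, rather than your single-entry ansatz $(\pm1,0,\ldots,0,\pm2,0,\ldots,0)$. Both choices hit the required target set $D=\{4i^2/n-4i-1\mid 0\le i<n\}$ and restrict to pairwise distinct $\spinc$ structures on $Y$ (for your ansatz with $K_i=e_1-2e_i$, one computes $K_i^2=-1-\tfrac{4(i-1)(n+1-i)}{n}$, which matches the value of $D$ at $k=i-1$), so either family completes the argument.
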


\begin{proof}
This follows in the same way as the proof of Lemma~\ref{lem:basesharp}. First notice that $Y^{0}_{\textbf{a}}$ is a lens space; indeed, by blowing up the obvious surgery diagram of $Y^{0}_{\textbf{a}}$ between the $-3$-framed unknot and an adjacent $-2$-framed unknot and then performing $n+1$ successive blowdowns, we obtain a surgery diagram consisting of a single unknot with framing $n$. Thus by using Proposition 4.8 in~\cite{os-absolutelygraded}, the $d$-invariants of $Y^{0}_{\textbf{a}}$ are $$\left\{\frac{-n+(2i-n)^2}{4n}\,\Big|\,0\le i< n\right\}.$$ 
As in the proof of Lemma~\ref{lem:basesharp}, we must find characteristic elements in $H^2(X_{\textbf{a}}^0)$ that square to the values in the set
$$D=\left\{\frac{4i^2}{n}-4i-1\,\Big|\,0\le i< n\right\}.$$
Consider the vectors $K_j=e_1+\sum_{i=2}^j(-1)^{i-1}2e_i$, where $1\le j\le n$ and $\{e_1,\ldots,e_n\}$ is the standard basis for $\ZZ^n$. Following as in the proof of Lemma~\ref{lem:basesharp}, it can be shown that $K_j^2\in D$ for all $j$ and that these vectors correspond to spin$^c$ structures that restrict to distinct spin$^c$ structures on $Y_{\textbf{a}}^0$. 
The result follows.
\end{proof}

\begin{definition}
Let $M$ be an oriented 3-manifold with torus boundary, and let $\gamma_0,\gamma_1,\gamma_2$ be simple closed curves in $\partial M$ 
such that
$$\#(\gamma_0\cap \gamma_1)=\#(\gamma_1\cap \gamma_2)=\#(\gamma_2\cap \gamma_0)=-1,$$ where $\#$ denotes algebraic intersection number and the orientation of $\pp M$ is induced by that of $M$.
Let $Y_i$ denote the 3-manifold obtained by gluing a solid torus to $M$ such that the meridian of the boundary of the solid torus is identified with $\gamma_i\subset \partial M$ for $i\in\{0,1,2\}$. Then $(Y_0,Y_1,Y_2)$ is called a \textit{surgery triad}.
\end{definition}

\begin{thm}[Theorem 2.2 in~\cite{ozsvathszabosharp}]
Let $(Y_0,Y_1,Y_2)$ be a surgery triad. Then there exists a long exact sequence 
$$\cdots\to HF^+(Y_0)\to HF^+(Y_1)\to HF^+(Y_2)\to\cdots$$
where the maps are induced from the obvious 2-handle cobordisms connecting $Y_i$ to $Y_{i+1}$, where $i\in\ZZ/3$.
\label{thm:triad}
\end{thm}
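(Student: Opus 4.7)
The plan is to adapt the standard proof strategy for surgery exact triangles in Heegaard Floer homology, which proceeds by encoding the triad in a single Heegaard multi-diagram, defining the maps via holomorphic triangle counts, and upgrading a chain complex to an exact sequence via a mapping cone argument. First, I would build a pointed Heegaard multi-diagram $(\Sigma, \alpha, \beta^{(0)}, \beta^{(1)}, \beta^{(2)}, z)$ in which $\alpha$ and each $\beta^{(i)}$ are $g$-tuples of attaching circles and $(\Sigma, \alpha, \beta^{(i)})$ is an admissible Heegaard diagram for $Y_i$, arranged so that $\beta^{(i)}$ and $\beta^{(i+1)}$ differ only in a single circle reflecting the filling slopes $\gamma_i$ and $\gamma_{i+1}$. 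The hypothesis that consecutive $\gamma_i$ have algebraic intersection $-1$ is what makes this triply-filled diagram available after isotopy, so that each pair $(\beta^{(i)}, \beta^{(i+1)})$ stabilises to a standard genus-$g$ Heegaard diagram for $\#^{g-1}(S^1\times S^2)$.

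Second, the cobordism-induced chain maps $\widehat{F}_i\colon CF^+(Y_i)\to CF^+(Y_{i+1})$ are defined by counting pseudo-holomorphic triangles in $\textup{Sym}^g(\Sigma)$ with boundary on $\mathbb{T}_\alpha$, $\mathbb{T}_{\beta^{(i)}}$ and $\mathbb{T}_{\beta^{(i+1)}}$, weighted by the distinguished top-degree generator $\Theta_{i,i+1}\in HF^{\leq 0}(\#^{g-1}(S^1\times S^2))$. To see that $\widehat{F}_{i+1}\circ\widehat{F}_i\simeq 0$, I would count holomorphic rectangles with boundary cycling through $\mathbb{T}_\alpha,\mathbb{T}_{\beta^{(i)}},\mathbb{T}_{\beta^{(i+1)}},\mathbb{T}_{\beta^{(i+2)}}$: one class of degenerations of these rectangles into pairs of triangles gives a chain homotopy whose total differential equals $\widehat{F}_{i+1}\circ\widehat{F}_i$, while the opposite degeneration factors through the Floer complex of the pair $(\mathbb{T}_{\beta^{(i)}}, \mathbb{T}_{\beta^{(i+2)}})$, which on the relevant generator can be shown to vanish by an explicit small-triangle count after handleslides.

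Finally, to promote this chain complex to an exact triangle, I would identify the mapping cone of $\widehat{F}_i$ with $CF^+(Y_{i+2})$ up to quasi-isomorphism. Concretely, one constructs a candidate chain map out of the mapping cone built from further triangle and quadrilateral counts, and then proves it is a quasi-isomorphism via a neck-stretching/area-filtration argument that isolates the lowest-order contributions as a tautological identification of underlying chain groups; a standard $5$-lemma then yields the long exact sequence. The main obstacle is the analytic input: one must verify transversality of all the triangle and quadrilateral moduli spaces, establish weak admissibility for every sub-diagram of the multi-diagram so that Gromov compactness applies, and carefully account for every boundary degeneration of the pseudo-holomorphic rectangles to rule out extraneous contributions to the homotopy.
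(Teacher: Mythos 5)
The paper does not prove this theorem: it is stated verbatim as Theorem~2.2 of Ozsv\'ath--Szab\'o's ``Knot Floer homology and the four-ball genus'' (cited as \texttt{ozsvathszabosharp}) and used as a black box to set up the surgery triad argument in the proof of Theorem~\ref{thm:sharp}. There is therefore no in-paper proof to compare against, and your task should really have been to recognise this as an external citation rather than something the authors establish.

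That said, as a standalone sketch your proposal does trace the standard Ozsv\'ath--Szab\'o route for the surgery exact triangle faithfully: encode the triad in a Heegaard multi-diagram $(\Sigma,\alpha,\beta^{(0)},\beta^{(1)},\beta^{(2)},z)$ with consecutive $\beta$'s differing in one curve, define cobordism maps by counting holomorphic triangles weighted by the canonical top-dimensional generator of $HF^{\leq 0}(\#^{g-1}(S^1\times S^2))$, prove $F_{i+1}\circ F_i\simeq 0$ by analysing ends of the moduli space of holomorphic rectangles, and then show the mapping cone of $F_i$ computes $HF^+(Y_{i+2})$. A few points of care: you write $\widehat{F}_i$ but then target $CF^+$, which is notationally inconsistent (the argument works for $\widehat{HF}$, $HF^+$, and $HF^-$, but the decorations should match); the statement that the unwanted rectangle degenerations ``factor through the Floer complex of $(\mathbb{T}_{\beta^{(i)}},\mathbb{T}_{\beta^{(i+2)}})$, which \ldots can be shown to vanish by an explicit small-triangle count'' is the genuinely delicate step and in the original proof uses the fact that $\Theta_{i,i+1}*\Theta_{i+1,i+2}=0$ in the relevant module structure, not merely a triangle count; and the final quasi-isomorphism is established via an area/energy filtration and a homological algebra lemma about iterated mapping cones rather than the $5$-lemma per se. None of this is a fatal flaw in the outline, but it is worth knowing that the heart of the theorem lies precisely in the two places you flag as ``standard'' rather than in the scaffolding around them.
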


\begin{prop}[Proposition 2.6 in~\cite{ozsvathszabosharp}] 
Suppose $(Y_0,Y_1,Y_2)$ is a triple of $\QQ S^3$s that form a surgery triad such that $Y_0$ and $Y_2$ are $L$-spaces. Let $W_i:Y_i\to Y_{i+1}$ denote the 2-handle cobordism connecting $Y_i$ to $Y_{i+1}$. If $-Y_2$ bounds a sharp 4-manifold $X_2$ and $X_0=X_2 \cup (-W_1) \cup (-W_0)$ is sharp, then $X_1=X_2\cup (-W_1)$ is also sharp.
\label{prop:os}
\end{prop}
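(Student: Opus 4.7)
The plan is to use the Heegaard Floer surgery exact triangle of Theorem~\ref{thm:triad} together with the standard grading-shift formula for 2-handle cobordism maps, transferring sharpness from $X_0$ to $X_1$ one spin$^c$ structure at a time.

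Fix $\mathfrak{t}_1 \in \Spinc(Y_1)$ and consider the 2-handle cobordism $-W_0 \colon Y_1 \to Y_0$ viewed as sitting inside $X_0$. Pick a spin$^c$ structure $\mathfrak{u} \in \Spinc(-W_0)$ restricting to $\mathfrak{t}_1$ and set $\mathfrak{t}_0 = \mathfrak{u}|_{Y_0}$. Sharpness of $X_0$ yields $\mathfrak{s}_0 \in \Spinc(X_0)$ with $\mathfrak{s}_0|_{Y_0} = \mathfrak{t}_0$ and $c_1(\mathfrak{s}_0)^2 + b_2(X_0) = 4\,d(Y_0, \mathfrak{t}_0)$. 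After replacing $\mathfrak{u}$ by $\mathfrak{s}_0|_{-W_0}$, set $\mathfrak{s}_1 := \mathfrak{s}_0|_{X_1}$, so that $\mathfrak{s}_1|_{Y_1} = \mathfrak{t}_1$. Using the splitting $X_0 = X_1 \cup_{Y_1} (-W_0)$, additivity of $c_1^2$, and $b_2(-W_0) = 1$, this gives the identity
\[
c_1(\mathfrak{s}_1)^2 + b_2(X_1) \;=\; 4\,d(Y_0, \mathfrak{t}_0) - c_1(\mathfrak{u})^2 - 1.
\]

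Since $-W_0$ is a negative-definite 2-handle cobordism with $\chi = 1$ and $\sigma = -1$, the map $F^+_{-W_0, \mathfrak{u}} \colon HF^+(Y_1, \mathfrak{t}_1) \to HF^+(Y_0, \mathfrak{t}_0)$ shifts the absolute grading by $(c_1(\mathfrak{u})^2 + 1)/4$. Hence whenever this map is nonzero on the bottom element of the $\ZZ[U]$-tower of $HF^+(Y_1, \mathfrak{t}_1)$, one has $d(Y_0, \mathfrak{t}_0) \le d(Y_1, \mathfrak{t}_1) + (c_1(\mathfrak{u})^2 + 1)/4$. The crux is showing that this inequality is in fact an equality for at least one choice of $\mathfrak{u}$ lifting $\mathfrak{t}_1$; granting this, substitution into the displayed identity combined with~\eqref{eq:ineq-sharp} forces $c_1(\mathfrak{s}_1)^2 + b_2(X_1) = 4\,d(Y_1, \mathfrak{t}_1)$, which is sharpness of $X_1$ at $\mathfrak{t}_1$.

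The main obstacle is precisely this last maximality step: one must show that among the lifts of $\mathfrak{t}_1$ to $\Spinc(-W_0)$ there is always one whose cobordism map sends the bottom of the $HF^+(Y_1, \mathfrak{t}_1)$ tower onto the bottom of $HF^+(Y_0, \mathfrak{t}_0)$. This is where the hypotheses on $Y_2$ and $X_2$ enter: the L-space condition on $Y_2$ together with sharpness of $X_2$ control the adjacent cobordism map $F^+_{-W_1}$ appearing in Theorem~\ref{thm:triad}, ruling out the scenario in which the grading contribution at $\mathfrak{t}_1$ is ``hidden'' behind the image of $HF^+(Y_2)$, and thereby forcing some $F^+_{-W_0, \mathfrak{u}}$ to realise the top grading shift. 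Tracking sign conventions, spin$^c$ restrictions, and the interplay of the two arms of the exact triangle is where the real computational work lies.
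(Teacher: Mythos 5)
The paper does not give a proof of this proposition: it is cited from Ozsv\'ath--Szab\'o (Proposition 2.6 of the reference), and the remark following it only notes that the statement has been adapted to match the paper's orientation conventions. So there is no in-paper proof to compare against; I can only assess your outline on its own terms.

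Your skeleton is the right one (split $X_0 = X_1 \cup_{Y_1} (-W_0)$, use additivity of $c_1^2$ and $b_2$, compare with the $d$-invariant shift across the 2-handle cobordism), but there is a genuine gap, and it is the one you yourself flag. Concretely, after invoking sharpness of $X_0$ you replace $\mathfrak{u}$ by $\mathfrak{s}_0|_{-W_0}$; but $\mathfrak{s}_0$ was only chosen to restrict to $\mathfrak{t}_0$ on $Y_0$, so there is no reason that $\mathfrak{s}_0|_{Y_1}$ equals the $\mathfrak{t}_1$ you started with. What your computation actually gives is sharpness of $X_1$ at whichever $\mathfrak{t}_1' = \mathfrak{s}_0|_{Y_1}$ happens to arise, not at an arbitrary $\mathfrak{t}_1$. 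Closing this loop --- showing that every $\mathfrak{t}_1 \in \Spinc(Y_1)$ is hit, or equivalently that for each $\mathfrak{t}_1$ one can choose a lift $\mathfrak{u}$ realising the maximal grading shift --- is exactly the content of the Ozsv\'ath--Szab\'o argument. It uses the full surgery exact triangle together with the $L$-space hypotheses on $Y_0$ and $Y_2$ and sharpness of $X_2$ to control the other arm $F^+_{-W_1}$ of the triangle, and none of that analysis appears in your writeup; you describe it qualitatively as "ruling out the scenario in which the grading contribution is hidden" but do not carry it out. There is also a sign issue to pin down in the intermediate inequality: for the argument to close you need the lower bound $d(Y_0,\mathfrak{t}_0) \ge d(Y_1,\mathfrak{t}_1) + (c_1(\mathfrak{u})^2 + 1)/4$, whereas the non-vanishing of $F^+_{-W_0,\mathfrak{u}}$ on the tower bottom only gives you the opposite inequality; getting the needed direction is again part of the structure of the OS proof. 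In short, the setup is correct, but the heart of the proof is absent.
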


\begin{remark}
Note that our orientation conventions differ from the conventions used in~\cite{ozsvathszabosharp}. As a result, we adapted the statement of Proposition 2.6 in~\cite{ozsvathszabosharp} to our conventions.
\end{remark}

Given a sequence of non-zero integers $(a_1,\ldots,a_n)$, their \emph{(Hirzebruch--Jung) continued fraction expansion} is given by  
\[
    [a_1,\ldots,a_n] = a_1 - \cfrac{1}{a_2 - \cfrac{1}{\dots - \cfrac{1}{a_n}}}.
\]
Given coprime integers $p>q\ge1$, there is a unique continued fraction expansion $[a_1,\ldots,a_n]=\frac{p}{q}$, where $a_i\ge2$ for all $i$.

\begin{proof}[Proof of Theorem~\ref{thm:sharp}]
We first assume that $t\in\{-1,0\}$.
If $n=1$, then $X^{t}_{\textbf{a}}$ is obtained by attaching a single 2-handle to $B^4$ along an unknot with framing $a_1\ge3$ (see Figure \ref{fig:P}). Hence by \cite{ozsvathszaboplumbed}, $X^{t}_{\textbf{a}}$ is sharp.

We now assume that $n\ge2$. 
We will prove sharpness by using Theorem~\ref{thm:triad}, Proposition~\ref{prop:os}, and induction. First recall that $\partial X_{\textbf{a}}^{t}$ is an $L$-space for all $\textbf{a}$. If $a_i=2$ for all $i$, then $X^{-1}_{\textbf{a}}$ is sharp by Lemma~\ref{lem:basesharp}; if $a_j=3$ for some integer $j$ and $a_i=2$ for all $i\neq j$, then $X^{0}_{\textbf{a}}$ is sharp by Lemma~\ref{lem:basesharp2} (up to cyclic reordering).

Let $\textbf{a'}=(a_1,\ldots,a_{i-1},a_i-1,a_{i+1},\ldots,a_n)$ be arbitrary and inductively assume that $X^{t}_{\textbf{a'}}$ is sharp; up to cyclic reordering, we may assume that $i=1$. 
We will show that $X^{t}_{\textbf{a}}$ is sharp, where $\textbf{a}=(a_1,\ldots,a_n)$. Let $\frac{p}{q}=[a_2,\ldots,a_n]$. We first claim that $(Y^t_{\textbf{a'}},Y^t_{\textbf{a}},L(p,q))$ forms a surgery triad. 
Let $m$ be a meridian of the $a_1-1$ surgery curve in the obvious surgery diagram of $Y^t_{\textbf{a'}}$ and let $T=\partial\nu(m)$. Then $M=Y^t_{\textbf{a'}}\setminus\mathring{\nu}(m)$ is a 3-manifold with torus boundary. Let $\gamma_2$ be the simple closed curve on $T$ that can be identified with the blackboard framing curve of $m$; let $\gamma_0$ be the simple closed curve on $T$ that bounds a disk in $\nu(m)$, oriented so that $\#(\gamma_2,\gamma_0)=-1$; and let $\gamma_1$ be the simple closed curve on $T$ satisfying $[\gamma_1]=-[\gamma_0]-[\gamma_2]\in H_2(T)$ (see Figure \ref{fig:triad}). Then $\gamma_0,\gamma_1$, and $\gamma_2$ satisfy the conditions of Theorem~\ref{thm:triad}. Moreover, using the notation of Theorem~\ref{thm:triad}, $Y_0$ is obtained by $\infty$-surgery on $m$, $Y_1$ is obtained by $1$-surgery on $m$, and $Y_2$ is obtained by $0$-surgery on $m$; hence $Y_0=Y^t_{\textbf{a}'}$, $Y_1=Y^t_{\textbf{a}}$, and $Y_2=L(p,q)$. We have thus shown that $(Y^t_{\textbf{a'}},Y^t_{\textbf{a}},L(p,q))$ forms a surgery triad.

\begin{figure}[h]
	\centering
 \begin{overpic}
 {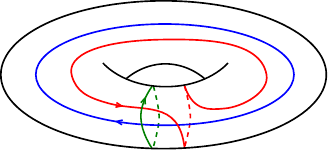}
 \put(38,2){\color{green}{$\gamma_0$}}
 \put(20,6){\color{blue}{$\gamma_2$}}
 \put(74,21){\color{red}{$\gamma_1$}}
 \end{overpic}
 \caption{Curves on $T$ defining a surgery triad.}
 \label{fig:triad}
\end{figure}

Figure \ref{fig:cobs} shows the $2$-handle cobordisms $W_i:Y_i\to Y_{i+1}$ for $i\in\{0,1\}$ inducing the long exact sequence maps in Theorem~\ref{thm:triad}. Following Section 5.5 in~\cite{gompfstipsicz}, the bottom boundary component $\partial_{-}W_i$ of $W_i$ (for $i=0,1$) has surgery diagram given by the black link and whose framings are in angle brackets. The blue framed knot denotes a 2-handle attached to $\partial_-W_i\times[0,1]$. The top boundary component $\partial_+W_i$ of $W_i$ has surgery given by the full diagram (i.e., the diagram obtained by ignoring the angle brackets). Hence it is clear, after performing blowdowns, that $\partial_-W_0=Y^t_{\textbf{a}'}$, $\partial_+W_0=Y^t_{\textbf{a}}$, $\partial_-W_1=Y^t_{\textbf{a}}$, and $\partial_+W_1=L(p,q)$, where $\frac{p}{q}=[a_2,\ldots,a_n]$. Note that $L(p,q)$ bounds a linear plumbing $X_2$ with weights $a_2,\ldots,a_n$, which is sharp by \cite{ozsvathszaboplumbed}. 
We claim that $X_{\textbf{a}}^{t}=(-W_1)\cup X_2$. If we flip the handlebody diagram of $W_1$ upside down and reverse its orientation, we obtain the first handlebody diagram in Figure \ref{fig:-cob1} (cf.~Section 5.5 in~\cite{gompfstipsicz}). Blowing down the first $\langle 1\rangle$-framed unknot yields the next diagram in Figure \ref{fig:-cob1}. Finally, after sliding the $-1$-framed blue $2$-handle over the $\langle a_1+1\rangle$-framed unknot, we obtain the last diagram in Figure \ref{fig:-cob1}. With this description, it is clear that $X_{\textbf{a}}^t=(-W_1)\cup X_2$. 

\begin{figure}
    \centering
    \begin{overpic}
    [scale=.5]{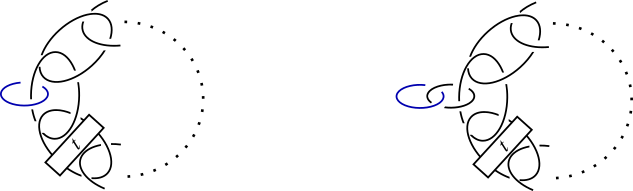}
    \put(18,12){\huge$W_0$}
    \put(-2,12){\color{darkblue}$1$}
    \put(-3,2){$\langle -a_n\rangle$}
    \put(-15,19){$\langle -(a_1-1)\rangle$}
    \put(-1,26){$\langle -a_2\rangle$}
    \put(85,12){\huge$W_1$}
    \put(63,9.5){\color{darkblue}$1$}
    \put(67.5,9.5){$\langle1\rangle$}
    \put(65,2){$\langle -a_n\rangle$}
    \put(53,19){$\langle -(a_1-1)\rangle$}
    \put(67,26){$\langle -a_2\rangle$}
    \end{overpic}
    \caption{The cobordisms $W_0$ and $W_1$.}
    \label{fig:cobs}
\end{figure}

Next, consider the handlebody diagram for $-W_0$ as shown in the left side of Figure~\ref{fig:-cob0}. Blowing down the $\langle 1\rangle$-framed unknot yields the right handlebody diagram for $-W_0$ shown in Figure \ref{fig:-cob0}. Notice that the bottom boundary of $-W_0$ is $\partial X_2$; indeed if we remove the $-1$-framed $2$-handle, we are left with the surgery diagram for $\partial X_{\textbf{a}}^{t}$.  Let $X_0:=(-W_0)\cup (-W_1)\cup X_2$; note that $X_0$ has the handlebody diagram given by the right diagram in Figure \ref{fig:-cob0}, except with the brackets removed from the framings. It is thus clear that $X_0=X_{\textbf{a'}}^{t}\#\overline{\mathbb{CP}^2}$. 
By the inductive hypothesis, $X_{\textbf{a'}}^{t}$ is sharp (see, for example, \cite{ozsvathszabosharp}); hence $X_0$ is also sharp. Thus by Proposition~\ref{prop:os}, $X_{\textbf{a}}^t$ is sharp.

Now let $t$ be arbitrary. Notice that for fixed $\textbf{a}$, the 4-manifolds $X^{2k+1}_{\textbf{a}}$ (for $k\in\mathbb{Z}$) all have the same intersection form and, similarly, the 4-manifolds $X^{2k}_{\textbf{a}}$ (for $k\in\mathbb{Z}$) all have the same intersection form.
In~\cite{baldwinquasi}, Baldwin considers the $\spinc$ structure $\mathfrak{t}_0$ on $Y^t_{\textbf{a}}$ associated to a certain contact structure. In particular, he shows in Theorem 6.2 in~\cite{baldwinquasi} that 
\[
d(Y^t_{\textbf{a}},\mathfrak{t}_0)=
\begin{cases}
(3n-\sum_{i=1}^na_i)/4 \hspace{.2cm}\text{ if } \hspace{.2cm}t\text{ is even}\\
-1+(3n-\sum_{i=1}^na_i)/4 \hspace{.2cm}\text{ if } \hspace{.2cm}t<0\text{ is odd}\\
1+(3n-\sum_{i=1}^na_i)/4 \hspace{.2cm}\text{ if } \hspace{.2cm}t>0\text{ is odd}.\\
\end{cases}
\]
Moreover,  by the remarks preceding Proposition 5.1 in~\cite{baldwinquasi}, for all $i\in\mathbb{Z},$ we have the following relationship between $d-$invariants.
$$\left\{d(Y^{t}_{\textbf{a}},\mathfrak{t})\,\middle|\,\mathfrak{t}\ne\mathfrak{t}_0\right\}=\left\{d(Y^{t+2i}_{\textbf{a}},\mathfrak{t})\,\middle|\,\mathfrak{t}\ne\mathfrak{t}_0\right\}.$$

Now consider $X^{2k+1}_{\textbf{a}}$. Since $X^{-1}_{\textbf{a}}$ is sharp, it follows that $$d(Y^{-1}_{\textbf{a}},\mathfrak{t}_0)=\max_{\substack{\mathfrak{s}\in\spinc(X^{-1}_{\textbf{a}})\\\mathfrak{s}|_Y=\mathfrak{t}_0}}\frac{c_1(\mathfrak{s})^2+n}{4}.$$ 
Since the intersection form of $X^{2k+1}_{\textbf{a}}$ is the same as the intersection form of $X^{-1}_{\textbf{a}}$ for all $k$, it follows that $X^{2k+1}_{\textbf{a}}$ is sharp if and only if 
$$\left\{d(Y^{-1}_{\textbf{a}},\mathfrak{t})\,\middle|\,\mathfrak{t}\in\text{Spin}^c(Y_\textbf{a}^{-1})\right\}=\left\{d(Y^{2k+1}_{\textbf{a}},\mathfrak{t})\,\middle|\,\mathfrak{t}\ne\in\text{Spin}^c(Y_\textbf{a}^{2k+1})\right\}.$$
By the $d-$invariant calculations given above, verifying this equality reduces to  verifying 
$d(Y^{-1}_{\textbf{a}},\mathfrak{t}_0)=d(Y^{2k+1}_{\textbf{a}},\mathfrak{t}_0)$. This occurs if and only if $k<0$. Hence $X^{2k+1}_{\textbf{a}}$ is sharp if and only if $k<0$.
A similar argument shows that $X^{2k}_{\textbf{a}}$ is sharp for all $k$.
\end{proof}

\begin{figure}
    \centering
    \begin{overpic}
    [scale=.5]{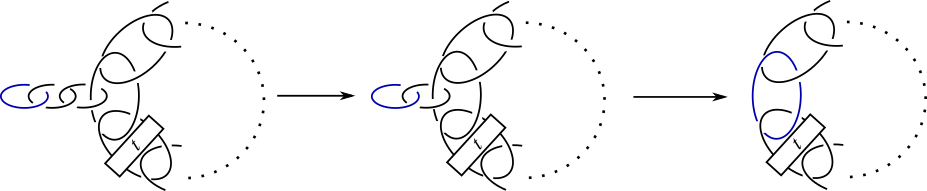}
    \put(17,8){\huge$-W_1$}
    \put(0,6){\color{darkblue}$0$}
    \put(3,6){$\langle 1\rangle$}
    \put(7,6){$\langle 1\rangle$}
    \put(4,1){$\langle -a_n\rangle$}
    \put(-3,13){$\langle -(a_1-1)\rangle$}
    \put(6,18){$\langle -a_2\rangle$}
    \put(30.5,4.5){\shortstack[c]{blow\\down}}
    
    \put(54,8){\huge$-W_1$}
    \put(39,6){\color{darkblue}$-1$}
    \put(43,6){$\langle 0\rangle$}
    \put(41,1){$\langle -a_n\rangle$}
    \put(33.5,13){$\langle -(a_1-1)\rangle$}
    \put(43,18){$\langle -a_2\rangle$}
     \put(66,4.5){\shortstack[c]{slide\\and cancel}}
     
    \put(88,8){\huge$-W_1$}
    \put(76.5,1){$\langle -a_n\rangle$}
    \put(76.5,13){\color{darkblue}{$-a_1$}}
    \put(78,18){$\langle -a_2\rangle$}
    \end{overpic}
  \caption{The cobordism $-W_1$.}
    \label{fig:-cob1}
\end{figure}

\begin{figure}
\centering
    \begin{overpic}
    [scale=.5]{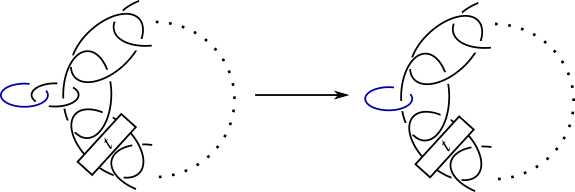}
    \put(22,14){\huge$-W_0$}
    \put(0,10.5){\color{darkblue}$0$}
    \put(4,10.5){$\langle 1\rangle$}
    \put(2,3){$\langle -a_n\rangle$}
    \put(-11,21){$\langle -(a_1-1)\rangle$}
    \put(3.5,27){$\langle -a_2\rangle$}
    \put(47,7){\shortstack[c]{blow\\down}}
    
    \put(82,14){\huge$-W_0$}
    \put(60,11){\color{darkblue}$-1$}
    \put(59.5,3){$\langle -a_n\rangle$}
    \put(59.5,21){$\langle -a_1\rangle$}
    \put(63,27){$\langle -a_2\rangle$}
    \end{overpic}
    \caption{The cobordism $-W_0$.}
    \label{fig:-cob0}
\end{figure}

\section{Good, Standard, and Cyclic Subsets}
\label{sec:latticebasic}

In this section we establish some fundamental definitions pertaining to several classes of finite subsets of $\ZZ^n$ that shall be used in the following sections. Consider the standard negative-definite intersection lattice $(\ZZ^n,-I)$ and let $\{e_1,\ldots,e_n\}$ be an orthonormal basis of $\ZZ^n$. Then, with respect to the product $\cdot$ given by $-I$, we have $e_i\cdot e_j=-\delta_{ij}$ for all $i,j$; unless indicated otherwise, we use this product in the remainder of the paper. We begin by recalling definitions and results from~\cite{lisca-rational} and~\cite{simone2020classification}.

Given a subset $S=\{v_1,\ldots,v_n\}\subset\ZZ^n$, the \emph{intersection graph} of $S$ is the weighted graph consisting of a vertex with weight $v_i\cdot v_i$ for each vector $v_i$, and an edge labeled $v_i\cdot v_j$ between each pair of vertices $v_i$ and $v_j$ with $v_i\cdot v_j\neq0$.
We consider two subsets $S_1, S_2\subset\ZZ^n$ to be the same if $S_2$ can be obtained by applying an element of $\Aut\ZZ^n$ to $S_1$.
Let $S=\{v_1,\ldots,v_n\}\subset\ZZ^n$ be a subset. 
We call the string of integers $(a_1,\ldots,a_n)$ defined by $a_i=-v_i\cdot v_i$ the \textit{string associated} to $S$.
Two vectors $z,w\in S$ are called \textit{linked} if there exists $e\in \ZZ^n$ such that $e\cdot e=-1$ and $z\cdot e, w\cdot e\neq0$. A subset $S$ is called \textit{irreducible} if for every pair of vectors $v,w\in S$, there exists a finite sequence of vectors $v_1=v,v_2,\ldots, v_k=w\in S$ such that $v_i$ and $v_{i+1}$ are linked for all $1\le i\le k-1$; otherwise $S$ is called \emph{reducible}.

\begin{definition} A subset $S=\{v_1,\ldots,v_n\}\subset\mathbb{Z}^n$ is:
	\begin{itemize}
		\item \textit{good} if it is irreducible and $v_i\cdot v_j=
		\begin{cases}
		-a_i\le-2 & \text{if } i=j \\
		0 \text{ or } 1 & \text{if } |i-j|=1 \\
		0 & \text{otherwise;}
		\end{cases}
		$
		\vspace{.2cm}
		\item \textit{standard} if $v_i\cdot v_j= 
		\begin{cases}
		-a_i\le-2 & \text{if } i=j \\
		1 & \text{if } |i-j|=1 \\
		0 & \text{otherwise.}
		\end{cases}
		$
\end{itemize}
\end{definition}

\begin{definition} A subset $S=\{v_1,\ldots,v_n\}\subset\mathbb{Z}^n$ is:
\begin{itemize}
\item  \textit{negative cyclic} if either 
\begin{itemize}
      \item[(1)] $n=2$ and $v_i\cdot v_j=\begin{cases}
      -a_i\le-2 & \text{if } i=j \\
      0 & \text{if } i\neq j \\
      \end{cases}$ \hspace{.5cm} or \vspace{.2cm}
      \item[(2)] $n\ge3$ and there is a cyclic reordering of $S$ such that\\ $v_i\cdot v_j=\begin{cases}
      -a_i\le-2 & \text{if } i=j \\
      1 & \text{if } |i-j|=1 \\
      -1 & \text{if } i\neq j\in\{1,n\}\\
      0 & \text{otherwise;}
      \end{cases}$  \vspace{.2cm}
\end{itemize}
\item  \textit{positive cyclic} if $-a_k\le -3$ for some $k$ and either 
\begin{itemize}
      \item[(1)] $n=2$ and $v_i\cdot v_j=\begin{cases}
      -a_i\le-2 & \text{if } i=j \\
      2 & \text{if } i\neq j \\
      \end{cases}$ \hspace{.5cm} or \vspace{.2cm}
      \item[(2)] $n\ge3$ and there is a cyclic reordering of $S$ such that\\ $v_i\cdot v_j=\begin{cases}
      -a_i\le-2 & \text{if } i=j \\
      1 & \text{if } |i-j|=1 \\
      1 & \text{if } i\neq j\in\{1,n\}\\
      0 & \text{otherwise;}
      \end{cases}$   \vspace{.2cm}
\end{itemize}
\item \textit{cyclic} if $S$ is negative or positive cyclic. 
\end{itemize}
Finally, the indices of vertices are understood to be defined modulo $n$ (e.g., $v_{n+1}=v_1$).
\end{definition}

In the following, we will say that a subset is cubiquitous to mean that it generates a cubiquitous sublattice of $\ZZ^n$. Recall that a unit cube $C$ in $\ZZ^n$ is of the form $C=x+\{0,1\}^n$, where $x\in\ZZ^n$. 
Given two vectors $x,y\in\RR^n$, we define $d(x,y)$ to be the Euclidean distance between $x$ and $y$. Moreover, $||x||$ denotes the length of $x$ and $\langle x,y\rangle$ denotes the standard positive-definite inner product on $\RR^n$. 

Let $S=\{v_1,\ldots,v_n\}$ be a good or cyclic subset with associated string $(a_1,\ldots,a_n)$. Following \cite{lisca-3braids}, we call $$W=\sum_{i=1}^n v_i$$ the \textit{Wu element} of $S$. 
Following \cite{lisca-rational}, we define the integer $I(S)$ to be
$$I(S):=\displaystyle\sum_{i=1}^n(a_i -3).$$

\begin{remark} It is easy to check that if $S$ is a cyclic subset with associated string $\textbf{a}$ and $S^*$ is a cyclic subset whose associated string $\textbf{d}$ is the cyclic dual of $\textbf{a}$, then $I(S)+I(S^*)=0$. 
    \label{rem:Isum}
\end{remark}

\begin{thm} Let $S=\{v_1,\ldots,v_n\}\subset \ZZ^n$ be a good or cyclic subset with $I(S)>0$ and whose Wu element is of the form $W=\sum_{i=1}^nk_i e_i$, where $k_i$ is odd for all $i$. Then $S$ is not cubiquitious.
    \label{thm:Wodd}
\end{thm}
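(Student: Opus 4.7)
The plan is to argue by contradiction. Suppose $S$ is cubiquitous; by Proposition~3.2, every coset of $L := \mathrm{span}_\ZZ(S) \subset \ZZ^n$ then meets the unit cube $\{0,1\}^n$. The first move is to apply this to the integer vector $x_0 := \tfrac{1}{2}(W + \mathbf{1})$, where $\mathbf{1} = (1,\dots,1)$; this lies in $\ZZ^n$ because every $k_i$ is odd. Cubiquity yields $\epsilon \in \{0,1\}^n$ and $v \in L$ with $v = x_0 - \epsilon$, and setting $\delta := \mathbf{1} - 2\epsilon \in \{-1,+1\}^n$ rearranges this to
\[ W \;=\; 2v + \delta. \]
In particular $\delta = W - 2v \in L$, so $L$ must contain some vector all of whose coordinates are $\pm 1$, and this $\delta$ additionally satisfies $\tfrac{1}{2}(W-\delta) \in L$.

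Next, I would extract an arithmetic restriction. Computing $W \cdot W$ via the decomposition above gives $W \cdot W = 4(v \cdot v) + 4(v \cdot \delta) - n$, while the Gram data of $S$ yields $W \cdot W = -\sum a_i + 2\sigma_S$, where $\sigma_S \in \{n-2,\,n-1,\,n\}$ depending on whether $S$ is negative cyclic, good (in particular standard), or positive cyclic. Substituting $\sum a_i = I(S) + 3n$ gives $W \cdot W + n = -I(S) + 2\sigma_S - 2n$, which is at most $-I(S)$; since each $k_i$ is odd we also have $W \cdot W + n = -\sum(k_i^2 - 1) \equiv 0 \pmod 8$. Combined with $I(S) > 0$, this forces $\sum(k_i^2 - 1) \geq 8$, so some coordinate satisfies $|k_{j_0}| \geq 3$.

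The main obstacle, and what I expect to be the technical heart of the argument, is to convert the existence of $\delta \in L \cap \{\pm 1\}^n$ with $\tfrac{1}{2}(W - \delta) \in L$ into a concrete contradiction with these numerical constraints. The strategy is to expand $\delta = \sum c_i v_i$ in the generators of $L$; the near-tridiagonal form of the Gram matrix of $S$ converts each pairing $\delta \cdot v_j = \sum_i c_i (v_i \cdot v_j)$ into a linear equation in the $c_i$ with coefficients from $\{0,\pm 1, -a_i\}$, which tightly constrains the $c_i$ modulo $2$. Comparing these parity conditions with the expansion $W = \sum v_i$, in which every generator appears with coefficient $1$, should produce a mismatch at the index $j_0$ where $|k_{j_0}| \geq 3$, forcing some parity relation between $a_{j_0}$ and the neighbouring $c_i$ that cannot be satisfied simultaneously with $\delta_{j_0} = \pm 1$. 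I anticipate the bookkeeping will split into parallel cases for good, positive cyclic, and negative cyclic subsets, the cyclic closure relation providing the crucial extra equation needed to finish the cyclic cases.
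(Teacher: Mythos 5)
You have correctly identified the right unit cube to use (the one with centroid $\tfrac{1}{2}W$, equivalently corner $\tfrac{1}{2}(W+\mathbf{1})$, which is integral because every $k_i$ is odd), and the reformulation that cubiquity would give $v\in L$ with $\delta := W-2v\in\{\pm1\}^n$ is correct and is the same starting point as the paper's proof. But there is a genuine gap: you then compute the wrong invariant. The paper's argument computes $\langle\delta,\delta\rangle$ (Euclidean norm squared) directly and shows it exceeds $n$, which is immediately incompatible with $\delta\in\{\pm1\}^n$. Concretely, writing $\delta=\sum_i(1-2m_i)v_i$ with $v=\sum_i m_iv_i$ and expanding via the Gram matrix of $S$, in the negative cyclic case one gets
\begin{equation*}
\langle\delta,\delta\rangle = \sum_{i=1}^n(2m_i-1)^2(a_i-2) + 4\sum_{i=1}^{n-1}(m_i-m_{i+1})^2 + 4(m_1+m_n-2)(m_1+m_n),
\end{equation*}
and a short case check shows this is $\ge \sum_i(a_i-2) = n + I(S) > n$. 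That is the entire proof. Your computation of $W\cdot W$ and the congruence $\sum(k_i^2-1)\equiv 0\pmod 8$, while true, do not by themselves produce a contradiction (they merely force $\sum k_i^2 \ge n+8$), and the proposed parity analysis of the coefficients $c_i$ is left entirely speculative; you acknowledge this yourself by calling it ``the main obstacle'' and saying you ``expect'' and ``anticipate.'' In particular, the claimed value of $\sigma_S$ is not well-defined for good (as opposed to standard) subsets, where the adjacent pairings may be $0$ or $1$. So the proposal has the correct geometric setup but replaces the one-line norm estimate that finishes the argument with an unexecuted and not obviously viable alternative.
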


\begin{proof}
Let $z=\frac{1}{2}W$ and let $C$ be the unit cube with centroid $z$. Then for every vector $y\in C$, $d(y,z)^2=\frac{n}{4}$.
Let $x\in \Lambda$, where $\Lambda$ is the lattice generated by $S$. Then $x=\sum_{i=1}^nx_iv_i$, for some integers $x_i$. We will show that $x\not\in C$ by showing that $d(x,z)^2>\frac{n}{4}$.

Let $a_i=\langle v_i, v_i\rangle$. Then
\begin{align*}
    d(x,z)^2&=\left|\left|\sum_{i=1}^{n}x_iv_i-\sum_{i=1 }^n\frac{1}{2}v_i\right|\right|^2\\
    &=\left|\left|\sum_{i=1}^{n}(x_i-\frac{1}{2})v_i\right|\right|^2\\
    &=\sum_{i=1}^n \frac{(2x_i-1)^2}{4}a_i+\sum_{i=1}^{n}\frac{(2x_i-1)(2x_{i+1}-1)}{2}\langle v_i,v_{i+1}\rangle,
\end{align*}
where it is understood that $n+1=1$. 
We will now prove the result when $S$ is negative cyclic; the proofs of the positive cyclic and good cases are similar. Then $\langle v_i,v_{i+1}\rangle =-1$ for all $1\le i\le n-1$ and $\langle v_n,v_1\rangle=1$. Hence,
\begin{align*}
    d(x,z)^2&=\sum_{i=1}^n \frac{(2x_i-1)^2}{4}a_i+\frac{(2x_1-1)(2x_n-1)}{2}-\sum_{i=1}^{n-1}\frac{(2x_i-1)(2x_{i+1}-1)}{2}\\
    &=\sum_{i=1}^n \frac{(2x_i-1)^2}{4}(a_i-2) +\sum_{i=1}^{n-1}(x_i-x_{i+1})^2 +(x_1+x_n-2)(x_1+x_n)+1.
\end{align*}
Note that $(2x_i-1)^2\ge1$ for all $i$ and $(x_1+x_n-2)(x_1+x_n)\ge-1$. Since $\sum_{i=1}^na_i=3n+I(S)$, it follows that 
\begin{align*}
    d(x,z)^2&\ge \sum_{i=1}^n \frac{a_i-2}{4}= \frac{(\sum_{i=1}^n a_i)-2n}{4}=\frac{n+I(S)}{4}>\frac{n}{4}.
\end{align*}
It follows that $x\not\in C$ and so $S$ is not cubiquitous. 
\end{proof}

\section{Proof of Theorem~\ref{thm:1}}\label{sec:proof1}

Recall that $\mathcal{S}_{1a} = \{ (b_1, \dots, b_k, 2, c_l, \dots, c_1, 2) : k + l \ge3 \}$, where $(b_1, \dots, b_k)$ and $(c_1, \dots, c_l)$ are linear duals. It is straightforward to show that
\[
    \mathcal{S}_{1a}^* =\{  (c_1 + b_1, b_2, \dots, b_{k-1}, b_{k}+ c_{l}, c_{l-1}, \dots, c_2) : k + l \ge 3 \}.
\]
Note that by Lemma 4.2 in \cite{simone2020classification}, for the unique minimal length string $\mathbf{a} = (3,2,2,2,2)\in \mathcal{S}_{1a}$, $Y_{\mathbf{a}}^{1}$ does not bound a $\QQ B^4$. Hence we will restrict to elements of $\mathcal{S}_{1a}$ with length at least six and consequently restrict to strings of $\mathcal{S}_{1a}^*$ with length at least two.

Let $\textbf{a}\in \mathcal{S}_{1a}$ and let $\textbf{d}\in \mathcal{S}_{1a}^*$ be its cyclic dual. 
Following the notation in Section~\ref{sec:sharpdbc}, let $X^t_{\textbf{a}}$ denote the negative-definite 4-manifold bounded by $Y^t_{\textbf{a}}$, where $t$ is odd, shown in~Figure~\ref{fig:P}; recall that $t$ indicates the number of half-twists. Endow $H_2(X_{\textbf{a}}^t)$ with a basis given by the 2-handles of $X_{\textbf{a}}^t$ and let $Q$ denote its intersection form. By the lattice analysis completed in Section 6 of \cite{simone2020classification}, there exists a unique lattice embedding $(H_2(X_{\textbf{a}}^t),Q)\to (\ZZ^n,-I)$ (up to composing with an element of $\Aut\ZZ^n$), where $n=\rk(H_2(X_{\textbf{a}}^t))$. 
Moreover, by Theorem~1.7 in \cite{simone2020classification}, $Y_{\textbf{a}}^{-1}$ bounds a $\QQ B^4$, as does $-Y_{\textbf{a}}^{-1}=Y_{\textbf{d}}^{1}$. Our goal is to show that $Y^{1}_{\textbf{a}}$ does not bound a $\QQ B^4$; in fact, we will show that $Y^t_{\textbf{a}}$ does not bound a $\QQ B^4$ for all odd $t>0$.

We first define an intermediate set of strings that we will find useful. Let
\[
    \mathcal{L} = \{ (b_1, b_2, \dots, b_{k-1}, b_{k}+ c_{l}, c_{l-1}, \dots, c_1)\},
\]
where $(b_1, \dots, b_k)$ and $(c_1, \dots, c_l)$ are linear duals.

\begin{lem}
$(d_1,\ldots,d_n)\in\mathcal{L}$ if and only if $(2,d_1,\ldots,d_{n-1},d_n+1)\in\mathcal{L}$ and $(d_1+1,d_2,\allowbreak\ldots,d_{n},2)\in\mathcal{L}$. 
\label{lem:expansions}
\end{lem}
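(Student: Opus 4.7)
The plan is to reduce the lemma to an iff statement about how linear duals transform under two elementary operations: prepending a $2$ to a string and incrementing its first entry. Specifically, the key sub-claim I would establish is: \emph{two strings $(b_1,\ldots,b_k)$ and $(c_1,\ldots,c_l)$ are linear duals if and only if $(b_1+1, b_2,\ldots,b_k)$ and $(2,c_1,\ldots,c_l)$ are linear duals}. Since reversing a string commutes with taking the linear dual (itself checked from the standard form), an analogous statement holds for incrementing the last entry of $(b)$ and appending $2$ to $(c)$.

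To prove the sub-claim I would proceed by a case analysis on the standard form $(b) = (2^{[x_1]}, 3+y_1, 2^{[x_2]}, \ldots, 2^{[x_m]}, 2+y_m)$. If $x_1 \ge 1$, then $b_1 = 2$ and incrementing opens a new initial block so that the standard-form parameters become $(0,0,x_1-1,y_1,x_2,y_2,\ldots)$; if $x_1 = 0$, then $b_1 \ge 3$ and incrementing simply replaces $y_1$ by $y_1+1$, with minor adjustments when $m=1$. In both cases, reading off the linear dual directly from the definition shows that exactly one additional $2$ is prepended to $(c)$ and the remaining entries are unchanged. The inverse direction follows from the same analysis applied in reverse; note that a linear dual pair whose second member starts with $2$ is forced by the standard-form formula to have first member with $b_1 \ge 3$, which is what allows the decrement.

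Granted the sub-claim, the forward direction of the lemma is immediate. Writing $(d_1,\ldots,d_n)=(b_1,\ldots,b_{k-1},b_k+c_l,c_{l-1},\ldots,c_1)$ for some linear duals $(b),(c)$, operation B produces
\[
    (b_1+1, b_2,\ldots, b_{k-1}, b_k + c_l, c_{l-1},\ldots, c_1, 2),
\]
which is exhibited as an element of $\mathcal{L}$ by the pair of linear duals $(b_1+1,b_2,\ldots,b_k)$ and $(2,c_1,\ldots,c_l)$; operation A is handled by the reversal-symmetric analogue. For the backward direction, suppose that the output of operation B equals the element of $\mathcal{L}$ associated to some linear duals $(B),(C)$. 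Its terminal entry is $C_1 = 2$, which forces $B_1 \ge 3$. Applying the inverse direction of the sub-claim strips the leading $2$ from $(C)$ and decrements $B_1$ by one, producing a pair of linear duals $(B'),(C')$ whose associated element of $\mathcal{L}$ is precisely $(d_1,\ldots,d_n)$. The backward direction for operation A is analogous, this time using the leading $2$ of the string (which must coincide with $B_1$ in any valid decomposition) to strip a leading $2$ from $(B)$ and decrement $C_1$.

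The main obstacle is bookkeeping in the case analysis underlying the sub-claim: one must carefully track the degenerate situations where $m=1$ or the string consists entirely of $2$'s, interpreting the conventions $(2^{[k]})\leftrightarrow(k+1)$ and $(1)\leftrightarrow\varnothing$ from Section~\ref{sec:qa3bc} consistently throughout. Once these boundary cases are settled, the remaining manipulations amount to matching standard forms on either side of the dual.
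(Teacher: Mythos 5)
Your proposal is correct and takes essentially the same approach as the paper: both reduce the lemma to the observation that $(b_1,\ldots,b_k)$ and $(c_1,\ldots,c_l)$ are linear duals if and only if $(b_1+1,b_2,\ldots,b_k)$ and $(2,c_1,\ldots,c_l)$ are (equivalently, $(2,b_1,\ldots,b_k)$ and $(c_1+1,c_2,\ldots,c_l)$), and then read off the lemma from the definition of $\mathcal{L}$. The paper simply asserts this sub-claim ``by definition'' and stops, whereas you sketch the standard-form case analysis and the degenerate cases needed to actually verify it.
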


\begin{proof}
    By definition, $(b_1, \dots, b_{k})$ and $(c_1,\ldots,c_l)$ are linear duals if and only if $(2,b_1,\ldots, b_k)$ and $(c_1+1,c_2,\ldots,c_l)$ are linear duals (or equivalently, $(b_1+1,b_2,\ldots, b_k)$ and $(2,c_1,\ldots,c_l)$ are linear duals). The result follows.
\end{proof}

\begin{lem}
    $X^{1}_{\textbf{d}}$ embeds in $m\overline{\mathbb{CP}^2},$ where $m$ is the length of $\textbf{d}\in \mathcal{S}_{1a}^*$, such that its complement is a $\QQ B^4$. Moreover, the total homology class of $X^{1}_{\textbf{d}}$ (i.e., the sum of the homology classes of the 2-handles of $X^{1}_{\textbf{d}}$) has only odd coefficients in the standard basis of $H_2(m\overline{\mathbb{CP}^2}).$
    \label{lem:embed}
\end{lem}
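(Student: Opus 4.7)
The plan is to construct the embedding by gluing $X^1_\textbf{d}$ to an explicit rational homology ball bounding $Y^1_\textbf{d}$, and then track the induced homology data by induction on the length of $\textbf{d}$.

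First, by Theorem 1.7 of \cite{simone2020classification}, $Y^{-1}_\textbf{a} \cong -Y^1_\textbf{d}$ bounds an explicit $\QQ B^4$, call it $Z$; gluing $X^1_\textbf{d}$ to $-Z$ along their common boundary produces a closed, smooth 4-manifold $W$ whose rational intersection form is negative-definite of rank $m$. Using that $X^1_\textbf{d}$ is simply connected (visible from the handle diagram in Figure \ref{fig:P}) together with the fact that $\pi_1(Z)$ is normally generated by the image of $\pi_1(\partial Z)$, $W$ is simply connected. By Donaldson's theorem its intersection form is the standard diagonal $-I_m$, and appealing to the specific Kirby structure of $Z$ from \cite{simone2020classification} one can perform explicit handle cancellations and blowdowns to identify $W$ smoothly with $m\overline{\mathbb{CP}^2}$. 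This yields the desired embedding $X^1_{\textbf{d}} \hookrightarrow m\overline{\mathbb{CP}^2}$ with $\QQ B^4$ complement $-Z$.

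For the parity of coefficients in the total homology class, I induct on the length $m$ of $\textbf{d}$, viewed cyclically as an element of $\mathcal{L}$. In the base case (the shortest elements of $\mathcal{S}_{1a}^*$, of length two), the embedding is computed directly and both coefficients of the total class are seen to be odd. For the inductive step, Lemma \ref{lem:expansions} says that any $\textbf{d}$ of length $m$ arises from some $\textbf{d}' \in \mathcal{L}$ of length $m-1$ either by prepending a $2$ (and incrementing the last entry) or by appending a $2$ (and incrementing the first entry). Geometrically, each such operation corresponds to adding a single $(-2)$-framed $2$-handle to $X^1_{\textbf{d}'}$ together with a compensating modification of one adjacent framing, which in turn is realized in the ambient manifold by a single blowup of $(m-1)\overline{\mathbb{CP}^2}$ and a handle slide. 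Choosing $e_m$ to be the basis vector introduced by the blowup, I then verify that the class of the new $2$-handle has odd $e_m$-coefficient, while the old $2$-handles pick up only even $e_m$-corrections; in the remaining directions $e_1, \dots, e_{m-1}$, the coefficients of the total class are unchanged and hence remain odd by the inductive hypothesis.

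The main obstacle is the careful case analysis required in the inductive step: depending on whether $\textbf{d}$ is obtained by prepending or by appending, and on the entries of $\textbf{d}'$ at the affected end, the specific Kirby moves and the induced change in the homology class differ. In each case, however, the new $e_m$-coefficient of the total class is forced to be odd because a single blowup contributes $\pm e_m$ with coefficient $\pm 1$; the rest of the argument is a bookkeeping exercise to check that all the other parities are preserved. Together these ingredients establish both the existence of the embedding and the desired odd-parity property.
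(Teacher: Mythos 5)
Your first paragraph contains a detour that ultimately cannot be avoided and a gap in its conclusion. Gluing $X^1_{\mathbf{d}}$ to $-Z$ and invoking Donaldson's theorem only identifies the intersection form of $W$ with $-I_m$; to get the claimed smooth embedding of $X^1_{\mathbf{d}}$ into $m\overline{\mathbb{CP}^2}$ you need a diffeomorphism $W \cong m\overline{\mathbb{CP}^2}$, which Donaldson/Freedman do not provide. You acknowledge the remaining ``explicit handle cancellations and blowdowns'' but do not perform them. Since you will in any case need to track the homology classes of the $2$-handles of $X^1_{\mathbf{d}}$ through an explicit Kirby construction for the parity claim, the Donaldson step is redundant: the paper dispenses with it entirely and obtains the embedding and the homology data simultaneously, directly by blowups (and one handleslide-and-blowdown), starting from a $-4$-sphere in $\overline{\mathbb{CP}^2}$.

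The main gap is in the induction for the parity claim. You induct on $\mathbf{d}\in\mathcal{S}_{1a}^*$ ``viewed cyclically as an element of $\mathcal{L}$,'' but these are genuinely different sets: $\mathcal{L}$ consists of strings of linear (that is, chain-shaped) plumbings, while $\mathbf{d}\in\mathcal{S}_{1a}^*$ indexes cyclic plumbings $X^1_{\mathbf{d}}$ whose intersection form has a nontrivial pairing between the first and last basis vectors. Lemma~\ref{lem:expansions} gives expansion moves internal to $\mathcal{L}$ (prepend or append a $2$ and increment the opposite endpoint); these moves do not carry strings in $\mathcal{S}_{1a}^*$ to strings in $\mathcal{S}_{1a}^*$, and a single blowup in the ambient $\overline{\mathbb{CP}^2}$'s realizes the $\mathcal{L}$-expansion on a \emph{linear} chain of spheres, not the cyclic operation your inductive step needs. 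The paper's proof avoids this by running the induction entirely within $\mathcal{L}$, showing that each linear plumbing $P$ with associated string $\mathbf{c}\in\mathcal{L}$ of length $k+l-1$ embeds in $(k+l-1)\overline{\mathbb{CP}^2}$ with total class having \emph{precisely one even} coefficient, namely that of the last-introduced basis vector $e_{k+l-1}$. Only afterwards does it ``close the cycle'': a handleslide of the $-c_1$-framed handle over the $-b_1$-framed handle along a half-twisted band, followed by a blowdown of the $-1$-framed unknot that kills $e_{k+l-1}$, produces $X^1_{\mathbf{d}}$ embedded in $(k+l-2)\overline{\mathbb{CP}^2}$, and precisely because $e_{k+l-1}$ carried the unique even coefficient, the resulting total class has all coefficients odd. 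That two-step structure --- first the linear plumbing with a controlled even coordinate, then a specific handleslide-and-blowdown --- is the content missing from your argument; without it your bookkeeping of $e_m$-parities does not track the correct object.
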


\begin{proof}

\noindent
It is well-known that if $P$ is a linear plumbing whose associated string lies in $\mathcal{L}$ and has length $n$, then $P$ embeds in $n\overline{\mathbb{CP}^2}$ with a $\QQ B^4$ complement. Indeed, one can show that these plumbings are precisely those that can be ``rationally blown down" (see, e.g., the proof of Lemma~2.2 in \cite{parkrationalblowdown}).
We will show this fact explicitly while keeping track of the homology classes of the base spheres of the linear plumbing.

Consider the class $-2e_1\in H_2(\overline{\mathbb{CP}^2})$, where $e_1$ is the standard generator with $e_1^2=-1$. This class can be represented by a $-4$-sphere in $\overline{\mathbb{CP}^2}$ that intersects the $-1$-sphere representing $e_1$ transversely in two positive points, as shown schematically in Figure~\ref{fig:embeddings}. Note that the string $(4)$ is in $\mathcal{L}$ with $k=l=1$. 
Also note that by Lemma \ref{lem:expansions}, any string $\mathcal{L}$ can be obtained from the string $(4)$ by inductively performing the following operations:
\[ (d_1,\ldots,d_k)\to (2,d_1,\ldots,d_{k-1}, d_k+1),\]
\[ (d_1,\ldots,d_k)\to (d_1+1,d_2\ldots,d_k,2).\]

\noindent By blowing up the right point of intersection between the spheres shown in the left of Figure~\ref{fig:embeddings}, we obtain the configuration of spheres in the middle diagram. If we let $\{e_1,e_2\}$ denote the standard basis of $H_2(2\overline{\mathbb{CP}^2})$, then the $-1$-, $-2$- and $-5$-spheres represent the homology classes $e_2$, $e_1 - e_2$ and $-2e_1 - e_2$, respectively. Hence we have the linear plumbing with weights $(-5,-2)$ embedded in $2\overline{\mathbb{CP}^2}$. Note that $(5,2)\in\mathcal{L}$ and the sum of the homology classes of the 2-handles of the plumbing is $-e_1+0\cdot e_2$, which has a single even coefficient. Next, starting with the middle diagram of Figure~\ref{fig:embeddings}, we can either blow up the bottom intersection point or the top right intersection point. These blowups yield linear plumbings embedded in $3\overline{\mathbb{CP}^2}$ with associated strings $(6,2,2)$ and $(2,5,3)$, respectively, both of which are contained in $\mathcal{L}$; moreover, the sum of the homology classes of the 2-handles can be seen to have precisely one even coefficient.
Continuing inductively in this way via blowups, we always obtain a linear plumbing with associated string $(a_1,\ldots,a_n)\in\mathcal{L}$ embedded in $n\overline{\mathbb{CP}^2}$ whose total homology class has precisely one even coefficient. Moreover, any string in $\mathcal{L}$ can be obtained in this way.

\begin{figure}
    \centering
    \begin{overpic}
    [scale=1]{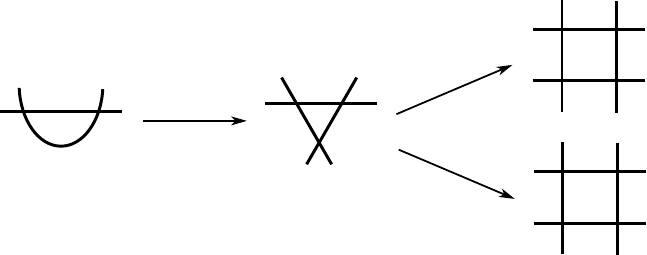}
    \put(7,14){$-4$}
    \put(7,23){$-1$}
    \put(47,25){$-2$}
    \put(42,19){$-5$}
    \put(52,19){$-1$}
    \put(89,36){$-3$}
    \put(82,30){$-5$}
    \put(89,23){$-2$}
    \put(97,30){$-1$}
    \put(89,14){$-2$}
    \put(82,7.5){$-6$}
    \put(89,1){$-1$}
    \put(97,7.5){$-2$}

    \put(18,13.5){\parbox{1in}{\centering\small blow up the\\ right point\\ of intersection}}

    \put(54,11){
    \rotatebox{-22}{\parbox{1in}{\centering\small blow up the\\ bottom point\\ of intersection}}}

    \put(53,26){
    \rotatebox{22}{\parbox{1.1in}{\centering\small blow up the \\ top right point\\ of intersection}}}
    
    \end{overpic}
    \caption{Finding linear plumbings with associated strings in $\mathcal{L}$ embedded in $m\overline{\mathbb{CP}^2}$.}
    \label{fig:embeddings}
\end{figure}

Let
\begin{gather*}
    \textbf{d}=(c_1 + b_1, b_2, \dots, b_{k-1}, b_{k}+ c_{l}, c_{l-1}, \dots, c_2)\in \mathcal{S}_{1a}^*\ \textrm{and} \\
    \textbf{c}=(b_1, b_2, \dots, b_{k-1},\allowbreak b_{k}+ c_{l}, c_{l-1}, \dots, c_1)\in \mathcal{L}.
\end{gather*}
Let $P$ be the linear plumbing embedded in $(k+l-1)\overline{\mathbb{CP}^2}$ with associated string $\textbf{c}$ obtained through the blowup process described above. Let $v_1,\ldots,v_{k+l-1}$ denote the base spheres of $P$ such that $v_1\cdot v_1=-b_1$ and $v_{k+l-1}\cdot v_{k+l-1}=-c_1$. Then either $b_1=2$ or $c_1=2$, but not both. Without loss of generality, assume that $c_1=2$. Then, $v_{k+l-1}=e_{k+l-2}-e_{k+l-1}$ and $v_1=-e_{k+l-2}-e_{k+l-1}+f$ for some vector $f\in H_2((k+l-1)\overline{\mathbb{CP}^2})$, and $v_k\cdot e_{k+l-1}\neq 0$ if and only if $k\in\{1,n\}$. It is easy to see that the unique basis element with even coefficient in the total homology class of $P$ is $e_{k+l-1}$.
A handlebody diagram of a neighborhood of $P$ along with the $-1$-sphere representing $e_{k+l-1}$ is shown in the left side of Figure~\ref{fig:handleslide}.
Orient each unknot counterclockwise and perform a handleslide of the $-c_1$-framed unknot over the $-b_1$-framed unknot using a positively half-twisted band indicated by the green arrow. The attaching circle of the resulting 2-handle will not link the $-1$-framed unknot; moreover, it has framing $-(b_1+c_1)$, and the homology class represented by the sphere given by this 2-handle is $-2e_i+f$. Finally, blow down the $-1$-framed unknot (which removes the homology basis element $e_{k+l-1}$) and ignore the $-b_1$ framed unknot to see the handlebody diagram of $X_{\textbf{d}}^1$ on the right side of Figure~\ref{fig:handleslide} embedded in $(k+l-2)\overline{\mathbb{CP}^2}$. Moreover, since the total homology class of $P$ had exactly one even coefficient, which was the coefficient of $e_{k+l-1}$, it is easy to see that the total homology class of $X_{\textbf{d}}^1$ has all odd coefficients.

Finally, by considering the Mayer--Vietoris sequence applied to $(k+l-2)\overline{\mathbb{CP}^2}=X_{\textbf{d}}^1\cup((k+l-2)\overline{\mathbb{CP}^2}\setminus X_{\textbf{d}}^1)$, it is routine to check that $(k+l-2)\overline{\mathbb{CP}^2}\setminus X_{\textbf{d}}^1$ is a $\QQ B^4$.
\end{proof}

\begin{figure}
    \centering
    \begin{overpic}
    [scale=.7]{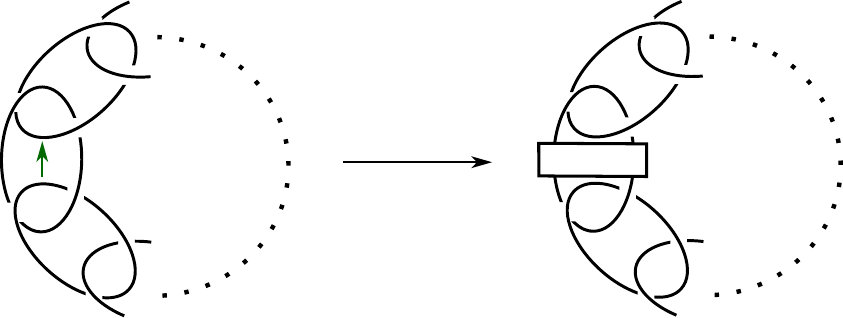}    
    \put(1,33){$-b_1$}
    \put(1,3){$-c_1$}
    \put(12,38.5){$-b_2$}
    \put(12,-3){$-c_2$}
    \put(-5,18){$-1$}

    \put(30,7){\parbox{1.5in}{\centering\small perform a\\ handleslide\\ and blow down;\\ignore the $-b_1$\\framed unknot}}

    \put(50,22){$-(b_1+c_1)$}
    \put(65,32){$-b_2$}
    \put(66,3){$-c_2$}
    \put(69,17.5){$1$}
    \end{overpic}
    \caption{Finding $X_{\textbf{d}}^1$ embedded in $(k+l-2)\overline{\mathbb{CP}^2}$, where $\textbf{d}=(c_1 + b_1, b_2, \dots, b_{k-1}, b_{k}+ c_{l}, c_{l-1}, \dots, c_2)\in \mathcal{S}_{1a}^*$.}
    \label{fig:handleslide}
\end{figure}

\begin{lem} Let $\textbf{a}\in\mathcal{S}_{1a}$ and let $\textbf{d}\in \mathcal{S}_{1a}^*$ be the cyclic dual of $\textbf{a}.$ For any $t$, $X_{\textbf{a}}^t$ can be turned into $X_{\textbf{d}}^{-t}$ via blowups, blowdowns, and orientation reversal; moreover, this process does not depend on $t$.   
\label{lem:transform}
\end{lem}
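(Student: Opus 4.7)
My plan is a direct Kirby calculus argument on the handlebody diagram of $X_{\textbf{a}}^t$ shown in Figure~\ref{fig:P}. All blowups and blowdowns that I perform will be supported in the cyclic plumbing part of the diagram, at positions strictly disjoint from the twist box labelled $t$. Because of this locality the twist parameter is never touched during the moves, so the procedure is genuinely independent of $t$; only the final orientation reversal acts on the twist box, converting its label from $t$ to $-t$.

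The essential tool is the standard Kirby manipulation realising linear duality between the strings $(b_1, \dots, b_k)$ and $(c_1, \dots, c_l)$. Writing $\textbf{a} = (b_1, \dots, b_k, 2, c_l, \dots, c_1, 2)$, the cyclic diagram of $X_{\textbf{a}}^t$ contains two linear sub-chains of framings $(-b_1, \dots, -b_k, -2)$ and $(-2, -c_l, \dots, -c_1)$ meeting on either side of the twist. For each sub-chain I apply the continued fraction identity characterising linear duality --- implemented concretely by blowing up at the crossings inside the chain and then blowing down the resulting sequence of $-1$-framed unknots, in the manner used in the proof of Lemma~\ref{lem:embed} --- so as to collapse the $-2$-framed 2-handle into the adjacent entry of the opposite block. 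Performed on both sub-chains, this absorbs the two $-2$'s into the neighbouring framings to produce the entries $-(c_1 + b_1)$ and $-(b_k + c_l)$, while leaving the remaining interior entries $-b_2, \dots, -b_{k-1}$ and $-c_{l-1}, \dots, -c_2$ unchanged; the resulting cyclic string of framings is precisely that of $\textbf{d}$.

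After this Kirby move sequence the resulting 4-manifold is a cyclic plumbing whose framings are those of $\textbf{d}$ and which still carries a twist box labelled $t$. Reversing orientation then sends the box label $t$ to $-t$ (since reversing orientation flips every crossing in the diagram and hence the sign of the twist factor $(\sigma_1\sigma_2)^{3t}$), producing the handlebody diagram of $X_{\textbf{d}}^{-t}$. The main obstacle is the bookkeeping required to verify that the local blowup/blowdown sequence produces the correct junction framings $c_1 + b_1$ and $b_k + c_l$ while preserving the interior framings; this reduces to a direct but tedious inductive check on the length of the strings using the continued fraction formulation of linear duality, together with a careful analysis of the endpoint cases where $k=1$ or $l=1$. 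Since none of these local moves ever approaches the twist box, the same sequence of Kirby moves works verbatim for every $t$, confirming the independence clause.
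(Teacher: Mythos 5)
Your proposed sequence of moves fails an immediate boundary-invariant check. You claim that blowups and blowdowns applied to $X_{\textbf{a}}^t$, leaving the twist box untouched, produce a cyclic plumbing whose framings realise $\textbf{d}$ and whose twist box is still labelled $t$ --- that is, the manifold $X_{\textbf{d}}^t$. But blowups and blowdowns preserve the boundary $3$-manifold, whereas $\partial X_{\textbf{a}}^t = Y_{\textbf{a}}^t$ and $\partial X_{\textbf{d}}^t = Y_{\textbf{d}}^t$ are in general distinct: it is $Y_{\textbf{d}}^{-t}$, not $Y_{\textbf{d}}^t$, that equals $-Y_{\textbf{a}}^t$. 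So no sequence of blowups and blowdowns alone can convert $X_{\textbf{a}}^t$ into $X_{\textbf{d}}^t$, and your intermediate stage cannot exist.

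The final step compounds the error. Reversing the orientation of a $4$-manifold mirrors its Kirby diagram, flipping every framing as well as every crossing; the orientation reversal of $X_{\textbf{d}}^t$ therefore has framings $+d_i$ and twist box $-t$, which is not the handlebody diagram of $X_{\textbf{d}}^{-t}$ (whose framings are $-d_i$). Your claim that orientation reversal affects only the twist box is incorrect. A correct argument must interleave the orientation reversal with the blowups and blowdowns so that the resulting global sign change of the framings is absorbed by subsequent moves; this is what Lemma~2.3 of~\cite{simone2020classification} carries out, and the paper's proof of the present lemma consists precisely of the citation to that result together with the observation $\partial X_{\textbf{d}}^{-t} = Y_{\textbf{d}}^{-t} = -Y_{\textbf{a}}^t = -\partial X_{\textbf{a}}^t$. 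Your attempt to reconstruct the Kirby calculus directly goes astray both in the placement of the orientation reversal and in its effect on the framings.
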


\begin{proof} This follows from the proof of Lemma~2.3 in~\cite{simone2020classification}.
\end{proof}

\begin{prop} 
$Y^{t}_{\textbf{a}}$ does not bound a $\QQ B^4$ for all odd $t>0$ and $\textbf{a}\in\mathcal{S}_{1a}$.
\label{prop:noball}
\end{prop}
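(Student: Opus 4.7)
The plan is to argue by contradiction. Suppose $Y^{t}_{\textbf{a}}$ bounds a $\QQ B^4$ for some $\textbf{a}\in\mathcal{S}_{1a}$ and some odd $t>0$. The minimal length case $\textbf{a}=(3,2,2,2,2)$ is covered by Lemma~4.2 of~\cite{simone2020classification}, so I may assume that $\textbf{a}$ has length at least six and consequently that the length of its cyclic dual $\textbf{d}\in\mathcal{S}_{1a}^*$ is $n\geq 2$. Since $Y^{-t}_{\textbf{d}}=-Y^{t}_{\textbf{a}}$ by Lemma~\ref{lem:transform}, the manifold $Y^{-t}_{\textbf{d}}$ also bounds a $\QQ B^4$. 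As $-t<0$, Theorem~\ref{thm:sharp} shows that $X^{-t}_{\textbf{d}}$ is sharp, and Theorem~\ref{thm:cbobstruction} then requires that $\Lambda_{X^{-t}_{\textbf{d}}}$ admit a cubiquitous embedding into $\ZZ^n$.

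Next I would identify this embedding concretely. Since the intersection form of $X^{s}_{\textbf{d}}$ depends only on the parity of $s$, we have $\Lambda_{X^{-t}_{\textbf{d}}}\cong\Lambda_{X^{1}_{\textbf{d}}}$; by the uniqueness of lattice embeddings established in Section~6 of~\cite{simone2020classification}, any embedding $\Lambda_{X^{-t}_{\textbf{d}}}\hookrightarrow\ZZ^n$ agrees, up to an automorphism of $\ZZ^n$, with the embedding $\Lambda_{X^{1}_{\textbf{d}}}\hookrightarrow H_2(n\overline{\mathbb{CP}^2})$ supplied by Lemma~\ref{lem:embed}. Automorphisms of $(\ZZ^n,-I)$ are signed permutations of the standard basis and therefore preserve the parity of each coefficient of any vector. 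Thus we obtain a generating subset $S=\{v_1,\dots,v_n\}\subset\ZZ^n$ whose intersection graph is the circular plumbing graph of $X^{1}_{\textbf{d}}$ — so $S$ is cyclic in the sense of Section~\ref{sec:latticebasic} — and whose Wu element $W=\sum v_i$ has all coefficients odd, by Lemma~\ref{lem:embed}.

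It then remains to verify $I(S)>0$ and invoke Theorem~\ref{thm:Wodd}. A direct unpacking of the definition of linear duality gives $\sum_i b_i+\sum_j c_j=3(k+l)-2$ for any linear-dual pair $(b_1,\dots,b_k),(c_1,\dots,c_l)$, so for $\textbf{d}=(c_1+b_1,b_2,\dots,b_{k-1},b_k+c_l,c_{l-1},\dots,c_2)$ of length $n=k+l-1$,
\[
I(S)=\sum_{i=1}^n d_i - 3n = 3(k+l)-2 - 3(k+l-1) = 1 > 0.
\]
Theorem~\ref{thm:Wodd} then implies that $S$ is not cubiquitous, contradicting the conclusion of the first paragraph. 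The main obstacle will be the middle step — transferring the ``odd Wu coefficients'' property from the concrete embedding constructed in Lemma~\ref{lem:embed} for $X^{1}_{\textbf{d}}$ to an arbitrary embedding of the isomorphic lattice $\Lambda_{X^{-t}_{\textbf{d}}}$ — which relies on the uniqueness of lattice embeddings from~\cite{simone2020classification} together with the fact that signed permutations preserve coefficient parities.
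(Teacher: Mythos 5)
Your overall structure---pass to the cyclic dual $\textbf{d}\in\mathcal{S}_{1a}^*$, use sharpness of $X^{-t}_{\textbf{d}}$ (Theorem~\ref{thm:sharp}) and Theorem~\ref{thm:cbobstruction} to force a cubiquitous embedding, identify the Wu element of the embedding via Lemma~\ref{lem:embed}, verify $I(S)>0$, and contradict via Theorem~\ref{thm:Wodd}---is the right skeleton, and your computation $I(S)=1$ is correct. However, the step you yourself flag as ``the main obstacle'' is a genuine gap. You assert that, by the uniqueness of lattice embeddings established in Section~6 of~\cite{simone2020classification}, any embedding of $\Lambda_{X^{-t}_{\textbf{d}}}$ into $\ZZ^n$ agrees up to $\Aut\ZZ^n$ with the embedding supplied by Lemma~\ref{lem:embed}. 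That uniqueness statement is established only for strings in $\mathcal{S}_{1a}$, not for their cyclic duals in $\mathcal{S}_{1a}^*$, and it is in fact \emph{false} on the dual side: the remark following Proposition~\ref{prop:noball} exhibits $\textbf{d}=(2,3,4,5,2,3,4,5)\in\mathcal{S}_{1a}^*$ whose intersection lattice admits two inequivalent embeddings with Wu coordinates $(3,1^{[7]})$ and $(2^{[3]},1^{[4]},0)$; only the first has all-odd entries. Since signed permutations preserve parity, these cannot be related by an element of $\Aut\ZZ^n$, so the embedding $\iota_{\textbf{d}}$ handed to you by Donaldson's theorem need not be the one from Lemma~\ref{lem:embed}, and the all-odd Wu condition does not transfer.

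The paper's proof closes exactly this gap by working on the $\textbf{a}$-side, where uniqueness genuinely holds, and then tracking geometrically. It records the unique subset $C_{\textbf{a}}$ coming from $X^{-1}_{\textbf{a}}$; observes via Lemma~\ref{lem:transform} that a fixed blowup/blowdown/orientation-reversal procedure carries the closed pair $X^{1}_{\textbf{d}}\cup B'$ of Lemma~\ref{lem:embed} to $X^{-1}_{\textbf{a}}\cup(-B')$, and that reversing this procedure recovers the specific subset $C^{\phi}_{\textbf{d}}$ with odd Wu coordinates; and finally applies the identical procedure to the hypothetical $X^{t}_{\textbf{a}}\cup B$ to get $X^{-t}_{\textbf{d}}\cup(-B)$, deducing $C^{\iota}_{\textbf{d}}=C^{\phi}_{\textbf{d}}$ precisely because the same sequence of moves is applied starting from the unique $C_{\textbf{a}}$. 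To repair your argument you need this additional bookkeeping through Lemma~\ref{lem:transform}; appealing to uniqueness of embeddings for $\mathcal{S}_{1a}^*$ strings, or to $\Aut\ZZ^n$ alone, does not suffice.
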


\begin{proof} 
    Throughout, for any string $\textbf{c}=(c_1,\ldots,c_k)$ and odd integer $t$, we endow $H_2(X^t_{\textbf{c}})$ with the basis given by the 2-handles in the handlebody diagram for $X^t_{\textbf{c}}$ shown in Figure~\ref{fig:P}, ordered according to the order of $\textbf{c}$. Note that the matrices of the intersection forms of $X_{\textbf{c}}^{t_1}$ and $X_{\textbf{c}}^{t_2}$ are identical for all odd integers $t_1$ and $t_2$; this is evident from the handlebody diagrams of $X_{\textbf{c}}^{t_1}$ and $X_{\textbf{c}}^{t_2}$. Hence we denote the intersection form of $X_{\textbf{c}}^t$ by $Q_{\textbf{c}}$ for all odd $t$.
    It follows that there exists a lattice embedding $\psi_{\textbf{c}}^{t_1}:(H_2(X_{\textbf{c}}^{t_1}),Q_{\textbf{c}})\to (\ZZ^k,-I)$ if and only if there exists a lattice embedding $\psi_{\textbf{c}}^{t_2}:(H_2(X_{\textbf{c}}^{t_2}),Q_{\textbf{c}})\to (\ZZ^k,-I)$ and, moreover, these lattice embeddings are identical (up to composing with an element of $\Aut\ZZ^k$). Hence for such lattice embeddings, we will drop the superscript and simply write $\psi_{\textbf{c}}:(H_2(X_{\textbf{c}}),Q_{\textbf{c}})\to (\ZZ^k,-I)$. Finally, we define the subset $C^{\psi}_{\textbf{c}}\subset \ZZ^k$ be the negative cyclic subset whose vectors are the images under $\psi_{\textbf{c}}$ of the basis vectors of $H_2(X_{\textbf{c}})$. Note that $C^{\psi}_{\textbf{c}}$ completely determines  the lattice embedding $\psi_{\textbf{c}}$.
   
    Let $\textbf{a}\in\mathcal{S}_{1a}$ and let $\textbf{d}\in \mathcal{S}_{1a}^*$ be the cyclic dual of $\textbf{a}.$ 
    Let $n$ denote the length of $\textbf{a}$ and let $m$ denote the length of $\textbf{d}$.  
    By Theorem~\ref{thm:s}, $Y^{-1}_{\textbf{a}}$ bounds a $\QQ B^4$, and, moreover, by the lattice analysis undertaken in Section 6 of~\cite{simone2020classification}, there is a unique lattice embedding $\phi_{\textbf{a}}:(H_2(X_{\textbf{a}}),Q_{\textbf{a}})\to (\ZZ^n,-I)$ (up to composing with an element of $\Aut\ZZ^n$).  
    By Lemmas \ref{lem:revorient} and \ref{lem:embed}, $Y^1_{\textbf{d}}=\overline{Y^{-1}_{\textbf{a}}}$ bounds a $\QQ B^4$ and there exists a lattice embedding $\phi_{\textbf{d}}:(H_2(X_{\textbf{d}}),Q_{\textbf{d}})\to (\ZZ^m,-I)$ given by Donaldson's Theorem such that the Wu element of the negative cyclic subset $C^{\phi}_{\textbf{d}}$ has no even coefficients. 
    
    Fix odd $t>0$ and assume that $Y^{t}_{\textbf{a}}$ bounds a $\QQ B^4$, denoted $B$. Then $Y^{-t}_{\textbf{d}}=-Y^t_{\textbf{a}}$ bounds the rational homology ball $-B$. 
    By Donaldson's Theorem, there exist lattice embeddings $\iota_{\textbf{d}}:(H_2(X_{\textbf{d}}),Q_{\textbf{d}})\to (\ZZ^m,-I)$ and $\iota_{\textbf{a}}:(H_2(X_{\textbf{a}}),Q_{\textbf{a}})\to (\ZZ^n,-I)$. 
    By the uniqueness discussed in the previous paragraph, we necessarily have that $C^{\phi}_{\textbf{a}}=C^{\iota}_{\textbf{a}}$. Set $C_{\textbf{a}}:=C^{\phi}_{\textbf{a}}=C^{\iota}_{\textbf{a}}$.
    
    We now show that $C^{\iota}_{\textbf{d}}=C^{\phi}_{\textbf{d}}$.
    By Lemma~\ref{lem:embed}, $X_{\textbf{d}}^1$ embeds in $m\overline{\mathbb{CP}^2}$ with $\QQ B^4$ complement, which we call $B'$. By Lemma~\ref{lem:transform} we can perform blowups and blowdowns in the interior of $X_{\textbf{d}}^1$ embedded in $m\overline{\mathbb{CP}^2}$ along with an ambient orientation reversal to obtain $X_{\textbf{a}}^{-1}$ embedded in $n\overline{\mathbb{CP}^2}$ with complement $-B'$; hence we obtain the unique lattice embedding given by the subset $C_{\textbf{a}}$. We can reverse this process starting with $C_{\textbf{a}}$ and the embedding of $X_{\textbf{a}}^{-1}$ in $n\overline{\mathbb{CP}^2}$ to recover $C^{\phi}_{\textbf{d}}$. Performing the identical procedure to $X_{\textbf{a}}^{t}\cup B$ (cf.~Lemma~\ref{lem:transform}) yields the closed negative-definite 4-manifold $X_{\textbf{d}}^{-t}\cup (-B)$ and changes the negative cyclic subset $C_{\textbf{a}}$ to $C^{\iota}_{\textbf{d}}$. Since we performed the same blowup/blowdown/orientation reversal procedure as above, we necessarily have that $C^{\iota}_{\textbf{d}}=C^{\phi}_{\textbf{d}}$.
    
    It follows that the Wu element of $C^{\iota}_{\textbf{d}}$ has no even coefficients. Moreover, it is easy to see that $I(C_\textbf{a})=-4$;  by Remark \ref{rem:Isum}, $I(C^{\iota}_{\textbf{d}})=4$. Thus by Theorem \ref{thm:Wodd}, $C^{\iota}_{\textbf{d}}$ is not cubiquitous. But by Theorem \ref{thm:sharp}, $X^{-t}_{\textbf{d}}$ is sharp and so by Theorem \ref{thm:cbobstruction}, $C^{\iota}_{\textbf{d}}$ must be cubiquitous, which is a contradiction.
\end{proof}

\begin{remark}
    In the proof of Proposition \ref{prop:noball}, a key point was that there is a unique lattice embedding associated to $X^{-1}_{\textbf{a}}$. It turns out that the same is not true for $X^1_{\textbf{d}}$; there are examples of many lattice embeddings associated to a particular string in $\mathcal{S}_{1a}^*$ that do not satisfy the hypothesis of Theorem \ref{thm:Wodd}. For instance, the intersection lattice in the case $\mathbf{d} = (2,3,4,5,2,3,4,5) \in \mathcal{S}_{1a}^*$ admits distinct embeddings with the coordinates of the Wu element given by $(3,1^{[7]})$ and $(2^{[3]}, 1^{[4]}, 0)$, respectively.
\end{remark}

We are now ready to complete the proof of Theorem \ref{thm:1}.

\proof[Proof of Theorem \ref{thm:1}]
It follows from Theorem~\ref{thm:s} that in order to complete the proof of Theorem~\ref{thm:1}, we need to obstruct $Y^1_\mathbf{a}$ (resp., $Y^{-1}_\mathbf{d}$) for $\mathbf{a} \in \mathcal{S}_{1a} \cup \mathcal{O} \setminus \{ \mathbf{3}_6 \} $ (resp., $\mathbf{d} \in \mathcal{S}_{1a}^* \cup \mathcal{O} \setminus \{ \mathbf{3}_6 \}$) from bounding a $\QQ B^4$. In fact, since for $\mathbf{a} \in \mathcal{S}_{1a} \cup \mathcal{O} \setminus \{ \mathbf{3}_6 \}$ and its cyclic dual $\mathbf{d} \in \mathcal{S}_{1a}^* \cup \mathcal{O} \setminus \{ \mathbf{3}_6 \}$ the manifolds $Y^1_\mathbf{a}$ and $Y^{-1}_\mathbf{d}$ are related by reversing the orientation (Lemma \ref{lem:revorient}), it is sufficient to only prove the first part of the statement. When $\mathbf{a} \in \mathcal{S}_{1a}$, this follows directly from Proposition~\ref{prop:noball}.

Now consider $\mathbf{a} \in \mathcal{O} \setminus \{ \mathbf{3}_6 \}$. Note that $Y^{-1}_\mathbf{a}$ 
bounds a sharp manifold by Theorem~\ref{thm:sharp}. We carry out a computation in the accompanying \textsc{SageMath} notebook available at {\url{https://www.vbrej.xyz/research}} to directly verify that none of the embeddings of the associated lattices are cubiquitous. This implies that $Y^{-1}_\mathbf{a}$ does not bound a $\QQ B^4$, hence the result follows from Theorem~\ref{thm:cbobstruction}.\qedhere

\section{Proof of Theorem~\ref{thm:2}}
\label{sec:ribbons}

To simplify the number of diagrammatic arguments needed, we can use Lemma \ref{lem:mirror}, which states that if $\textbf{a}$ and $\textbf{d}$ are cyclic duals, then the mirror of $B_\textbf{a}^t$ is $B_\textbf{d}^{-t}$. Hence $B_\textbf{a}^t$ is $\chi-$ribbon if and only if $B_\textbf{d}^{-t}$ is $\chi-$ribbon. 

In~\cite{brejevs2020ribbon} it was shown that if $\textbf{a}\in \mathcal{S}_{2}\setminus\mathcal{S}_{2c}$, then $B^0_\textbf{a}$ is $\chi$-ribbon. It follows that $B^0_\textbf{d}$ is also $\chi-$ribbon, where $\textbf{d}$ is the cyclic dual of $\textbf{a}$. Hence if
$\textbf{a}\in(\mathcal{S}_{2}\cup\mathcal{S}_{2}^*)\setminus\mathcal{S}_{2c}$, then $B^0_\textbf{a}$ is $\chi$-ribbon.
The proof in \cite{brejevs2020ribbon} that $B^0_\textbf{a}$ is $\chi$-ribbon for all $\textbf{a}\in \mathcal{S}_{2}\setminus\mathcal{S}_{2c}$ hinges on the observation that if $\mathbf{a}$ contains two substrings that are linear duals of each other, then the corresponding 3-braid closure contains sub-braids $B$ and $C$ that can be cancelled by an isotopy whenever they are connected by a half-twist $\sigma_2 \sigma_1 \sigma_2$; for a careful discussion of this fact, we refer the reader to Section~2 of~\cite{brejevs2020ribbon}. In the following, we enclose $B$ and $C$ in blue and chartreuse rectangles. We exhibit $\chi$-ribbon surfaces for links $B^{-1}_{\textbf{a}}$, where $\textbf{a}\in\mathcal{S}_1$, and for links $B^1_{\textbf{a}}$, where $\textbf{a}\in\mathcal{S}_1\setminus\mathcal{S}_{1a}$, in Figures~\ref{fig:S1ad-1} to~\ref{fig:S1ed+1}. It follows by the remarks above that if $\textbf{a}\in\mathcal{S}_1\cup( \mathcal{S}_1^*\setminus\mathcal{S}_{1a}^*)$, then $B^{-1}_{\textbf{a}}$ is $\chi-$ribbon, and if $\textbf{a}\in(\mathcal{S}_1\setminus\mathcal{S}_{1a})\cup \mathcal{S}_1^*$, then $B^1_{\textbf{a}}$ is $\chi-$ribbon.

It remains to show that if $L=B_{\mathbf{3}_6}^{\pm1}$, then $L$ is not $\chi$-slice. 
Up to isotopy, there are two distinct orientations on $L$: in one case, all strands in the braid whose closure yields $L$ are oriented in the same direction, and in the other, one of the strands is reversed. Routine computations show that in the first case, the signature of $L$ is non-zero, so by Lemma~3.2 in~\cite{DonaldOwens2012}, $L$ does not bound an oriented Euler characteristic one slice surface. In the second case, the Alexander polynomial of $L$ is given by
\[
\Delta_L(t) = (t - 1)^2 \cdot (1 - 6t + 19t^2 - 29t^3 + 19t^4 - 6t^5 + t^6);
\]
the degree six factor is irreducible in $\ZZ[t^{\pm 1}]$, hence $\Delta_L(t)$ does not factor (up to multiplication by units) as $f(t) \cdot f(t^{-1})$ for some $f \in \ZZ[t^{\pm 1}]$. Thus, by Remark~5.4 in~\cite{florens}, $L$ also does not bound an oriented Euler characteristic one slice surface in this case.

Next notice that $L$ is a three-component link and each pair of components forms a Hopf link; hence any two components of $L$ cannot bound a disjoint union of two disks.
If $L=L_1\cup L_2\cup L_3$ is $\chi$-slice, then any Euler characteristic one surface $F$ bounded by $L$ must be non-orientable; hence $F$ must be the disjoint union of a disk and two M\"obius bands. Without loss of generality, assume $L_1$ bounds the disk. But then $L_1\cup L_2$ bounds the disjoint union of a disk and M\"obius band, implying that the Hopf link is $\chi$-slice. This is clearly not possible, however, since the determinant of the Hopf link is not a square.\qedhere

		\begin{figure}[h]
                \vspace{2cm}
			\centering
			\def\svgwidth{0.8\textwidth}
			\makebox[\textwidth][c]{
\begingroup%
  \makeatletter%
  \providecommand\color[2][]{%
    \errmessage{(Inkscape) Color is used for the text in Inkscape, but the package 'color.sty' is not loaded}%
    \renewcommand\color[2][]{}%
  }%
  \providecommand\transparent[1]{%
    \errmessage{(Inkscape) Transparency is used (non-zero) for the text in Inkscape, but the package 'transparent.sty' is not loaded}%
    \renewcommand\transparent[1]{}%
  }%
  \providecommand\rotatebox[2]{#2}%
  \newcommand*\fsize{\dimexpr\f@size pt\relax}%
  \newcommand*\lineheight[1]{\fontsize{\fsize}{#1\fsize}\selectfont}%
  \ifx\svgwidth\undefined%
    \setlength{\unitlength}{203.25001153bp}%
    \ifx\svgscale\undefined%
      \relax%
    \else%
      \setlength{\unitlength}{\unitlength * \real{\svgscale}}%
    \fi%
  \else%
    \setlength{\unitlength}{\svgwidth}%
  \fi%
  \global\let\svgwidth\undefined%
  \global\let\svgscale\undefined%
  \makeatother%
  \begin{picture}(1,0.54053942)%
    \lineheight{1}%
    \setlength\tabcolsep{0pt}%
    \put(0,0){\includegraphics[width=\unitlength,page=1]{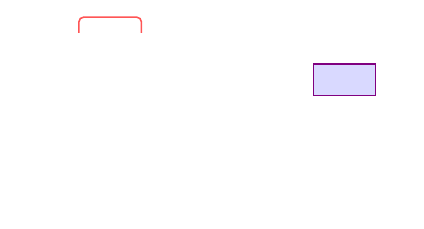}}%
    \put(0.80133909,0.34313908){\color[rgb]{0,0,0}\makebox(0,0)[lt]{\lineheight{1.25}\smash{\begin{tabular}[t]{l}$m$\end{tabular}}}}%
    \put(0,0){\includegraphics[width=\unitlength,page=2]{qa2_all_2.pdf}}%
    \put(0.2303116,0.52184899){\color[rgb]{0,0,0}\makebox(0,0)[lt]{\lineheight{1.25}\smash{\begin{tabular}[t]{l}$\textcolor{red}{-1}$\end{tabular}}}}%
    \put(0.05687029,0.06014595){\color[rgb]{0,0,0}\makebox(0,0)[lt]{\lineheight{1.25}\smash{\begin{tabular}[t]{l}$\xxra{0}{\textrm{expand band + isotopy}}$\end{tabular}}}}%
    \put(0,0){\includegraphics[width=\unitlength,page=3]{qa2_all_2.pdf}}%
    \put(0.56517673,0.38004002){\color[rgb]{0,0,0}\makebox(0,0)[lt]{\lineheight{1.25}\smash{\begin{tabular}[t]{l}$t-1$\end{tabular}}}}%
  \end{picture}%
\endgroup%
}
			\caption{
				Band moves exhibiting $\chi$-ribbon surfaces for closures of braids in family (ii) from Theorem~\ref{thm:murasugi} for $t \geq 1$ and $m \in \ZZ$. In the figure, the cyan rectangle contains $t - 1$ positive full twists $(\sigma_1 \sigma_2)^3$. An analogous positively half-twisted band yields the 2-unlink in the case $t \leq -1$, whilst if $t = 0$ so does an untwisted band between the two twisted strands. Here and further, bands are shown in red and annotated with the number of half-twists in the band with respect to the blackboard framing.
			} \label{fig:fam2}
		\end{figure}
  
	\afterpage{\clearpage}
		\begin{figure}[p]
			\centering
			\def\svgwidth{1.1\textwidth}
			\makebox[\textwidth][c]{
\begingroup%
  \makeatletter%
  \providecommand\color[2][]{%
    \errmessage{(Inkscape) Color is used for the text in Inkscape, but the package 'color.sty' is not loaded}%
    \renewcommand\color[2][]{}%
  }%
  \providecommand\transparent[1]{%
    \errmessage{(Inkscape) Transparency is used (non-zero) for the text in Inkscape, but the package 'transparent.sty' is not loaded}%
    \renewcommand\transparent[1]{}%
  }%
  \providecommand\rotatebox[2]{#2}%
  \newcommand*\fsize{\dimexpr\f@size pt\relax}%
  \newcommand*\lineheight[1]{\fontsize{\fsize}{#1\fsize}\selectfont}%
  \ifx\svgwidth\undefined%
    \setlength{\unitlength}{280.09221835bp}%
    \ifx\svgscale\undefined%
      \relax%
    \else%
      \setlength{\unitlength}{\unitlength * \real{\svgscale}}%
    \fi%
  \else%
    \setlength{\unitlength}{\svgwidth}%
  \fi%
  \global\let\svgwidth\undefined%
  \global\let\svgscale\undefined%
  \makeatother%
  \begin{picture}(1,0.82803363)%
    \lineheight{1}%
    \setlength\tabcolsep{0pt}%
    \put(0,0){\includegraphics[width=\unitlength,page=1]{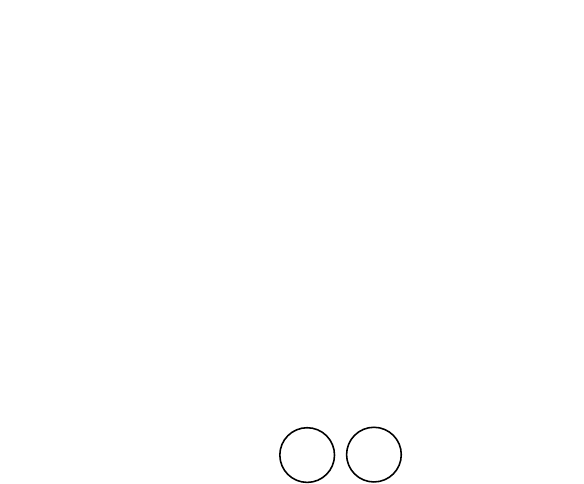}}%
    \put(0.36002004,0.04364495){\color[rgb]{0,0,0}\makebox(0,0)[lt]{\lineheight{1.25}\smash{\begin{tabular}[t]{l}$\xxra{0}[(3)]{\textrm{isotopy}}$\end{tabular}}}}%
    \put(0,0){\includegraphics[width=\unitlength,page=2]{s1ad-1.pdf}}%
    \put(-0.00148179,0.4652067){\color[rgb]{0,0,0}\makebox(0,0)[lt]{\lineheight{1.25}\smash{\begin{tabular}[t]{l}$\xxra{0}[(1)]{\textrm{expand band}}$\end{tabular}}}}%
    \put(0,0){\includegraphics[width=\unitlength,page=3]{s1ad-1.pdf}}%
    \put(0.05123609,0.22564825){\color[rgb]{0,0,0}\makebox(0,0)[lt]{\lineheight{1.25}\smash{\begin{tabular}[t]{l}$\xxra{0}[(2)]{\textrm{cancel duals}}$\end{tabular}}}}%
    \put(0,0){\includegraphics[width=\unitlength,page=4]{s1ad-1.pdf}}%
    \put(0.69102041,0.72751155){\color[rgb]{0,0,0}\makebox(0,0)[lt]{\lineheight{1.25}\smash{\begin{tabular}[t]{l}$\textcolor{red}{-1}$\end{tabular}}}}%
  \end{picture}%
\endgroup%
}
			\caption{
				Band moves for the family $ \mathcal{S}_{1a}^{-1} $. For the definition of blue and chartreuse rectangles in all following figures see Section~2 in~\cite{brejevs2020ribbon}.
			} \label{fig:S1ad-1}
		\end{figure}

	\afterpage{\clearpage}
		\begin{figure}[p]
			\centering
			\def\svgwidth{1.1\textwidth}
			\makebox[\textwidth][c]{
\begingroup%
  \makeatletter%
  \providecommand\color[2][]{%
    \errmessage{(Inkscape) Color is used for the text in Inkscape, but the package 'color.sty' is not loaded}%
    \renewcommand\color[2][]{}%
  }%
  \providecommand\transparent[1]{%
    \errmessage{(Inkscape) Transparency is used (non-zero) for the text in Inkscape, but the package 'transparent.sty' is not loaded}%
    \renewcommand\transparent[1]{}%
  }%
  \providecommand\rotatebox[2]{#2}%
  \newcommand*\fsize{\dimexpr\f@size pt\relax}%
  \newcommand*\lineheight[1]{\fontsize{\fsize}{#1\fsize}\selectfont}%
  \ifx\svgwidth\undefined%
    \setlength{\unitlength}{317.62499712bp}%
    \ifx\svgscale\undefined%
      \relax%
    \else%
      \setlength{\unitlength}{\unitlength * \real{\svgscale}}%
    \fi%
  \else%
    \setlength{\unitlength}{\svgwidth}%
  \fi%
  \global\let\svgwidth\undefined%
  \global\let\svgscale\undefined%
  \makeatother%
  \begin{picture}(1,0.92620561)%
    \lineheight{1}%
    \setlength\tabcolsep{0pt}%
    \put(0,0){\includegraphics[width=\unitlength,page=1]{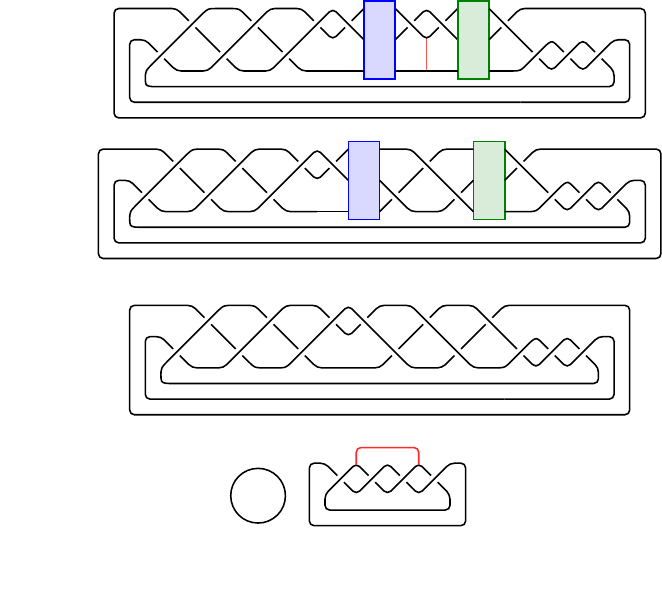}}%
    \put(0.60581081,0.8388963){\color[rgb]{0,0,0}\makebox(0,0)[lt]{\lineheight{1.25}\smash{\begin{tabular}[t]{l}$\textcolor{red}{-1}$\end{tabular}}}}%
    \put(0.56330789,0.26038389){\color[rgb]{0,0,0}\makebox(0,0)[lt]{\lineheight{1.25}\smash{\begin{tabular}[t]{l}$\textcolor{red}{+1}$\end{tabular}}}}%
    \put(-0.00130669,0.60758726){\color[rgb]{0,0,0}\makebox(0,0)[lt]{\lineheight{1.25}\smash{\begin{tabular}[t]{l}$\xxra{0}[(1)]{\textrm{expand band}}$\end{tabular}}}}%
    \put(0.04591881,0.37145982){\color[rgb]{0,0,0}\makebox(0,0)[lt]{\lineheight{1.25}\smash{\begin{tabular}[t]{l}$\xxra{0}[(2)]{\textrm{cancel duals}}$\end{tabular}}}}%
    \put(0.23482083,0.16839031){\color[rgb]{0,0,0}\makebox(0,0)[lt]{\lineheight{1.25}\smash{\begin{tabular}[t]{l}$\xxra{0}[(3)]{\textrm{isotopy}}$\end{tabular}}}}%
    \put(0,0){\includegraphics[width=\unitlength,page=2]{s1bd-1.pdf}}%
    \put(0.17936686,0.03852016){\color[rgb]{0,0,0}\makebox(0,0)[lt]{\lineheight{1.25}\smash{\begin{tabular}[t]{l}$\xxra{0}[(4)]{\textrm{expand band + isotopy}}$\end{tabular}}}}%
  \end{picture}%
\endgroup%
}
			\caption{
				Band moves for the family $ \mathcal{S}_{1b}^{-1} $.
			} \label{fig:S1bd-1}
		\end{figure}

	\afterpage{\clearpage}
		\begin{figure}[p]
			\centering
			\def\svgwidth{1.1\textwidth}
			\makebox[\textwidth][c]{
\begingroup%
  \makeatletter%
  \providecommand\color[2][]{%
    \errmessage{(Inkscape) Color is used for the text in Inkscape, but the package 'color.sty' is not loaded}%
    \renewcommand\color[2][]{}%
  }%
  \providecommand\transparent[1]{%
    \errmessage{(Inkscape) Transparency is used (non-zero) for the text in Inkscape, but the package 'transparent.sty' is not loaded}%
    \renewcommand\transparent[1]{}%
  }%
  \providecommand\rotatebox[2]{#2}%
  \newcommand*\fsize{\dimexpr\f@size pt\relax}%
  \newcommand*\lineheight[1]{\fontsize{\fsize}{#1\fsize}\selectfont}%
  \ifx\svgwidth\undefined%
    \setlength{\unitlength}{310.26335709bp}%
    \ifx\svgscale\undefined%
      \relax%
    \else%
      \setlength{\unitlength}{\unitlength * \real{\svgscale}}%
    \fi%
  \else%
    \setlength{\unitlength}{\svgwidth}%
  \fi%
  \global\let\svgwidth\undefined%
  \global\let\svgscale\undefined%
  \makeatother%
  \begin{picture}(1,0.76446683)%
    \lineheight{1}%
    \setlength\tabcolsep{0pt}%
    \put(0,0){\includegraphics[width=\unitlength,page=1]{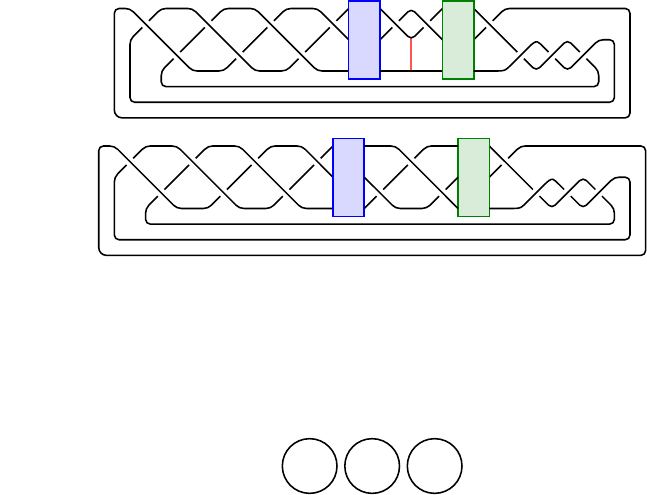}}%
    \put(0.59888736,0.67371987){\color[rgb]{0,0,0}\makebox(0,0)[lt]{\lineheight{1.25}\smash{\begin{tabular}[t]{l}$\textcolor{red}{-1}$\end{tabular}}}}%
    \put(-0.0013377,0.44175712){\color[rgb]{0,0,0}\makebox(0,0)[lt]{\lineheight{1.25}\smash{\begin{tabular}[t]{l}$\xxra{0}[(1)]{\textrm{expand band}}$\end{tabular}}}}%
    \put(0,0){\includegraphics[width=\unitlength,page=2]{s1bd+1.pdf}}%
    \put(0.60855654,0.33529776){\color[rgb]{0,0,0}\makebox(0,0)[lt]{\lineheight{1.25}\smash{\begin{tabular}[t]{l}$\textcolor{red}{-1}$\end{tabular}}}}%
    \put(0.04649911,0.20132124){\color[rgb]{0,0,0}\makebox(0,0)[lt]{\lineheight{1.25}\smash{\begin{tabular}[t]{l}$\xxra{0}[(2)]{\textrm{cancel duals}}$\end{tabular}}}}%
    \put(0.20779573,0.03943408){\color[rgb]{0,0,0}\makebox(0,0)[lt]{\lineheight{1.25}\smash{\begin{tabular}[t]{l}$\xxra{0}[(3)]{\textrm{expand band + isotopy}}$\end{tabular}}}}%
  \end{picture}%
\endgroup%
}
			\caption{
				Band moves for the family $ \mathcal{S}_{1b}^1 $.
			} \label{fig:S1bd+1}
		\end{figure}

	\afterpage{\clearpage}
		\begin{figure}[p]
			\centering
			\def\svgwidth{1.1\textwidth}
			\makebox[\textwidth][c]{
\begingroup%
  \makeatletter%
  \providecommand\color[2][]{%
    \errmessage{(Inkscape) Color is used for the text in Inkscape, but the package 'color.sty' is not loaded}%
    \renewcommand\color[2][]{}%
  }%
  \providecommand\transparent[1]{%
    \errmessage{(Inkscape) Transparency is used (non-zero) for the text in Inkscape, but the package 'transparent.sty' is not loaded}%
    \renewcommand\transparent[1]{}%
  }%
  \providecommand\rotatebox[2]{#2}%
  \newcommand*\fsize{\dimexpr\f@size pt\relax}%
  \newcommand*\lineheight[1]{\fontsize{\fsize}{#1\fsize}\selectfont}%
  \ifx\svgwidth\undefined%
    \setlength{\unitlength}{288.4285706bp}%
    \ifx\svgscale\undefined%
      \relax%
    \else%
      \setlength{\unitlength}{\unitlength * \real{\svgscale}}%
    \fi%
  \else%
    \setlength{\unitlength}{\svgwidth}%
  \fi%
  \global\let\svgwidth\undefined%
  \global\let\svgscale\undefined%
  \makeatother%
  \begin{picture}(1,0.96795557)%
    \lineheight{1}%
    \setlength\tabcolsep{0pt}%
    \put(0,0){\includegraphics[width=\unitlength,page=1]{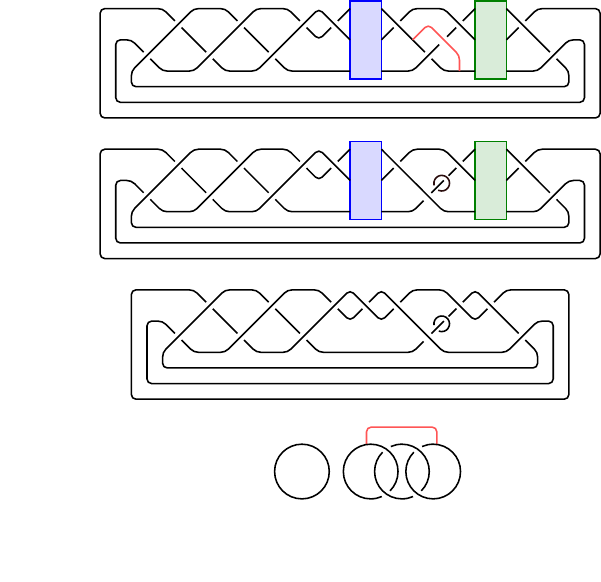}}%
    \put(0.68861555,0.93256023){\color[rgb]{0,0,0}\makebox(0,0)[lt]{\lineheight{1.25}\smash{\begin{tabular}[t]{l}$\textcolor{red}{-1}$\end{tabular}}}}%
    \put(-0.00143897,0.61617251){\color[rgb]{0,0,0}\makebox(0,0)[lt]{\lineheight{1.25}\smash{\begin{tabular}[t]{l}$\xxra{0}[(1)]{\textrm{expand band}}$\end{tabular}}}}%
    \put(0.05056696,0.38214576){\color[rgb]{0,0,0}\makebox(0,0)[lt]{\lineheight{1.25}\smash{\begin{tabular}[t]{l}$\xxra{0}[(2)]{\textrm{cancel duals}}$\end{tabular}}}}%
    \put(0.6418102,0.26837006){\color[rgb]{0,0,0}\makebox(0,0)[lt]{\lineheight{1.25}\smash{\begin{tabular}[t]{l}$\textcolor{red}{-1}$\end{tabular}}}}%
    \put(0.33418511,0.1796778){\color[rgb]{0,0,0}\makebox(0,0)[lt]{\lineheight{1.25}\smash{\begin{tabular}[t]{l}$\xxra{0}[(3)]{\textrm{isotopy}}$\end{tabular}}}}%
    \put(0,0){\includegraphics[width=\unitlength,page=2]{s1cd-1.pdf}}%
    \put(0.21312518,0.04241928){\color[rgb]{0,0,0}\makebox(0,0)[lt]{\lineheight{1.25}\smash{\begin{tabular}[t]{l}$\xxra{0}[(4)]{\textrm{expand band + isotopy}}$\end{tabular}}}}%
  \end{picture}%
\endgroup%
}
			\caption{
				Band moves for the family $ \mathcal{S}_{1c}^{-1} $.
			} \label{fig:S1cd-1}
		\end{figure}

	\afterpage{\clearpage}
		\begin{figure}[p]
			\centering
			\def\svgwidth{1.1\textwidth}
			\makebox[\textwidth][c]{
\begingroup%
  \makeatletter%
  \providecommand\color[2][]{%
    \errmessage{(Inkscape) Color is used for the text in Inkscape, but the package 'color.sty' is not loaded}%
    \renewcommand\color[2][]{}%
  }%
  \providecommand\transparent[1]{%
    \errmessage{(Inkscape) Transparency is used (non-zero) for the text in Inkscape, but the package 'transparent.sty' is not loaded}%
    \renewcommand\transparent[1]{}%
  }%
  \providecommand\rotatebox[2]{#2}%
  \newcommand*\fsize{\dimexpr\f@size pt\relax}%
  \newcommand*\lineheight[1]{\fontsize{\fsize}{#1\fsize}\selectfont}%
  \ifx\svgwidth\undefined%
    \setlength{\unitlength}{280.87499423bp}%
    \ifx\svgscale\undefined%
      \relax%
    \else%
      \setlength{\unitlength}{\unitlength * \real{\svgscale}}%
    \fi%
  \else%
    \setlength{\unitlength}{\svgwidth}%
  \fi%
  \global\let\svgwidth\undefined%
  \global\let\svgscale\undefined%
  \makeatother%
  \begin{picture}(1,0.89542782)%
    \lineheight{1}%
    \setlength\tabcolsep{0pt}%
    \put(0,0){\includegraphics[width=\unitlength,page=1]{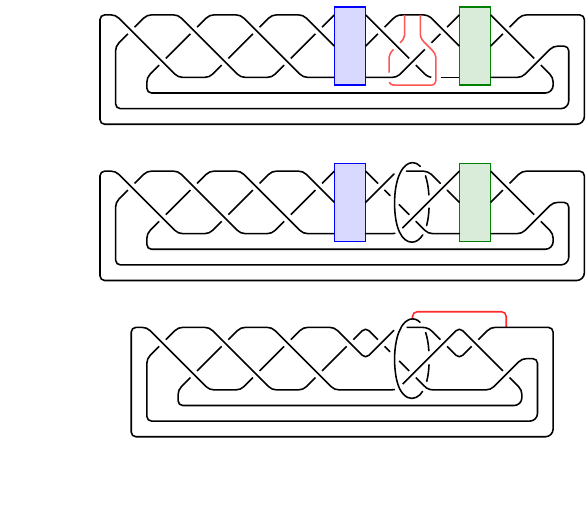}}%
    \put(0.68291173,0.88190283){\color[rgb]{0,0,0}\makebox(0,0)[lt]{\lineheight{1.25}\smash{\begin{tabular}[t]{l}$\textcolor{red}{-2}$\end{tabular}}}}%
    \put(0,0){\includegraphics[width=\unitlength,page=2]{s1cd+1.pdf}}%
    \put(0.76991088,0.37434283){\color[rgb]{0,0,0}\makebox(0,0)[lt]{\lineheight{1.25}\smash{\begin{tabular}[t]{l}$\textcolor{red}{0}$\end{tabular}}}}%
    \put(-0.00147767,0.50283901){\color[rgb]{0,0,0}\makebox(0,0)[lt]{\lineheight{1.25}\smash{\begin{tabular}[t]{l}$\xxra{0}[(1)]{\textrm{expand band}}$\end{tabular}}}}%
    \put(0.05400755,0.23724612){\color[rgb]{0,0,0}\makebox(0,0)[lt]{\lineheight{1.25}\smash{\begin{tabular}[t]{l}$\xxra{0}[(2)]{\textrm{cancel duals}}$\end{tabular}}}}%
    \put(0.17613309,0.03705863){\color[rgb]{0,0,0}\makebox(0,0)[lt]{\lineheight{1.25}\smash{\begin{tabular}[t]{l}$\xxra{0}[(3)]{\textrm{expand band + isotopy}}$\end{tabular}}}}%
  \end{picture}%
\endgroup%
}
			\caption{
				Band moves for the family $ \mathcal{S}_{1c}^1 $.
			} \label{fig:S1cd+1}
		\end{figure}

	\afterpage{\clearpage}
		\begin{figure}[p]
			\centering
			\def\svgwidth{1.1\textwidth}
			\makebox[\textwidth][c]{
\begingroup%
  \makeatletter%
  \providecommand\color[2][]{%
    \errmessage{(Inkscape) Color is used for the text in Inkscape, but the package 'color.sty' is not loaded}%
    \renewcommand\color[2][]{}%
  }%
  \providecommand\transparent[1]{%
    \errmessage{(Inkscape) Transparency is used (non-zero) for the text in Inkscape, but the package 'transparent.sty' is not loaded}%
    \renewcommand\transparent[1]{}%
  }%
  \providecommand\rotatebox[2]{#2}%
  \newcommand*\fsize{\dimexpr\f@size pt\relax}%
  \newcommand*\lineheight[1]{\fontsize{\fsize}{#1\fsize}\selectfont}%
  \ifx\svgwidth\undefined%
    \setlength{\unitlength}{333.37495819bp}%
    \ifx\svgscale\undefined%
      \relax%
    \else%
      \setlength{\unitlength}{\unitlength * \real{\svgscale}}%
    \fi%
  \else%
    \setlength{\unitlength}{\svgwidth}%
  \fi%
  \global\let\svgwidth\undefined%
  \global\let\svgscale\undefined%
  \makeatother%
  \begin{picture}(1,0.74429169)%
    \lineheight{1}%
    \setlength\tabcolsep{0pt}%
    \put(0,0){\includegraphics[width=\unitlength,page=1]{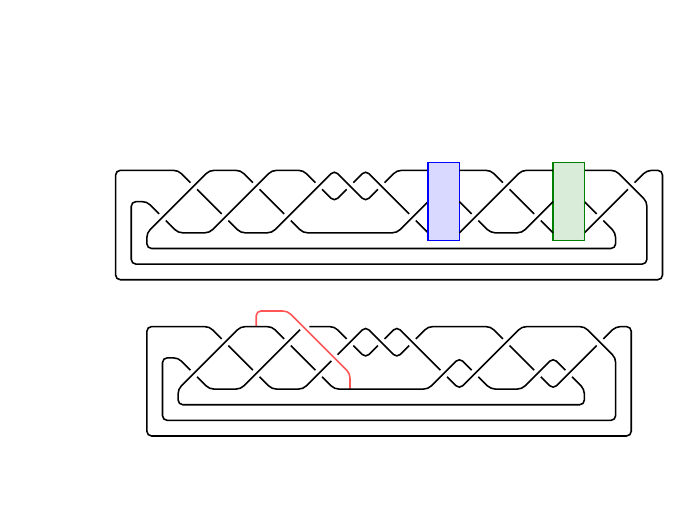}}%
    \put(0.37739136,0.30544994){\color[rgb]{0,0,0}\makebox(0,0)[lt]{\lineheight{1.25}\smash{\begin{tabular}[t]{l}$\textcolor{red}{+1}$\end{tabular}}}}%
    \put(0,0){\includegraphics[width=\unitlength,page=2]{s1dd-1.pdf}}%
    \put(0.66085589,0.73289662){\color[rgb]{0,0,0}\makebox(0,0)[lt]{\lineheight{1.25}\smash{\begin{tabular}[t]{l}$\textcolor{red}{-1}$\end{tabular}}}}%
    \put(0,0){\includegraphics[width=\unitlength,page=3]{s1dd-1.pdf}}%
    \put(-0.00124497,0.40897325){\color[rgb]{0,0,0}\makebox(0,0)[lt]{\lineheight{1.25}\smash{\begin{tabular}[t]{l}$\xxra{0}[(1)]{\textrm{expand band}}$\end{tabular}}}}%
    \put(0.0455024,0.18520606){\color[rgb]{0,0,0}\makebox(0,0)[lt]{\lineheight{1.25}\smash{\begin{tabular}[t]{l}$\xxra{0}[(2)]{\textrm{cancel duals}}$\end{tabular}}}}%
    \put(0.17089279,0.03454198){\color[rgb]{0,0,0}\makebox(0,0)[lt]{\lineheight{1.25}\smash{\begin{tabular}[t]{l}$\xxra{0}[(3)]{\textrm{expand band + isotopy}}$\end{tabular}}}}%
  \end{picture}%
\endgroup%
}
			\caption{
				Band moves for the family $ \mathcal{S}_{1d}^{-1} $.
			} \label{fig:S1dd-1}
		\end{figure}

	\afterpage{\clearpage}
		\begin{figure}[p]
			\centering
			\def\svgwidth{1.1\textwidth}
			\makebox[\textwidth][c]{
\begingroup%
  \makeatletter%
  \providecommand\color[2][]{%
    \errmessage{(Inkscape) Color is used for the text in Inkscape, but the package 'color.sty' is not loaded}%
    \renewcommand\color[2][]{}%
  }%
  \providecommand\transparent[1]{%
    \errmessage{(Inkscape) Transparency is used (non-zero) for the text in Inkscape, but the package 'transparent.sty' is not loaded}%
    \renewcommand\transparent[1]{}%
  }%
  \providecommand\rotatebox[2]{#2}%
  \newcommand*\fsize{\dimexpr\f@size pt\relax}%
  \newcommand*\lineheight[1]{\fontsize{\fsize}{#1\fsize}\selectfont}%
  \ifx\svgwidth\undefined%
    \setlength{\unitlength}{318.00376592bp}%
    \ifx\svgscale\undefined%
      \relax%
    \else%
      \setlength{\unitlength}{\unitlength * \real{\svgscale}}%
    \fi%
  \else%
    \setlength{\unitlength}{\svgwidth}%
  \fi%
  \global\let\svgwidth\undefined%
  \global\let\svgscale\undefined%
  \makeatother%
  \begin{picture}(1,0.93586923)%
    \lineheight{1}%
    \setlength\tabcolsep{0pt}%
    \put(0,0){\includegraphics[width=\unitlength,page=1]{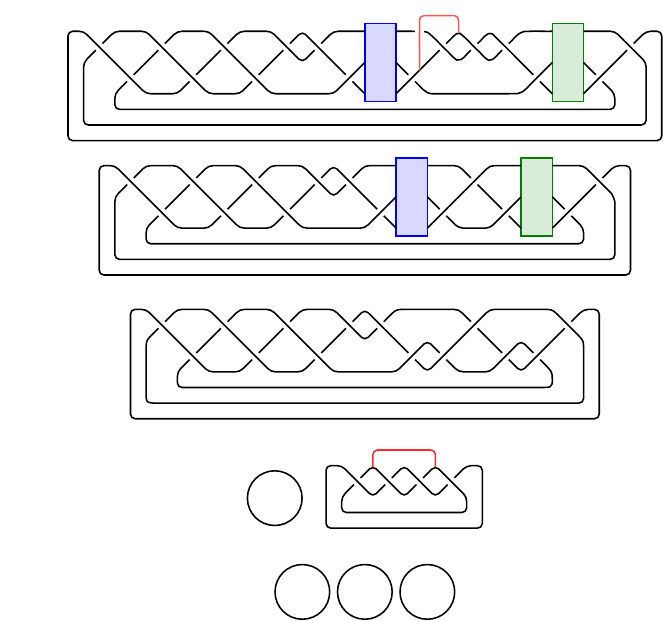}}%
    \put(0.64917974,0.92392337){\color[rgb]{0,0,0}\makebox(0,0)[lt]{\lineheight{1.25}\smash{\begin{tabular}[t]{l}$\textcolor{red}{-1}$\end{tabular}}}}%
    \put(0.58785965,0.26827069){\color[rgb]{0,0,0}\makebox(0,0)[lt]{\lineheight{1.25}\smash{\begin{tabular}[t]{l}$\textcolor{red}{-1}$\end{tabular}}}}%
    \put(-0.00130514,0.59263318){\color[rgb]{0,0,0}\makebox(0,0)[lt]{\lineheight{1.25}\smash{\begin{tabular}[t]{l}$\xxra{0}[(1)]{\textrm{expand band}}$\end{tabular}}}}%
    \put(0.04710623,0.37513068){\color[rgb]{0,0,0}\makebox(0,0)[lt]{\lineheight{1.25}\smash{\begin{tabular}[t]{l}$\xxra{0}[(2)]{\textrm{cancel duals}}$\end{tabular}}}}%
    \put(0.22160552,0.18123562){\color[rgb]{0,0,0}\makebox(0,0)[lt]{\lineheight{1.25}\smash{\begin{tabular}[t]{l}$\xxra{0}[(3)]{\textrm{isotopy}}$\end{tabular}}}}%
    \put(0.17443625,0.03501107){\color[rgb]{0,0,0}\makebox(0,0)[lt]{\lineheight{1.25}\smash{\begin{tabular}[t]{l}$\xxra{0}[(4)]{\textrm{expand band + isotopy}}$\end{tabular}}}}%
  \end{picture}%
\endgroup%
}
			\caption{
				Band moves for the family $ \mathcal{S}_{1d}^1 $.
			} \label{fig:S1dd+1}
		\end{figure}
		
        \begin{sidewaysfigure}
            \centering
            \def\svgwidth{0.95\textwidth}
            \makebox[\textwidth][c]{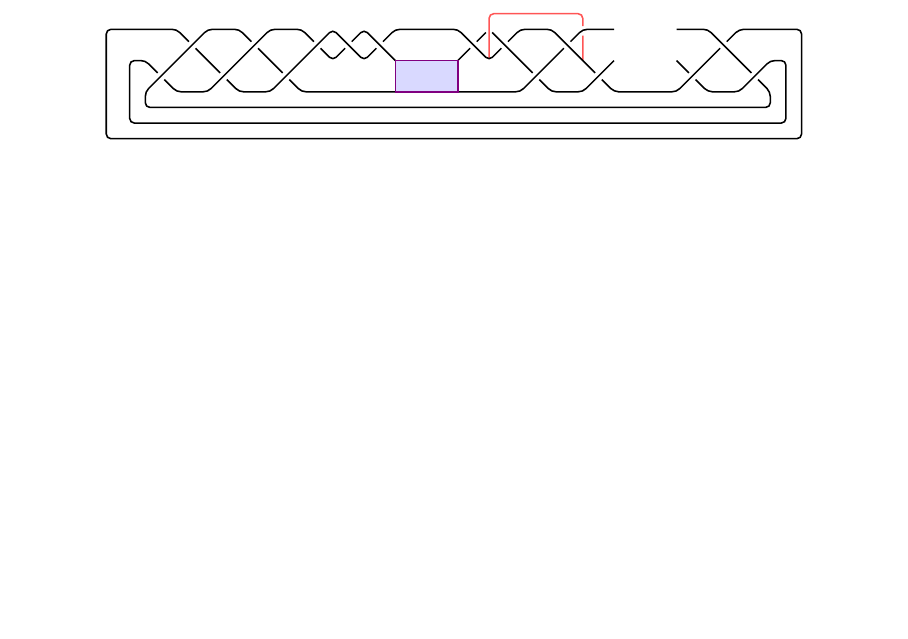}
            \captionsetup{width=.8\linewidth}
		  \vspace*{0.5em}
            \caption{
                Band moves for the family $ \mathcal{S}_{1e}^{-1} $. In step (2) we flype the tangle between the two purple rectangles $x$ times to cancel the crossings contained in the rectangles.
            } \label{fig:S1ed-1}
        \end{sidewaysfigure}

        \begin{sidewaysfigure}
            \centering
            \def\svgwidth{0.9\textwidth}
            \makebox[\textwidth][c]{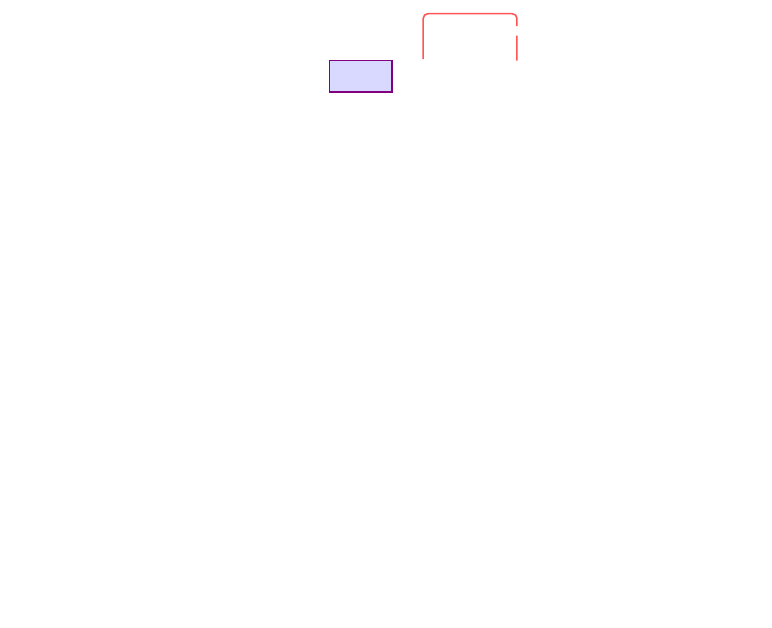}
            \captionsetup{width=.8\linewidth}
		  \vspace*{0.5em}
            \caption{
                Band moves for the family $ \mathcal{S}_{1e}^{1} $. In step (2) we flype the tangle between the two purple rectangles $x$ times to cancel the crossings contained in the rectangles.
            } \label{fig:S1ed+1}
        \end{sidewaysfigure}

\newpage

\begingroup
\renewcommand{\addcontentsline}[3]{}
\printbibliography

@incollection {foxproblems,
    AUTHOR = {Fox, R. H.},
     TITLE = {Some problems in knot theory},
 BOOKTITLE = {Topology of 3-manifolds and related topics ({P}roc. {T}he
              {U}niv. of {G}eorgia {I}nstitute, 1961)},
     PAGES = {168--176},
 PUBLISHER = {{Prentice-Hall, Inc., Englewood Cliffs, NJ}},
      YEAR = {1961},
   MRCLASS = {55.20},
  MRNUMBER = {140100},
MRREVIEWER = {L.\ Neuwirth}
}

@article {akbulutkirby2,
    AUTHOR = {Akbulut, Selman and Kirby, Robion},
     TITLE = {Mazur manifolds},
   JOURNAL = {Michigan Math. J.},
  FJOURNAL = {Michigan Mathematical Journal},
    VOLUME = {26},
      YEAR = {1979},
    NUMBER = {3},
     PAGES = {259--284},
      ISSN = {0026-2285,1945-2365},
   MRCLASS = {57M25 (57R65)},
  MRNUMBER = {544597},
MRREVIEWER = {Terry\ Lawson},
       URL = {http://projecteuclid.org/euclid.mmj/1029002261},
}

@incollection {milnor1975,
    AUTHOR = {Milnor, John},
     TITLE = {On the {$3$}-dimensional {B}rieskorn manifolds {$M(p,q,r)$}},
 BOOKTITLE = {Knots, groups, and {$3$}-manifolds ({P}apers dedicated to the
              memory of {R}. {H}. {F}ox)},
    SERIES = {Ann. of Math. Stud.},
    VOLUME = {No. 84},
     PAGES = {175--225},
 PUBLISHER = {Princeton Univ. Press, Princeton, NJ},
      YEAR = {1975},
   MRCLASS = {57D70 (14B05 32C40)},
  MRNUMBER = {418127},
MRREVIEWER = {W.\ D.\ Neumann},
}

@article{erle,
  title={Calculation of the signature of a 3-braid link},
  author={Erle, Dieter},
  journal={Kobe Journal of Mathematics},
  volume={16},
  number={2},
  pages={161--175},
  year={1999}
}

@article {os-absolutelygraded,
    AUTHOR = {Ozsv\'{a}th, Peter and Szab\'{o}, Zolt\'{a}n},
     TITLE = {Absolutely graded {F}loer homologies and intersection forms
              for four-manifolds with boundary},
  JOURNAL = {Advances in Mathematics},
    VOLUME = {173},
      YEAR = {2003},
    NUMBER = {2},
     PAGES = {179--261},
}

@article {ozsvathszaboplumbed,
    AUTHOR = {Ozsv\'{a}th, Peter and Szab\'{o}, Zolt\'{a}n},
     TITLE = {On the {F}loer homology of plumbed three-manifolds},
  JOURNAL = {Geometry \& Topology},
    VOLUME = {7},
      YEAR = {2003},
     PAGES = {185--224},
}

@article {ozsvathszabosharp,
    AUTHOR = {Ozsv\'{a}th, Peter and Szab\'{o}, Zolt\'{a}n},
     TITLE = {Knots with unknotting number one and {H}eegaard {F}loer
              homology},
   JOURNAL = {Topology},
    VOLUME = {44},
      YEAR = {2005},
    NUMBER = {4},
     PAGES = {705--745},
}

@misc{greene-owens,
    title={Alternating links, rational balls, and cube tilings},
    author={J. E. Greene and B. Owens},
    year={2022},
    eprint={2212.06248},
    archivePrefix={arXiv},
    primaryClass={math.GT}
}

@article{florens,
	author = {Florens, V.},
	date-added = {2022-01-10 01:23:42 +0200},
	date-modified = {2022-01-10 01:24:13 +0200},
	journal = {International Mathematics Research Notices},
	number = {2},
	pages = {55--67},
	title = {On the Fox--Milnor theorem for the Alexander polynomial of links},
	volume = {2004},
	year = {2004}}

@misc{klo,
	author = {F. Swenton},
	date-added = {2022-01-06 23:12:10 +0200},
	date-modified = {2022-01-06 23:14:25 +0200},
	title = {KLO (Knot-Like Objects) software},
	url = {KLO-Software.net},
	year = {2021},
	bdsk-url-1 = {KLO-Software.net}}

@article{baldwinquasi,
	author = {Baldwin, J.A.},
	date-added = {2020-12-31 01:14:57 +0200},
	date-modified = {2020-12-31 01:15:42 +0200},
	journal = {Journal of Topology},
	pages = {963--992},
	title = {Heegaard Floer homology and genus one, one‐boundary component open books},
	volume = {1},
	year = {2008}}

@article{heraldkirklivingston,
	author = {Herald, Christopher and Kirk, Paul and Livingston, Charles},
	date-added = {2020-12-30 21:09:58 +0200},
	date-modified = {2020-12-30 21:10:49 +0200},
	journal = {Mathematische Zeitschrift},
	pages = {925--949},
	title = {Metabelian representations, twisted Alexander polynomials, knot slicing, and mutation},
	volume = {265},
	year = {2010}}

@article{brejevs2020ribbon,
  title={On slice alternating 3-braid closures},
  author={Brejevs, V.},
  journal={Pacific Journal of Mathematics},
  volume={324},
  number={1},
  pages={49--70},
  year={2023}
}

@article{ozsz2003,
	author = {Ozsv{\'a}th, Peter and Szab{\'o}, Zolt{\'a}n},
	journal = {Geometry \& Topology},
	number = {2},
	pages = {615--639},
	title = {Knot Floer homology and the four-ball genus},
	volume = {7},
	year = {2003}}

@article{DonaldOwens2012,
	author = {Donald, Andrew and Owens, Brendan},
	date-modified = {2020-11-27 17:15:17 +0200},
	journal = {Algebraic \& Geometric Topology},
	number = {4},
	pages = {2069--2093},
	title = {Concordance groups of links},
	volume = {12},
	year = {2012},
	bdsk-url-1 = {http://dx.doi.org/10.2140/agt.2012.12.2069}}

@mastersthesis{sartori2010,
	author = {Sartori, A.},
	school = {University of Pisa},
	title = {Knot concordance in three-manifolds},
	year = {2010}}

@misc{ammmps2020branched,
	archiveprefix = {arXiv},
	author = {Paolo Aceto and Jeffrey Meier and Allison N. Miller and Maggie Miller and JungHwan Park and Andr{\'a}s I. Stipsicz},
	eprint = {2002.10324},
	primaryclass = {math.GT},
	title = {Branched covers bounding rational homology balls},
	year = {2020}}

@article{simone2020classification,
  title={Classification of torus bundles that bound rational homology circles},
  author={Simone, J.},
  journal={Algebraic \& Geometric Topology},
  volume={23},
  number={6},
  pages={2449--2518},
  year={2023}
}

@book{murasugi,
	author = {Murasugi, K.},
	place = {Providence, Rhode Island},
	publisher = {American Mathematical Society},
	title = {On closed 3-braids},
	year = {1974}}

@article{lisca-3braids,
	author = {Lisca, P.},
	journal = {Transactions of the American Mathematical Society},
	number = {7},
	pages = {5087--5112},
	title = {On 3--braid knots of finite concordance order},
	volume = {369},
	year = {2017}}

@book{gompfstipsicz,
	author = {Gompf, R. E. and Stipsicz, A. I.},
	place = {Providence, Rhode Island},
	publisher = {American Mathematical Society},
	title = {4-manifolds and Kirby calculus},
	year = {1999}}

@inproceedings{cassongordon,
	address = {Boston, Massachusetts},
	author = {A. J. Casson and C. McA. Gordon},
	booktitle = {\`{A} la recherche de la topologie perdue},
	pages = {181-199},
	publisher = {Birkh\"auser Boston},
	title = {Cobordism of classical knots},
	volume = {62},
	year = {1986}}

@article{lisca-rational,
	author = {P. Lisca},
	journal = {Geometry \& Topology},
	pages = {429--472},
	title = {Lens spaces, rational balls and the ribbon conjecture},
	volume = {11},
	year = 2007}

@article{greene-pretzel,
	author = {J. E. Greene and S. Jabuka},
	journal = {American Journal of Mathematics},
	number = 3,
	pages = {555-580},
	title = {The slice-ribbon conjecture for 3-stranded pretzel knots},
	volume = {133},
	year = 2011}

@article {parkrationalblowdown,
    AUTHOR = {Park, Jongil},
     TITLE = {Seiberg-{W}itten invariants of generalised rational
              blow-downs},
  JOURNAL = {Bulletin of the Australian Mathematical Society},
    VOLUME = {56},
      YEAR = {1997},
    NUMBER = {3},
     PAGES = {363--384}
}

@incollection{kirbylist,
	Booktitle = {Geometric topology ({A}thens, {GA}, 1993)},
	Date-Added = {2019-08-23 13:51:35 +0000},
	Date-Modified = {2019-08-23 13:51:41 +0000},
	Editor = {Kirby, Rob},
	Mrclass = {57-02},
	Mrnumber = {1470751},
	Pages = {35--473},
	Publisher = {Amer. Math. Soc., Providence, RI},
	Series = {AMS/IP Stud. Adv. Math.},
	Title = {Problems in low-dimensional topology},
	Volume = {2},
	Year = {1997}}

@incollection {fintushelstern,
    AUTHOR = {Fintushel, Ronald and Stern, Ronald J.},
     TITLE = {A {$\mu$}-invariant one homology {$3$}-sphere that bounds an
              orientable rational ball},
 BOOKTITLE = {Four-manifold theory ({D}urham, {N}.{H}., 1982)},
    SERIES = {Contemp. Math.},
    VOLUME = {35},
     PAGES = {265--268},
 PUBLISHER = {Amer. Math. Soc., Providence, RI},
      YEAR = {1984}
}

@article {fickle,
    AUTHOR = {Fickle, Henry Clay},
     TITLE = {Knots, {${\bf Z}$}-homology {$3$}-spheres and contractible
              {$4$}-manifolds},
  FJOURNAL = {Houston Journal of Mathematics},
    VOLUME = {10},
      YEAR = {1984},
    NUMBER = {4},
     PAGES = {467--493}
}

@article {akbulutkirby,
    AUTHOR = {Akbulut, Selman and Kirby, Robion},
     TITLE = {Branched covers of surfaces in {$4$}-manifolds},
  JOURNAL = {Mathematische Annalen},
    VOLUME = {252},
      YEAR = {1979/80},
    NUMBER = {2},
     PAGES = {111--131}
}

@article {akbulutlarson,
    AUTHOR = {Akbulut, Selman and Larson, Kyle},
     TITLE = {Brieskorn spheres bounding rational balls},
  JOURNAL = {Proceedings of the American Mathematical Society},
    VOLUME = {146},
      YEAR = {2018},
    NUMBER = {4},
     PAGES = {1817--1824}
}

@article {tweedy,
    AUTHOR = {Tweedy, Eamonn},
     TITLE = {Heegaard {F}loer homology and several families of {B}rieskorn
              spheres},
  JOURNAL = {Topology and its Applications},
    VOLUME = {160},
      YEAR = {2013},
    NUMBER = {4},
     PAGES = {620--632}
}

@article {furuta,
    AUTHOR = {Furuta, Mikio},
     TITLE = {Homology cobordism group of homology {$3$}-spheres},
  JOURNAL = {Inventiones Mathematicae},
    VOLUME = {100},
      YEAR = {1990},
    NUMBER = {2},
     PAGES = {339--355}
}

@book {saveliev,
    AUTHOR = {Saveliev, Nikolai},
     TITLE = {Invariants for homology {$3$}-spheres},
    SERIES = {Encyclopaedia of Mathematical Sciences},
    VOLUME = {140},
      NOTE = {Low-Dimensional Topology, I},
 PUBLISHER = {Springer-Verlag, Berlin},
      YEAR = {2002},
     PAGES = {xii+223}
}
\renewcommand{\section}[2]{}
\endgroup
\end{document}